\newtheorem{theorem}{Theorem}[section]
\newtheorem{lemma}[theorem]{Lemma}
\newtheorem{corollary}[theorem]{Corollary}
\newtheorem{remark}[theorem]{Remark}
\newtheorem{definition}{Definition}[section]
\def\R{{\mathbb R}}
\def\cL{{\mathcal L}}
\def\cM{{\mathcal M}}
\def\a{\alpha}
\def\b{\beta}
\def\e{\varepsilon}
\def\d{\delta}
\def\G{\Gamma}
\def\k{\kappa}
\def\l{\lambda}
\def\L{\Lambda}
\def\m{\mu}
\def\n{\nabla}
\def\p{\partial}
\def\r{\rho}
\def\s{\sigma}
\def\t{\tau}
\def\W{\Omega}
\def\1{\left(}
\def\2{\right)}
\def\3{\left\{}
\def\4{\right\}}
\def\8{\infty}
\def\sm{\setminus}
\def\ss{\subseteq}
\title{Regularity for solutions of non local, non symmetric equations}
\author[H. Chang Lara]{H\'ector Chang Lara}
\address{%
University of Texas at Austin\\
Department of Mathematics\\
1 University Station C1200\\
Austin, TX 78712
}
\email{hchang@math.utexas.edu}
\author[G. D\'avila]{Gonzalo D\'avila}
\address{%
University of Texas at Austin\\
Department of Mathematics\\
1 University Station C1200\\
Austin, TX 78712
}
\email{gdavila@math.utexas.edu}
\begin{document}

\begin{abstract}
We study the regularity for solutions of fully nonlinear integro differential equations with respect to nonsymmetric kernels. More precisely, we assume that our operator is elliptic with respect to a family of integro differential linear operators where the symmetric part of the kernels have a fixed homogeneity $\sigma$ and the skew symmetric part have strictly smaller homogeneity $\tau$. We prove a weak ABP estimate and $C^{1,\alpha}$ regularity. Our estimates remain uniform as we take $\sigma \to 2$ and $\tau \to 1$ so that this extends the regularity theory for elliptic differential equations with dependence on the gradient.
\end{abstract}
\maketitle


\section{Introduction}

We are interested in studying integro differential equations that arise when studying discontinuous stochastic processes. By the L\`evy-Khintchine formula, the generator of an $n$-dimensional L\`evy process is given by
\begin{align*} 
Lu(x)=&\sum_{i,j}a_{i,j}u_{i,j}+\sum_ib_iu_i\\
& +\int\limits_{\R^n}(u(x+y)-u(x)-\n u(x)\cdot y\chi_{B_1}(y))d\mu_x(y),
\end{align*}
where $\m$ is a positive measure such that $\int|y|^2/(|y|^2+1)d\mu(y)<\infty$. The first and second term corresponds to the diffusion and drift part, and the third one correspond to the jump. The effect of first term is already well understood as it regularizes the solution. The type of equations that we will study come from processes with only the jump part,
\begin{align}\label{LK}
Lu(x)=\int\limits_{\R^n}(u(x+y)-u(x)-\n u(x)\cdot y\chi_{B_1}(y))d\mu_x(y).
\end{align}
More general than the linear operator are the fully non linear ones, which are also important in stochastic control as seen in \cite{Soner}. For example, a convex type of equation takes the form,
\begin{align}\label{ecuacionsup}
Iu(x)=\sup\limits_{\a}L_\a u(x).
\end{align}
Equation \eqref{ecuacionsup} can be seen as a one player game, for which he can choose different strategies at each step to maximize the expected value of some function at the first exit point of the domain. A natural extension for \eqref{ecuacionsup}, when there are two players competing is 
\begin{align*}
Iu(x)=\inf_\b\sup_\a L_{\a\b} u (x).
\end{align*}

We are mainly interested in studying interior regularity for solutions of 
\begin{align}\label{ec1}
Iu(x)=f(x),  \ \ \hbox{in} \ \Omega,
\end{align}
for $f$ continuous, $\Omega$ a given domain and $I$ a fully non linear operator of fractional order to be defined in the next section. In \cite{S1} the regularity for this type of problem was already established by using analytic techniques. However those estimates blow up as the order of the equation goes to the classical one, so it was expected that better estimates could be possible. Those results are more elaborated and presented in \cite{C1}, \cite{C2} and \cite{C3} in the case that the kernels are symmetric. We remove this symmetry hypothesis of the kernel and are able to obtain $C^\a$ regularity and $C^{1,\a}$ regularity for translation invariant equations.

The paper is divided as follows. In Section \ref{VSP} we give most of the relevant definitions and point out some important examples to keep in mind. Specifically we will introduce the notions of fully non linear, non local operartors, ellipticity and viscosity solution. In Section \ref{SR} we state the main results of this work, which is $C^{\a}$ and $C^{1,\a}$ regularity for solutions of equations of the form \eqref{ec1} under different hypothesis on the kernels. In section \ref{CPE} we study the basic stability properties of the elliptic integro differential operators, the comparison principle and prove existence of solution of the Dirichlet problem by using Perron's method. Sections \ref{ABPp} and \ref{PE} are the core of this paper. In section \ref{ABPp} we prove a weak ABP estimate which combined with a rescaling argument will allow us to prove, in Section \ref{PE}, a point estimate lemma. On section \ref{Ho} we deal with the H\"older regularity by applying the previous point estimate to show a geometric decay of the oscillation of the solution. Finally in section \ref{CoA} we use that, for translation invariant equations, the incremental quotients are also solutions of equations in the same ellipticity class in order to show H\"older regularity for the first derivatives.
 

\section{Preliminaries and Viscosity Solutions}\label{VSP}

In this work we restrict ourselves to measures $d\m_x = K(x,y)dy$. From equation \eqref{LK} we formally can write
\begin{align}\label{dc}
Lu(x)=\int\d_e(u,x;y)K_e(x;y)dy+\int\d_o(u,x;y)K_o(x;y)dy+b(x)\cdot \n u(x),
\end{align}
where 
\begin{align*}
\d_e(u,x;y)&=u(x+y)+u(x-y)-2u(x),\\
\d_o(u,x;y)&=u(x+y)-u(x-y).
\end{align*}
$K_{e,o}$ are the even and odd part of $K$ with respect to $y$ and $b$ is a vector valued function given by
\[
 b(x)=\int\limits_{B_1}K_o(x;y)ydy.
\]
Notice that if the total kernel $K$ is even the last two terms in \eqref{dc} disappear. This was convenient in \cite{C1} as these bring additional difficulties with the scaling as can be noticed in \cite{KAL1}. 

The second term can be considered as a drift term, in the sense that has a ``direction'', by $K_o$ being odd. If the singularity of $K_o$ at the origin is of order $n+\t$, with $\t\to 1^-$, then this integral becomes a gradient term. For this reason, one can consider studying the regularizing effect of the first two terms. The linear operators we are interested are always of the form,
\begin{align}
\label{eqgeneral} Lu(x) &= P.V.\int\limits_{\R^n}(u(x+y)-u(x))K(x;y)dy,\\
 &:= \lim_{\e\to0} \int_{\R^n\sm B_\e}(u(x+y)-u(x))K(x;y)dy. \nonumber
\end{align}


\subsection{Integrability conditions.}

We want here to make sense of the decomposition \eqref{dc}. All we need for that is that the kernels are not too singular whenever $u \in C^{1,1}(x_0)$. The following definition is the same as in \cite{C1}.

\begin{definition}
We say that a function $u$ is $C^{1,1}$ at the point $x_0$ and write $u \in C^{1,1}(x_0)$ if and only if there exists a vector $v \in \R^n$ and a number $M>0$ such that
\[
|u(x+y)-u(x)-v\cdot y| < M|y|^2 \text{ for $|y|$ small enough.}
\]
\end{definition}

This implies in particular that $|\d_e(u,x_0;y)| = O(|y|^2)$ and $|\d_o(u,x_0;y)| = O(|y|)$ as $|y|$ goes to zero.

With this notion at hand we ask for the kernel $K$, when decomposed in its symmetric and skew symmetric parts, $K=K_e+K_o$ respectively, to satisfy the following integrability conditions,
\begin{align}
\label{intconde}\int\frac{|y|^2}{|y|^2+1}|K_e(y)|dy<\infty,\\
\label{intcondo}\int\frac{|y|}{|y|+1}|K_o(y)|dy<\infty.
\end{align}
These conditions allow us to write rigorously
\[
Lu(x) = \int\d_e(u,x;y)K_e(y)dy+\int\d_o(u,x;y)K_o(y)dy,
\]
for $u \in C^{1,1}(x)\cap L^\8(\R^n)$.

We say that a family $\cL$ of linear operators satisfy the integrability conditions uniformly when the upper bounds in \eqref{intconde} and \eqref{intcondo} can be taken independent of $L \in \cL$.


\subsection{Non linear, non local operators.}

Before defining what will be for us a fully non linear non local operator we present some examples to keep in mind. They are constructed from the linear operators in \eqref{eqgeneral}. 
\begin{align}
\label{eqinfsup}	\text{(Inf-sup type) } Iu(x) &= \inf_\b\sup_\a L_{\a,\b} u(x),\\
\label{moperator1} 	\text{(Maximal) } \cM^{+}_{\cL}u(x)&=\sup\limits_{L\in\cL}Lu(x),\\
\label{moperator2} 	\text{(Minimal) } \cM^{-}_{\cL}u(x)&=\inf\limits_{L\in\cL}Lu(x).
\end{align}

\begin{definition}\label{nonlocalop}
We say that $I$ is a non local fully non linear operator if it satisfies the following:
\begin{enumerate}
\item[(i)] If $u$ is any bounded $C^{1,1}(x)$ function then $Iu(x)$ is well defined.
\item[(ii)] If $u\in C^{1,1}(\W)$ for some open set $\W \ss \R^n$, then $Iu$ is a continuous function in $\W$.
\end{enumerate}
\end{definition}

Our examples satisfy immediately (i) in the definition above. In order to have the continuity stated in (ii) we need to check a uniform integrability condition in the kernels.

\begin{lemma}
Let $I$ be of the form \eqref{eqinfsup} where $\cL = \{L_{\a,\b}\}$ satisfy the integrability conditions \eqref{intconde} and \eqref{intcondo} uniformly. Then $Iu \in C(\W)$ for every $u \in C^{1,1}(\W)$.
\end{lemma}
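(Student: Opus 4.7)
The plan is to reduce the continuity of $Iu$ to the equicontinuity of the family $\{L_{\a,\b} u\}_{\a,\b}$. Because $|\inf_\b\sup_\a f_{\a,\b}(x) - \inf_\b\sup_\a f_{\a,\b}(x')| \le \sup_{\a,\b}|f_{\a,\b}(x) - f_{\a,\b}(x')|$, it is enough to show that, fixing $x_0 \in \W$, for every $\e>0$ there exists $\d>0$ with
\[
 \sup_{\a,\b} |L_{\a,\b}u(x) - L_{\a,\b}u(x_0)| < \e \quad \text{whenever } |x-x_0|<\d.
\]

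To do this I would split the domain of integration into three pieces $B_r$, $\{r \le |y| \le R\}$ and $\R^n\sm B_R$, with $0<r<R$ to be chosen, and estimate each piece uniformly in $(\a,\b)$. On $B_r$, by the $C^{1,1}$ hypothesis there exist a neighbourhood $U$ of $x_0$ and a constant $M$ such that $|\d_e(u,x;y)|\le M|y|^2$ and $|\d_o(u,x;y)|\le M|y|$ for $x\in U$ and small $|y|$; the near-origin contribution is then bounded by
\[
 M\int_{B_r}|y|^2|K_e(y)|\,dy + M\int_{B_r}|y||K_o(y)|\,dy,
\]
which, by the uniform integrability conditions \eqref{intconde}–\eqref{intcondo}, goes to $0$ as $r\to 0$ uniformly in $(\a,\b)$. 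On the tail $\R^n\sm B_R$ one uses $|\d_e|\le 4\|u\|_\8$ and $|\d_o|\le 2\|u\|_\8$ together with
\[
 \int_{|y|>R}|K_e(y)|\,dy \le \tfrac{R^2+1}{R^2}\int_{|y|>R}\tfrac{|y|^2}{|y|^2+1}|K_e(y)|\,dy,
\]
and analogously for $K_o$; the right-hand sides are tails of convergent integrals bounded uniformly in $(\a,\b)$, so they tend to $0$ as $R\to\8$ uniformly in $(\a,\b)$. Given $\e$, I would pick $r$ small and $R$ large so that the first and third pieces are each $<\e/3$ for every $(\a,\b)$.

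For the middle annulus, the key observation is that the total kernel mass there is finite and uniform: for any $r,R$,
\[
 \int_{r\le|y|\le R}|K_e(y)|\,dy \le \tfrac{r^2+1}{r^2}\,C_e, \qquad \int_{r\le|y|\le R}|K_o(y)|\,dy \le \tfrac{r+1}{r}\,C_o,
\]
with $C_e,C_o$ the uniform bounds in \eqref{intconde}–\eqref{intcondo}. Since $u$ is continuous and bounded, it is uniformly continuous on any bounded set; thus for $|x-x_0|$ small the differences $\d_e(u,x;y)-\d_e(u,x_0;y)$ and $\d_o(u,x;y)-\d_o(u,x_0;y)$ are uniformly small for $y$ in the fixed compact annulus, and multiplying by the above uniform mass bound gives a contribution $<\e/3$ uniformly in $(\a,\b)$, provided $|x-x_0|$ is small enough.

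Combining the three estimates yields the required equicontinuity and hence $Iu\in C(\W)$. The one point that requires care—and which I would flag as the main technical issue—is the uniformity of the middle-annulus estimate in $(\a,\b)$: one must use the bound on $\int|K_e|$ and $\int|K_o|$ over the \emph{fixed} annulus (obtained from the weighted uniform integrability) rather than any pointwise bound on $K_e,K_o$, and then exploit uniform continuity of $u$ on a sufficiently large bounded set containing $x_0+B_R$ and its translates by small vectors.
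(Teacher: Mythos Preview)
Your overall strategy---reduce to equicontinuity of $\{L_{\a,\b}u\}$ and split the domain of integration---is exactly what the paper does. The paper uses a two-piece split ($B_r$ and $\R^n\sm B_r$), handling the near part via the $C^{1,1}$ bound and ``absolute continuity of the integral'' and the far part by citing Lemma~4.1 of \cite{C1}. Your three-piece refinement is a reasonable way to make the far part explicit.

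However, your tail step contains a genuine gap. From the hypothesis you only know that $\sup_{\a,\b}\int_{\R^n}\frac{|y|^2}{|y|^2+1}|(K_{\a,\b})_e(y)|\,dy\le C$; this does \emph{not} imply that the tails $\int_{|y|>R}\frac{|y|^2}{|y|^2+1}|(K_{\a,\b})_e(y)|\,dy$ tend to zero uniformly in $(\a,\b)$ as $R\to\infty$. (Take kernels whose mass is concentrated on annuli $\{n\le|y|\le n+1\}$ with total weighted integral equal to $1$; for each fixed $R$ the tail equals $1$ once $n>R$.) The same objection applies to your near-origin step: a uniform bound on the total weighted integral does not by itself give $\int_{B_r}|y|^2|(K_{\a,\b})_e|\,dy\to 0$ uniformly as $r\to 0$. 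What is really needed is uniform absolute continuity of the measures $\frac{|y|^2}{|y|^2+1}|(K_{\a,\b})_e|\,dy$ (and similarly for the odd parts), which follows, for instance, whenever there is a single integrable majorant---as is the case for the class $\cL_0$ used throughout the paper, where $|K_e|\le(2-\s)\L|y|^{-n-\s}$ and $|K_o|\le(1-\t)b|y|^{-n-\t}$. You should either add this dominated-family hypothesis explicitly, or replace the tail/near-origin smallness claims by an argument that actually uses such a majorant; with that in place your three-piece proof goes through and is equivalent to the paper's.
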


\begin{proof}
We need to prove that $L_{\a,\b}u$ are equicontinuous over compact sets of $\W$ in order to conclude by Arzela Ascoli's Theorem. Fix $\d>0$ and lets work over the points $x\in\W$ that are at least $\d$ away from $\R^n\sm\W$.

Let $L_{\a,\b}$ has associated the kernels $K_{\a,\b}(y) = (K_{\a,\b})_e(y) + (K_{\a,\b})_o(y)$, decomposed in its symmetric and skew symmetric parts. Because $u \in C^{1,1}(\W)$ we can write for $x \in \W$, 
\begin{align*}
L_{\a,\b}u(x) &= \int \d_e(u,x;y)(K_{\a,\b})_e(y)dy + \int \d_o(u,x;y)(K_{\a,\b})_o(y)dy,\\
&= \int_{B_r} \d_e(u,x;y) (K_{\a,\b})_e(y)dy + \int_{\R^n\sm B_r} \d_e(u,x;y) (K_{\a,\b})_e(y)dy,\\
&{} + \int_{B_r} \d_o(u,x;y) (K_{\a,\b})_o(y)dy + \int_{\R^n\sm B_r} \d_o(u,x;y) (K_{\a,\b})_o(y)dy.
\end{align*}

The first and third integrals can be smaller than any $\e>0$ if $r$ is small enough. Use that $u$ is $C^{1,1}$ to get that $|\d_e(u,x;y)| \leq C|y|^2$ and $|\d_o(u,x;y)| \leq C|y|$ if $r<\d$ and for some constant $C$ independent of $x$. By the integrability condition and the absolute continuity of the integral we get that, for even smaller radius $r$, the aforementioned terms are smaller than $\e$, independently of $x$ and $L_{\a,\b}$.

Now if we fix a radius $r$, we get that the second and fourth terms are equicontinuous in $x$. For this we just need to apply Lemma 4.1 in \cite{C1}.

As a consequence of the previous two paragraphs, we obtain that the difference $|L_{\a,\b}u(x)-L_{\a,\b}u(x')|$ is arbitrarily small when $|x-x'|$ is sufficiently small, independently of $x$,$x'$ (both at least $\d$ away from $\R^n \sm \W$) and $L_{\a,\b}$.
\end{proof}


\subsection{Extremal operators comparable to the fractional laplacians.}

An important family, that will be used for the study of regularity, is given by $\cL_0 = \cL_0(\s,\t,\l,\L,b)$ with all the linear operators $L$ such that the kernels $K_{e,o}$ are comparable to those the $\s$ fractional Laplacian and some derivation of order $\t$.
\begin{align}
\label{kernel1} (2-\s)\frac{\l}{|y|^{n+\s}}\leq K_e\leq (2-\s)\frac{\L}{|y|^{n+\s}},\\
\label{kernel2} |K_o|\leq (1-\t)\frac{b}{|y|^{n+\t}}.
\end{align}
In order to satisfy the integrability conditions all we need is $\s\in(0,2)$ and $\t\in(0,1)$.

In this family the operators \eqref{moperator1}, \eqref{moperator2} take the explicit form
\begin{align}
\cM^+_{\cL_0}v(x)=\cM^+_\s v(x)+b(1-\t)\int\limits_{\R^n}\frac{|\d_o(v,x,y)|}{|y|^{n+\t}},\\
\cM^-_{\cL_0}v(x)=\cM^-_\s v(x)-b(1-\t)\int\limits_{\R^n}\frac{|\d_o(v,x,y)|}{|y|^{n+\t}},
\end{align}
where $\cM^{\pm}_\s$ are the extremal operators found in \cite{C1}, i.e. 
\begin{align*}
\cM^+_\s v(x)=(2-\s)\int\limits_{\R^n}\frac{\L\d_e^+(v,x;y)-\l\d_e^-(v,x;y)}{|y|^{n+\s}},\\
\cM^-_\s v(x)=(2-\s)\int\limits_{\R^n}\frac{\l\d_e^+(v,x;y)-\L\d_e^-(v,x;y)}{|y|^{n+\s}}.
\end{align*}
$\d^\pm_{e,o}$ denote the positive and negative parts of $\d_{e,o}$ respectively, ($\d_{e,o} = \d_{e,o}^+ - \d_{e,o}^-$).

For ease of notation we also introduce what we call the maximal $\t$ derivative $|D_\t|$, given by
\[
|D_\t|v(x)=(1-\t)\int\limits_{\R^n}\frac{|\d_o(v,x;y)|}{|y|^{n+\t}}dy,
\]
so that we can rewrite the operators as
\[
\cM^{\pm}_{\cL_0}v(x)=\cM^{\pm}_\s v(x)\pm b|D_\t|v(x).
\]

The factors $(2-\s)$ and $(1-\t)$ become important as $\s \to 2$, and $\t \to 1$, as they will allow us to recover second order differential equations with gradient terms as limits of integro differential equations.

Notice that this family admits kernels that could be positive and negative. The natural assumption, due the positivity of the measure in the L\`evy-Khintchine formula, is to consider operators which are elliptic with respect to a family $\cL$ with non negative kernels. Because of this reason we consider also the family $\tilde{\cL}_0\ss\cL_0$, given by all possible operators $L$ with total kernel $K=K_e+K_o\geq0$ satisfying the conditions \eqref{kernel1} and \eqref{kernel2}. We point out that given $v$ smooth, we have the following natural inequalities,
\[
\cM^+_{\cL_0}v(x)\geq\cM^+_{\tilde{\cL}_0}v(x)\geq\cM^-_{\tilde{\cL}_0}v(x)\geq\cM^-_{\cL_0}v(x).
\]
This control will be useful, since we have explicit formulas for the maximal operators in the larger class $\cL_0$.

\subsection{Ellipticity.}

The reason why we introduce extremal operators is because they are the ones that control \emph{elliptic} non linear operators. Here is the definition of ellipticity for a general family $\cL$ of linear operators.

\begin{definition}\label{ellipticoperator}
Let $\cL$ be a class of linear integro differential operators satisfying \eqref{intconde} and \eqref{intcondo}. We say that a fully non linear operator $I$ is elliptic with respect to the class $\cL$ if
\begin{align}\label{ellipticity}
\cM^-_{\cL}(u-v)(x) \leq Iu(x)-Iv(x)\leq \cM^+_{\cL}(u-v)(x).
\end{align}
\end{definition}


\subsection{Scaling.}

A tool we will be using frequently is the scaling. Consider a smooth bounded function $u$ and a operator $I$, elliptic with respect to $\cL \ss \cL_0(\s,\t,\l,\L,b)$, such that
\[
Iu = f \text{ in $\W$}.
\]
If we rescale $u$ by $u_{\a,\b}(x) = \a u(\b x)$ then the equation gets rescaled in the following way,
\[
I_{\a,\b}u_{\a,\b} = f_{\a,\b} \text{ in $\b^{-1}\W$}, 
\]
where
\begin{align*}
(I_{\a,\b}v)(x) &= \a I(\a^{-1}v(\b^{-1}\cdot))(\b x),\\
f_{\a,\b}(x) &= \a f(\b x).
\end{align*}

In particular, if $I=L$ is linear with kernel $K$ then the kernel $K_{\a,\b}$ for $L_{\a,\b}$ gets transformed according to the change of variables formula,
\[
K_{\a,\b}(x,y) = \b^n K(\b x,\b y).
\]

The extremal operators $\cM^\pm_\s$ and $|D_\t|$ scale with order $\s$ and $\t$ respectively, because by the change of variables formula,
\begin{align*}
\cM^\pm_\s u_{\a,\b}(x) &= \a\b^{-\s}(\cM^\pm_\s u)(\b x),\\
|D_\t|u_{\a,\b}(x) &= \a\b^{-\t}(|D_\t|u)(\b x).
\end{align*}

This implies that, going back to $I$ non linear, the operator $I_{\a,\b}$ belongs to some rescaled family of linear operator $\cL_{\a,\b} \ss \cL_0(\s,\t,\b^{-\s}\l,\b^{-\s}\L,\b^{-\t}b)$.

At many points we will use that when $\s>\t$ and $\b$ is small then the rescaled equation is dominated by the derivatives of order $\s$.


\subsection{Viscosity solutions.}

Viscosity solutions provide the right framework to study fully non linear equations, as seen in the local case in \cite{CC}, and also in the non local case in \cite{BI}.

\begin{definition}\label{viscosity}
A bounded function $u:\R^n\to\R$, upper (lower) semicontinuous in $\bar\Omega$, is said to be a sub solution (super solution) to $Iu=f$, and we write $Iu\geq f$ ($Iu\leq f$), if every time $\varphi$ is a second order polynomial touching $u$ by above (below) at $x$ in a neighborhood $N$, i.e.
\begin{itemize}
\item[(i)] $\varphi(x)=u(x)$,
\item[(ii)] $\varphi(y)>u(y)$ ($\varphi(y)<u(y)$) for every $x\in N\sm\{x\}$,
\end{itemize}
then $Iv(x)\geq f(x)$ ($Iv(x)\leq f(x)$), for $v$ defined as
\begin{align*}
v=\left\{\begin{array}{ll} 
\varphi& \hbox{in} \ N,\\
u&\hbox{in} \ \R^n\sm N.
\end{array}\right. 
\end{align*}
\end{definition}

Later on section \ref{CPE} we will see that in many cases this definition is equivalent to one which includes many more test functions.


\section{Statement of Results}\label{SR}

In this section we state the main results obtained in this paper. An important tool used to prove the following theorems is a point estimate, also known as $L^\e$ Lemma. This comes from a partial ABP inequality similar to the one in \cite{C1} and a scaling argument which decreases the effect of the lower order term.

In order to prove our regularity results we will need to impose some assumptions on $\s$ and $\t$. Given $\s_0,\t_0,m,A_0>0$, considered as universal constants, we will assume that the following holds.
\begin{enumerate}
\item[(H1)] $2>\s\geq\s_0>0$, $\min(1,\s)>\t\geq\t_0>0$,
\item[(H2)] $\s-\t \geq m > 0$,
\item[(H3)] $\l A_0(2-\s) \geq b(1-\t)$.
\end{enumerate}

\begin{theorem}\label{ca}
Let $\s_0,\t_0,m,A_0>0$ and assume that H1, H2 and H3 hold. Let $u$ be a bounded function in $\R^n$ such that in $B_1$,
\[
\cM^+_{\tilde{\cL}_0}u \geq -C_0 \ \ \text{and} \ \ \cM^-_{\tilde{\cL}_0}u \leq C_0,
\]
in the viscosity sense. Then there exists a universal exponent $\a>0$ such that $u \in C^\a(B_{1/2})$ and 
\[
\|u\|_{C^\a(B_{1/2})}\leq C(\|u\|_\8 + C_{0})
\]
for some universal constant $C>0$.
\end{theorem}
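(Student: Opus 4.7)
The plan is to establish a single-scale diminish-of-oscillation estimate for $u$ and iterate it at dyadic scales, turning the geometric decay of $\operatorname{osc}_{B_{\rho^k}}(u)$ into a $C^\a$ bound in the standard way. The engine for the single step is the point estimate promised in Section \ref{PE}, which in turn rests on the weak ABP inequality of Section \ref{ABPp}; hypotheses H2 and H3 are what keep that point estimate uniform in the rescaled problems encountered along the iteration. After an initial normalization (divide by $\|u\|_\infty + C_0$ and subtract a constant) I may assume $\|u\|_\infty \leq 1/2$ and $C_0 \leq \varepsilon_0$ for a small universal $\varepsilon_0$ to be chosen.

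For the single step, set $M=\sup_{B_{1/2}}u$ and $m=\inf_{B_{1/2}}u$; using that the hypothesis is symmetric under $u\mapsto -u$, assume without loss of generality that $|\{u\leq (M+m)/2\}\cap B_{1/2}|\geq \tfrac12|B_{1/2}|$. I would then consider $w:=2(M-u)/(M-m)$, which is nonnegative in $B_{1/2}$ and of order one there, extend it by its positive part outside $B_{1/2}$, and absorb the truncation error into the right-hand side using the integrability of the kernels and the bound on $\|u\|_\infty$. This produces $\cM^-_{\tilde{\cL}_0}w\leq C$ in $B_{1/2}$ for a universal $C$, together with the density bound $|\{w\geq 1\}\cap B_{1/2}|\geq \tfrac12|B_{1/2}|$. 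Applying the point estimate of Section \ref{PE} to $w$ then yields $\inf_{B_{1/4}} w\geq \theta_0$ for some universal $\theta_0\in(0,1)$, which translates into $\operatorname{osc}_{B_{1/4}} u \leq (1-\tfrac12\theta_0)\operatorname{osc}_{B_{1/2}}u + (\text{controlled tail})$.

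To iterate, I would set $u_k(x)=(u(\rho^k x)-c_k)/A_k$, with $\rho\in(0,1)$ a fixed universal contraction and $c_k,A_k$ chosen so that $\|u_k\|_{L^\infty(B_1)}\leq 1$. By the scaling formulas of Section \ref{VSP}, $u_k$ is a sub/supersolution of the extremal operators of the rescaled class $\tilde{\cL}_0(\s,\t,\l,\L,b_k)$ with $b_k=b\,\rho^{k(\s-\t)}$ and right-hand side $\rho^{k\s}C_0/A_k$. Hypothesis H2 guarantees $b_k\leq b$, so H3 passes to every scale (in fact with an increasing margin), the class is preserved, and the single-step estimate applies to each $u_k$ with uniform constants. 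Iterating yields $\operatorname{osc}_{B_{\rho^k}}u\leq C(1-\tfrac12\theta_0)^k(\|u\|_\infty+C_0)$, which is the announced $C^\a$ estimate with $\a=-\log(1-\tfrac12\theta_0)/\log(1/\rho)$.

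The main obstacle is the bookkeeping for the nonlocal tails across iterations: at step $k$ the values of $u_k$ outside $B_1$ originate from $u$ on the much larger ball $B_{\rho^{-k}}$, and they enter the operator through the integral against $K_e$ and $K_o$. These contributions must be absorbed into the right-hand side of the rescaled equation without destroying the smallness required by the point estimate. Quantitatively, the tail at scale $\rho^k$ is bounded by a geometric series $\sum_{j<k}\operatorname{osc}_{B_{\rho^j}}u\cdot \rho^{(k-j)\s}$, plus an analogous series of order $\t$ for the odd part weighted by the shrinking coefficient $b_k$; both converge thanks to the inductive decay of $\operatorname{osc}_{B_{\rho^j}}u$. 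Hypothesis H2 is precisely what guarantees that the odd-order series does not overwhelm the even-order one at any scale, and H3 plays the same role at $k=0$.
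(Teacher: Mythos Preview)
Your proposal is correct and follows essentially the same route as the paper: normalize, establish a single-scale oscillation decay via the point estimate of Section~\ref{PE}, iterate on dyadic balls, and use H2--H3 to keep the odd-part and tail contributions small uniformly along the iteration. The paper's presentation differs only in packaging---it first performs a universal dilation by $\kappa=\e_1/(1+\|u\|_\infty)^{1/(\s-\t)}$ and then builds sequences $m_k\le\tilde u\le M_k$ in $B_{4^{-k}}$ directly by induction, checking at each step that $4^{-k(\s-\t)}\kappa^{\s-\t}b\,(1+\|w\|_\infty)\lesssim 4^{-k(m-\a)}\e_0^m$ stays small provided $\a\le m$, which is exactly the tail bookkeeping you flag as the main obstacle.
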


An immediate corollary is the following.

\begin{corollary}\label{cacor}
Let $\s_0,\t_0,m,A_0>0$ and assume that H1, H2 and H3 hold. Let $I$ be an elliptic operator of the inf-sup type as in \eqref{eqinfsup} with all the linear operators in $\tilde{\cL}_0$ and let $f \in C(\bar B_1)$. Let $u$ be a bounded function in $\R^n$ such that in $B_1$,
\[
Iu = f,
\]
in the viscosity sense. Then there exists a universal exponent $\a>0$ such that $u \in C^\a(B_{1/2})$ and 
\[
\|u\|_{C^\a(B_{1/2})}\leq C(\|u\|_\8 + \|f\|_{\8})
\]
for some universal constant $C>0$.
\end{corollary}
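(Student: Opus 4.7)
The plan is to reduce this corollary to Theorem \ref{ca} by showing that $u$ satisfies the two extremal inequalities from that theorem with $C_0 = \|f\|_\infty$. The reduction proceeds in three steps: (i) compute $I(0)$, (ii) apply ellipticity to sandwich $Iu - I(0)$ between the extremal operators in $\tilde{\cL}_0$, and (iii) verify that these inequalities survive in the viscosity sense.

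First, $I(0) = 0$: for any linear $L$ of the form \eqref{eqgeneral}, $L$ applied to the zero function vanishes, and therefore $I(0)=\inf_\b\sup_\a L_{\a,\b}(0)=0$. Second, denote by $\cL^* := \{L_{\a,\b}\}\ss\tilde{\cL}_0$ the subfamily appearing in the inf-sup representation of $I$. The ellipticity of $I$ with respect to $\cL^*$, applied with $v\equiv 0$, yields (pointwise, whenever $u\in C^{1,1}(x)$)
\[
\cM^-_{\cL^*}(u)(x)\ \leq\ Iu(x)-I(0)\ =\ f(x)\ \leq\ \cM^+_{\cL^*}(u)(x).
\]
Since $\cL^*\ss\tilde{\cL}_0$, the elementary monotonicity of the extremal operators with respect to the class gives $\cM^-_{\tilde{\cL}_0}(u)\leq\cM^-_{\cL^*}(u)$ and $\cM^+_{\cL^*}(u)\leq\cM^+_{\tilde{\cL}_0}(u)$, so
\[
\cM^-_{\tilde{\cL}_0}(u)(x)\ \leq\ f(x)\ \leq\ \cM^+_{\tilde{\cL}_0}(u)(x).
\]

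Third, I need these inequalities in the viscosity sense. Suppose $\varphi$ is a second order polynomial touching $u$ from above at $x_0$ in a neighborhood $N$, and let $v$ equal $\varphi$ on $N$ and $u$ elsewhere. Since $Iu\geq f$ in the viscosity sense, $Iv(x_0)\geq f(x_0)$. Moreover $v\in C^{1,1}(x_0)\cap L^\8(\R^n)$, so the pointwise ellipticity chain above applies to $v$ and gives $\cM^+_{\tilde{\cL}_0}(v)(x_0)\geq Iv(x_0)-I(0)\geq f(x_0)\geq -\|f\|_\8$. Hence $u$ is a viscosity subsolution of $\cM^+_{\tilde{\cL}_0} u \geq -\|f\|_\8$ in $B_1$. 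The symmetric argument, testing from below, shows $\cM^-_{\tilde{\cL}_0}u\leq \|f\|_\8$ in the viscosity sense.

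Finally, $u$ satisfies precisely the hypotheses of Theorem \ref{ca} with $C_0=\|f\|_\8$, and the announced bound $\|u\|_{C^\a(B_{1/2})}\leq C(\|u\|_\8+\|f\|_\8)$ follows directly. There is no real obstacle; the only mildly delicate point is step (iii), the transfer of ellipticity to the viscosity framework, which works precisely because Definition \ref{viscosity} formulates viscosity inequalities through the replacement function $v$ rather than through the test polynomial $\varphi$ itself, so that $v$ is admissible in the pointwise ellipticity inequality.
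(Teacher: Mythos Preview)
Your proposal is correct and matches the paper's intended (unwritten) argument: the paper states this as ``an immediate corollary'' of Theorem~\ref{ca} without further detail, and the reduction you carry out---using $I(0)=0$ and the ellipticity inequality \eqref{ellipticity} with $v\equiv 0$ to obtain $\cM^-_{\tilde\cL_0}u\leq\|f\|_\8$ and $\cM^+_{\tilde\cL_0}u\geq-\|f\|_\8$---is exactly the intended mechanism. Your care in step~(iii), verifying that the ellipticity inequality transfers to the viscosity framework via the replacement function $v$ of Definition~\ref{viscosity}, is appropriate and fills in the only point that is not entirely automatic.
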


Coming back to Theorem \ref{ca}, we would like to point out that our bounds remain uniform as $\s\to 2$ and $\t\to 1$, which allows us to recover H\"older regularity for equations with bounded measurable coefficients including gradient terms. For fixed $\s$ and $\t$ these results were proven in \cite{S1} and \cite{BK} by using analytic techniques. These estimates are not uniform in $\s$ and blow up as the order goes to the classical one.

The order $\a$ of our H\"older estimates deteriorates as $\t\to\s$. In this critical case $\s=\t$, both terms in the equation are of the same order and rescaling the equation doesn't have any effect on the $\t$ derivative, hence our argument doesn't work. It is known from the previous work in \cite{S1} and \cite{BK} that the same result holds even when $\s=\t$. By combining both results, we can get regularity uniformly in $\s$ and $\t$, disregarding the separation between $\s$ and $\t$ (hypothesis H2).

To get higher regularity we will need to add an extra assumption to the kernels, which is a modulus of continuity of $K_e$ and $K_o$ in measure. More precisely, given $\r_0>0$, we define the class $\cL_1 = \cL_1(\s,\t,\l,\L,b,\rho_0,C) \ss \tilde\cL_0(\s,\t,\l,\L,b)$ such that it contains all the linear operators $L$ with kernels $K = K_e+K_o \geq 0$ such that $K_e$ and $K_o$ satisfy \eqref{kernel1} and \eqref{kernel2} respectively and
\begin{align}\label{L1}
\int\limits_{\R^n\sm B_{\r_0}}\frac{|K(y)-K(y-h)|}{|h|}dy\leq C
\end{align}
for every $|h|\leq \r_0/2$.

A sufficient condition for \eqref{L1} is for example that $|\n K(y)|\leq\L/|y|^{n+1+\s}$.

In this smaller class we are able to get $C^{1,\a}$ by studying the incremental quotients of solutions and using the a priori $C^\a$ estimates given by Theorem \ref{ca}. The proof follows the ideas of \cite{CC} and \cite{C1}.

\begin{theorem}\label{c1a}
Let $\s_0,\t_0,m,A_0>0$ and assume that H1, H2 and H3 holds. Let $I$ be an elliptic operator of the inf-sup type as in \eqref{eqinfsup} with all the linear operators in $\cL_1$. There is $\r_0>0$ small enough so that if $u$ is a bounded function in $\R^n$ such that in $B_1$,
\[
Iu = 0,
\]
in the viscosity sense. Then there is a universal $\a>0$ such that $u \in C^{1,\a}(B_{1/2})$ and
\[
\|u\|_{C^{1,\a}(B_{1/2})} \leq C\|u\|_\8
\]
for some universal $C > 0$.
\end{theorem}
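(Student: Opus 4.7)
The plan is to apply Theorem \ref{ca} to incremental quotients of $u$ and iterate, following the strategy of Caffarelli--Silvestre. For $|h| \leq \r_0/2$, translation invariance of $I$ gives $I u(\cdot+h) = 0$ in $B_{1-|h|}$, so by ellipticity of $I$ with respect to $\cL_1$ the difference $w_h(x) := u(x+h) - u(x)$ satisfies
\[
\cM^-_{\cL_1} w_h \leq 0 \leq \cM^+_{\cL_1} w_h \quad \text{in } B_{1-|h|}.
\]
Since $\cL_1 \ss \tilde\cL_0$, the same inequality holds with $\tilde\cL_0$ in place of $\cL_1$, and a first application of Theorem \ref{ca} to $u$ itself gives $u \in C^\a(B_{3/4})$ with norm bounded by $C\|u\|_\8$; in particular $|w_h| \leq C\|u\|_\8\, |h|^\a$ on $B_{3/4}$.

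The central difficulty is that $w_h/|h|^\a$ is not globally of order $1$: outside $B_1$ it may be as large as $2\|u\|_\8/|h|^\a$. To compensate, let $\eta$ be a smooth cutoff with $\eta \equiv 1$ on $B_{1/2}$ and supported in $B_{3/4}$, and set $v := \eta\, w_h/|h|^\a$, which is globally bounded by $C\|u\|_\8$. Here the smaller class $\cL_1$ enters decisively: for any $L \in \cL_1$ with kernel $K$, the change of variables $y\mapsto y - h$ and the modulus of continuity \eqref{L1} yield, for $x \in B_{1/2}$,
\[
\Big|L\bigl(v - \tfrac{w_h}{|h|^\a}\bigr)(x)\Big| = \Big|\int_{\R^n\sm B_{3/4}} \tfrac{w_h(x+y)}{|h|^\a}K(y)\, dy\Big| \leq \tfrac{C\|u\|_\8}{|h|^\a}\int_{\R^n\sm B_{\r_0}} |K(y) - K(y-h)|\, dy \leq C\|u\|_\8\, |h|^{1-\a}.
\]
Taking sup/inf over $L \in \cL_1$ and using $\cL_1 \ss \tilde\cL_0$, this gives $\cM^-_{\tilde\cL_0} v \leq C\|u\|_\8$ and $\cM^+_{\tilde\cL_0} v \geq -C\|u\|_\8$ in $B_{1/2}$, with constants independent of $h$ so long as $\a < 1$.

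Theorem \ref{ca} applied to $v$ then gives $\|v\|_{C^\a(B_{1/4})} \leq C\|u\|_\8$. Since $v = w_h/|h|^\a$ on $B_{1/2}$, this translates into the mixed-difference bound $|w_h(x) - w_h(y)| \leq C\|u\|_\8\, |h|^\a |x-y|^\a$ on $B_{1/4}$, which by a classical interpolation argument yields $u \in C^{2\a}(B_{1/4})$ provided $2\a < 1$. Iterating the scheme, at step $k$ we work with $\eta\, w_h/|h|^{k\a}$ and the tail correction becomes of order $|h|^{1-k\a}$, still admissible while $k\a < 1$. After finitely many steps $k\a$ crosses $1$, and a final application of the same scheme (applied to $u$ minus its linear part) promotes the regularity beyond $1$ to $u \in C^{1,\b}$ with $\b := k\a - 1 > 0$, via a standard Campanato characterization.

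The principal obstacle, as the display above makes clear, is the tail estimate: without the kernel modulus of continuity \eqref{L1}, the normalized quotient $w_h/|h|^\a$ cannot be made an admissible input to Theorem \ref{ca}, and the iteration collapses. Secondary technical points are that the argument must be carried out in the viscosity sense (via doubling of variables or sup/inf convolutions, as in \cite{BI} and \cite{C1}); at each iteration the rescaled equation must be verified to remain elliptic within a class satisfying hypotheses H1--H3, which follows from the scaling analysis of Section \ref{VSP}; and one must take $\r_0$ sufficiently small so that the cutoff does not interact badly with the inner balls appearing through the iteration.
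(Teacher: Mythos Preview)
Your argument follows the same scheme as the paper's: localize the incremental quotient with a cutoff, use the kernel condition \eqref{L1} to control the resulting tail, apply Theorem~\ref{ca}, and iterate up to Lipschitz and then once more. The one technical difference is where the cutoff is applied. The paper cuts off $u$ \emph{before} forming the difference, setting $w_1^h=\bigl((\eta u)(\cdot+h)-(\eta u)\bigr)/|h|^{k\bar\a}$ and $w_2^h$ analogously with $(1-\eta)u$; then the tail $Lw_2^h(x)$ is exactly $\int(1-\eta)u(x+y)\,|h|^{1-k\bar\a}\frac{K(y-h)-K(y)}{|h|}\,dy$ after a single change of variables, and \eqref{L1} applies directly. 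Your choice of cutting off $w_h$ itself means the displayed equality is not literally correct: the integrand carries a factor $1-\eta(x+y)$ which does not shift with $h$, so after the change of variables an extra term $\int u(x+y)\bigl[\eta(x+y)-\eta(x+y-h)\bigr]K(y)\,dy$ appears, and the integration region is $\{y:\eta(x+y)<1\}$ rather than $\R^n\sm B_{3/4}$. Both corrections are still $O(|h|)$ (the first because $\nabla\eta$ is bounded and supported in an annulus where $K$ is bounded), so your conclusion $\bigl|L(v-w_h/|h|^\a)(x)\bigr|\leq C\|u\|_\8|h|^{1-\a}$ survives and the bootstrap goes through; the paper's ordering simply avoids this extra bookkeeping.
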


In the proofs of our regularity results the odd part doesn't have to be of a fixed order. We could ask for example
\[
 |K_o|\leq b\max\1\frac{1-\t_1}{|y|^{n+\t_1}},\frac{1-\t_2}{|y|^{n+\t_2}}\2
\]
with $0<\t_1\leq \t_2<\min(1,\s)$. The reason is that the proofs will treat the lower order term as a perturbation term that can be made small enough after a dilation large enough. For the sake of keeping the exposition simpler we decided to restrict to the case of $\t_1=\t_2=\t$.


\section{Qualitative properties}\label{CPE}

This section is devoted to prove basic results that concern the definition of viscosity solution. First we take a look to the monotonicity properties which are inherited from assuming that the operator $I$ is elliptic with respect to a family $\cL$ with non negative kernels. Second we see how the set of test functions can be enlarged in the definition of viscosity solutions. We use these tools to prove the stability, comparison and maximum principle and existence of solutions for the Dirichlet problem.


\subsection{Monotonicity.}

\begin{lemma}[Monotonicity]\label{mono}
 Let $I$ be a elliptic with respect to a family $\cL$ of linear operators with non negative kernels. Let $u$ and $v$ be two bounded functions in $C^{1,1}(x)$ such that $v\geq u$ and $v(x_0)=u(x_0)$, then
\[
 Iv(x_0)\geq Iu(x_0).
\]
\end{lemma}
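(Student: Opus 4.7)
My plan is to reduce the statement to a sign property of the minimal extremal operator evaluated on $w := v - u$. By the ellipticity hypothesis~\eqref{ellipticity},
\[
Iv(x_0) - Iu(x_0) \geq \cM^-_\cL(v-u)(x_0) = \inf_{L \in \cL} Lw(x_0),
\]
so it suffices to prove that $Lw(x_0) \geq 0$ for each linear $L \in \cL$ with non-negative kernel $K$.

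First, I would verify that the linear part of $w$ at $x_0$ (in the $C^{1,1}(x_0)$ sense) must vanish. Subtracting the defining inequalities for $u$ and $v$ produces $p \in \R^n$ and $M>0$ with $|w(x_0+y) - p \cdot y| < M|y|^2$ for small $|y|$. If $p \neq 0$, evaluating this bound at $y = -r\,p/|p|$ for $r \to 0^+$ would yield $w(x_0+y) < -r|p| + Mr^2 < 0$, contradicting $w \geq 0$. Hence $p = 0$, and $0 \leq w(x_0+y) \leq M|y|^2$ near the origin.

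Second, I would evaluate $Lw(x_0)$ and show the result is non-negative. Starting from
\[
Lw(x_0) = \int \d_e(w,x_0;y) K_e(y)\, dy + \int \d_o(w,x_0;y) K_o(y)\, dy,
\]
the identities $w(x_0) = 0$, $\d_e(w,x_0;y) = w(x_0+y) + w(x_0-y)$, $\d_o(w,x_0;y) = w(x_0+y) - w(x_0-y)$, together with $K_e \pm K_o = K(\pm y)$, let me rewrite the integrand as $w(x_0+y) K(y) + w(x_0-y) K(-y)$; a change of variables in the second piece then gives
\[
Lw(x_0) = 2\int_{\R^n} w(x_0+y)\, K(y)\, dy \geq 0,
\]
where integrability near the origin comes from the quadratic bound on $w$ just proved, combined with the uniform integrability hypotheses~\eqref{intconde}--\eqref{intcondo} on $K_e$ and $K_o$.

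The most delicate point is the first step: it is the \emph{global} non-negativity of $w$, and not merely the equality $v(x_0) = u(x_0)$, that forces $p = 0$. Without $p=0$, the odd integral would have no definite sign, and the assumption that the \emph{total} kernel $K$ is non-negative (as opposed to $K_e$ and $K_o$ individually) would not suffice. Once $Lw(x_0) \geq 0$ is established uniformly in $L$, taking the infimum yields $\cM^-_\cL w(x_0) \geq 0$, completing the proof.
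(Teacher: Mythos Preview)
Your argument is correct and follows the same skeleton as the paper: use ellipticity to reduce to $\cM^-_\cL(v-u)(x_0)\geq 0$, then verify $Lw(x_0)\geq 0$ for each $L\in\cL$ by exploiting $w\geq 0$, $w(x_0)=0$, and $K\geq 0$. The only difference is that the paper works directly with the principal-value form \eqref{eqgeneral}: since $w(x_0)=0$, one has $Lw(x_0)=P.V.\int w(x_0+y)K(y)\,dy$ with a non-negative integrand, and the limit (which exists because $w\in C^{1,1}(x_0)$ and $L$ satisfies the integrability conditions) is automatically non-negative. Your intermediate step showing the linear part $p$ vanishes is correct and yields an explicit quadratic bound that makes $\int w(x_0+y)K(y)\,dy$ absolutely convergent, but it is not needed once one is willing to invoke the P.V. definition directly; the paper's version is accordingly a one-line proof.
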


\begin{proof}
By the ellipticity,
\[
 Iv(x_0) - Iu(x_0) \geq \cM^-_\cL(v-u)(x_0).
\]
Let $w(y) = (v-u)(x_0+y)$ such that $w(y)\geq0$ with equality for $y=0$. Then for any $L \in \cL$ with kernel $K\geq0$ we have
\[
Lw(0) = P.V. \int w(y)K(y) \geq 0.
\]
By taking the infimum we get that $\cM^-_\cL w(x_0) \geq 0$ which concludes the proof.
\end{proof}

\begin{lemma}\label{minlema}
Let $I$ be an elliptic operator with respect to a class $\cL$ of non negative kernels. Let $u$, $v$ be viscosity solutions of $Iu \leq f$, then $w=\min(u,v)$ is also a super solution.
\end{lemma}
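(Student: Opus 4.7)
The plan is to verify the viscosity supersolution condition for $w=\min(u,v)$ by reducing to the supersolution condition on whichever of $u$, $v$ realizes the minimum at the touching point, and then appealing to monotonicity (Lemma \ref{mono}) to pass from one test function to the other.

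More concretely, let $\varphi$ be a second order polynomial touching $w$ from below at a point $x_0$ in a neighborhood $N$ (so $\varphi(x_0)=w(x_0)$ and $\varphi(y)<w(y)$ for $y\in N\sm\{x_0\}$). Without loss of generality assume that $u(x_0)\leq v(x_0)$, so that $w(x_0)=u(x_0)$. Then $\varphi$ also touches $u$ from below at $x_0$ in $N$: indeed, $\varphi(x_0)=u(x_0)$, and for $y\in N\sm\{x_0\}$ we have $\varphi(y)<w(y)\leq u(y)$. Since $u$ is a supersolution of $Iu\leq f$, Definition \ref{viscosity} gives $I\tilde u(x_0)\leq f(x_0)$, where
\[
\tilde u=\begin{cases}\varphi & \text{in } N,\\ u & \text{in } \R^n\sm N.\end{cases}
\]

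Now define analogously
\[
\tilde w=\begin{cases}\varphi & \text{in } N,\\ w & \text{in } \R^n\sm N.\end{cases}
\]
Both functions are in $C^{1,1}(x_0)$ (they agree with the polynomial $\varphi$ in $N$), and outside $N$ we have $\tilde w = \min(u,v)\leq u=\tilde u$, while at $x_0$ they agree. Since $I$ is elliptic with respect to a class of operators with nonnegative kernels, Lemma \ref{mono} applied to $\tilde u \geq \tilde w$ with equality at $x_0$ yields $I\tilde w(x_0)\leq I\tilde u(x_0)\leq f(x_0)$, which is exactly the supersolution condition for $w$ with respect to the test polynomial $\varphi$.

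The main subtlety is just the bookkeeping of which function realizes the minimum and checking that the strict touching condition in Definition \ref{viscosity} transfers from $w$ to $u$; both are immediate from $\varphi\leq w\leq u$ with equality at $x_0$. The essential analytic input is the monotonicity lemma, whose hypothesis on the sign of the kernels is precisely why the conclusion belongs to this ``positive kernel'' setting rather than the general class $\cL_0$.
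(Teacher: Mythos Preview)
Your proof is correct and follows essentially the same route as the paper: reduce to the supersolution inequality for whichever of $u,v$ attains the minimum at the touching point, then invoke the monotonicity Lemma~\ref{mono} to pass from the test function built with $u$ outside $N$ to the one built with $w$ outside $N$. The only difference is notation.
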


\begin{proof}
Let $\varphi$ be a function touching $w$ by below at $x$ in $N$ and assume without loss of generality that $w(x_0) = u(x_0)$. Then $\varphi$ also touches $u$ by below at $x_0$ in $N$ and we use its equation. For
\begin{align*}
v=\left\{\begin{array}{ll} 
\varphi& \hbox{in} \ N,\\
u&\hbox{in} \ \R^n\sm N,
\end{array}\right.
\end{align*}
we have $Iv(x_0) \leq f(x_0)$.

Let $\tilde v$ be defined by
\begin{align*}
\tilde v=\left\{\begin{array}{ll} 
\varphi& \hbox{in} \ N,\\
w&\hbox{in} \ \R^n\sm N.
\end{array}\right.
\end{align*}
Then by the monotonicity Lemma \ref{mono} applied to $v$ and $\tilde v$ at $x_0$ we get $I\tilde v(x_0) \leq Iv(x_0) \leq f(x_0)$ which concludes that $Iw \leq f$.
\end{proof}


\subsection{A larger class of test functions.}

\begin{lemma}\label{testfunc}
Let $I$ be elliptic with respect to a class $\cL$ of non negative kernels satisfying \eqref{intconde} and \eqref{intcondo} uniformly. Let $u:\R^n \to \R$ such that $Iu \geq f$ in the viscosity sense and $\varphi$ touching $u$ by above at $x$ in a neighborhood $N$. Then $Iv(x) \geq f(x)$ for $v$ defined as
\begin{align*}
v=\left\{\begin{array}{ll} 
\varphi& \hbox{in} \ N,\\
u&\hbox{in} \ \R^n\sm N.
\end{array}\right. 
\end{align*}
given that $\varphi \in C^{1,1}(x)$.
\end{lemma}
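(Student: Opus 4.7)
The plan is to approximate $\varphi$ near $x$ from above by a second-order polynomial $P$, apply the polynomial version of Definition \ref{viscosity} to $P$, and then transfer the inequality to $v$ via the monotonicity Lemma \ref{mono} and ellipticity.

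First I would reduce to the case where the neighborhood is a small ball. For $0 < r$ with $B_r(x) \ss N$, set $v_r := \varphi\,\chi_{B_r} + u\,\chi_{\R^n \sm B_r}$. Since $\varphi \geq u$ on $N$, one has $v \geq v_r$ everywhere with equality at $x$, and both functions lie in $C^{1,1}(x) \cap L^\8(\R^n)$ because they agree with $\varphi$ near $x$. Lemma \ref{mono} therefore gives $Iv(x) \geq Iv_r(x)$, so it suffices to show $\liminf_{r \to 0} Iv_r(x) \geq f(x)$. Next, writing $v_0 := \n\varphi(x)$ and letting $M_0$ be the $C^{1,1}(x)$ constant of $\varphi$, I define $P(y) := \varphi(x) + v_0 \cdot (y-x) + (M_0+1)|y-x|^2$. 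The $C^{1,1}$ estimate yields $P(y) - \varphi(y) \geq |y-x|^2$ on a sufficiently small $B_r(x) \ss N$, which combined with the strict inequality $\varphi > u$ on $N \sm \{x\}$ means that $P$ strictly touches $u$ from above at $x$ inside $B_r$. Definition \ref{viscosity} then produces $Iu^P(x) \geq f(x)$, for $u^P := P\,\chi_{B_r} + u\,\chi_{\R^n \sm B_r}$.

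To close the argument I would compare $u^P$ with $v_r$. The difference $w := u^P - v_r$ equals $P - \varphi$ on $B_r$ and vanishes outside, and satisfies $0 \leq w(y) \leq (2M_0+1)|y-x|^2$ with $w(x) = 0$ and $\n w(x) = 0$. Ellipticity gives
\[
Iv_r(x) \geq Iu^P(x) - \cM^+_\cL w(x) \geq f(x) - \cM^+_\cL w(x),
\]
and the pointwise bounds $|\d_e(w,x;y)|, |\d_o(w,x;y)| \leq 2(2M_0+1)|y|^2 \chi_{B_r}(y)$, together with the elementary inequalities $|y|^2 \leq 2|y|^2/(|y|^2+1)$ and $|y|^2 \leq 2|y|/(|y|+1)$ valid for $|y| \leq 1$, reduce $\cM^+_\cL w(x)$ to a constant multiple of $\sup_{L \in \cL} \1 \int_{B_r} \frac{|y|^2}{|y|^2+1}|K_e|\,dy + \int_{B_r} \frac{|y|}{|y|+1}|K_o|\,dy \2$. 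The uniform integrability hypotheses \eqref{intconde} and \eqref{intcondo} force this quantity to tend to $0$ as $r \to 0$, and letting $r \to 0$ combined with the reduction step completes the proof.

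The main obstacle I anticipate is making the decay of $\cM^+_\cL w(x)$ uniform across $L \in \cL$. This is immediate for the concrete classes $\tilde{\cL}_0$ and $\cL_1$ used later in the paper, where the explicit pointwise bounds on $K_e$ and $K_o$ give the explicit decay $O(r^{2-\s} + r^{2-\t})$; in general one needs to interpret ``uniformly'' in \eqref{intconde}--\eqref{intcondo} as uniform absolute continuity of the relevant integrals, which is the only subtle point in the argument.
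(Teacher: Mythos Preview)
Your proof is correct and follows essentially the same approach as the paper: approximate $\varphi$ by a quadratic touching from above, invoke the viscosity definition for the polynomial test function, and control the discrepancy via the uniform integrability conditions \eqref{intconde}--\eqref{intcondo}. The only cosmetic differences are that the paper uses two polynomials $p,q$ sandwiching $\varphi$ and applies ellipticity before monotonicity, whereas you use a single upper polynomial $P$ and reverse the order of those two steps; both routes lead to the same tail estimate, and your remark about needing uniform absolute continuity is exactly the implicit assumption the paper also makes.
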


\begin{proof}
Fix $p$ and $q$ second order polynomials that touch $\varphi$, by below and above respectively, at $x$ in $B_r(x) \ss N$. Let
\begin{align*}
w=\left\{\begin{array}{ll} 
q& \hbox{in} \ B_r(x),\\
u&\hbox{in} \ \R^n\sm B_r(x),
\end{array}\right.
\quad v_r=\left\{\begin{array}{ll} 
q& \hbox{in} \ B_r(x),\\
v&\hbox{in} \ \R^n\sm B_r(x).
\end{array}\right. 
\end{align*}
By the ellipticity
\[
Iv(x) \geq Iv_r(x) + \cM^-_\cL (v-v_r)(x),
\]
and thanks to the monotonicity Lemma \ref{mono} applied to $v_r\geq w$ we have $Iv_r(x)\geq Iw(x)$, so that
\[
Iv(x)\geq Iw(x) + \cM^-_\cL (v-v_r)(x).
\]
Note that $Iw(x) \geq f(x)$, so we only need to estimate the second term. Now, $v-v_r$ is supported in $B_r(x)$ and it is equal to $\varphi-q$ which is bounded by $-(q-p)$ and zero. For $L\in\cL$ with kernel $K$,
\begin{align*}
L(v-v_r)(x) &= \int_{B_r} \d_e((v-v_r),x;y)K_e(y)dy + \int_{B_r} \d_o((v-v_r),x;y)K_o(y)dy\\
&\geq -C\3\int_{B_r} |y|^2K_e(y)dy + \int_{B_r} |y|K_o(y)dy\4\\
&\geq -C\e,
\end{align*}
for $\e>0$ arbitrarily small if $r=r(\e)$ is small enough (independent of $L\in\cL$). By taking the infimum above among every $L\in\cL$ we get that $Iv(x) \geq f(x) - C\e$ and we just need to take $\e\to0$ to conclude.
\end{proof}

Next we have an even stronger result, that tells us that we can compute $I$ classically every time we have a $\varphi \in C^{1,1}(x)$ touching by below.

\begin{lemma}\label{welldefined}
Let $I$ be an elliptic operator of the inf-sup (sup-inf) type as in \eqref{eqinfsup} with all the linear operators in $\tilde{\cL}_0$ satisfying H1. Let $u:\R^n\to\R$ such that $Iu \leq f$  in the viscosity sense and $\varphi$ touching $u$ by below at $x$ in a neighborhood $N$. Then $Iu(x)$ is defined in the classical sense and we have $Iu(x)\leq f(x)$ given that $\varphi \in C^{1,1}(x)$.
\end{lemma}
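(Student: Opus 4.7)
The plan is to approximate $u$ by $C^{1,1}(x)$ functions $v_r$ to which the supersolution-analog of Lemma~\ref{testfunc} applies, and then pass to the limit $r\to 0$ using the ellipticity of $I$. For small $r>0$ with $B_r(x)\ss N$, I set $v_r=\varphi$ in $B_r(x)$ and $v_r=u$ elsewhere. Since $\varphi\leq u$ in $N$ with $\varphi(x)=u(x)$, we have $v_r\leq u$ with equality at $x$, and $v_r\in C^{1,1}(x)$ because it coincides with $\varphi$ near $x$; thus $Iv_r(x)$ is classically defined by Definition~\ref{nonlocalop}(i). Repeating the proof of Lemma~\ref{testfunc} with the inequalities reversed -- polynomials sandwiching $\varphi$ at $x$, the viscosity supersolution inequality applied to the lower polynomial, and the uniform integrability of $\tilde\cL_0$ -- yields $Iv_r(x)\leq f(x)$.

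Next I would attach a classical meaning to $Iu(x)$. As $r\downarrow 0$, $v_r$ increases pointwise to $u$, so Lemma~\ref{mono} applied between $v_{r_1}$ and $v_{r_2}$ for $r_1<r_2$ (both in $C^{1,1}(x)$) shows that $Lv_r(x)$ is monotone increasing in $r^{-1}$ for every $L\in\tilde\cL_0$. Monotone convergence, combined with the bound $\d_e(u,x;y)\geq \d_e(\varphi,x;y)\geq -C|y|^2$ near $y=0$ coming from the touching, identifies the limit of $Lv_r(x)$ with the classical integral $\int\d_e(u,x;y)K_e(y)\,dy + P.V.\int\d_o(u,x;y)K_o(y)\,dy$; the odd piece is handled via the principal value, using $|K_o|\leq (1-\t)b/|y|^{n+\t}$ with $\t<1$ and the boundedness of $u$. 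Since suprema commute with increasing monotone limits, $\sup_\alpha L_{\alpha,\beta}v_r(x)\uparrow \sup_\alpha L_{\alpha,\beta}u(x)$ pointwise in $\beta$, and this defines $Iu(x)=\inf_\beta\sup_\alpha L_{\alpha,\beta}u(x)$ classically.

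Finally I would transfer the inequality through ellipticity:
\[
Iu(x)-Iv_r(x)\leq \cM^+_{\tilde\cL_0}(u-v_r)(x),
\]
so $Iu(x)\leq f(x)+\cM^+_{\tilde\cL_0}(u-v_r)(x)$ and it suffices to show the error term vanishes as $r\to 0$. With $w:=u-v_r\geq 0$ supported in $B_r(x)$ and $w(x)=0$, the explicit formula for $\cM^+_{\cL_0}\geq \cM^+_{\tilde\cL_0}$ reduces this to showing
\[
\int_{B_r}(u-\varphi)(x+y)\left[\frac{(2-\s)\L}{|y|^{n+\s}}+\frac{(1-\t)b}{|y|^{n+\t}}\right]dy \to 0.
\]
This is the main obstacle: it requires $(u-\varphi)(x+y)\to 0$ as $y\to 0$. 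The lower semicontinuity of $u$ together with the strict touching $\varphi<u$ on $N\setminus\{x\}$ only yields $\liminf_{y\to 0}(u-\varphi)(x+y)=0$; to upgrade to a genuine limit, one first replaces $\varphi$ by the $C^{1,1}(x)$ function $\varphi-\eta|y-x|^2$, which still touches $u$ from below, runs the argument with this perturbation, and then lets $\eta\to 0$.
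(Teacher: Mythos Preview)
Your overall strategy---define $v_r$ by gluing $\varphi$ to $u$ on $B_r(x)$, use the supersolution inequality for $v_r$, and pass to the limit via ellipticity---matches the paper's. The gap is in the passage to the limit, and your proposed fix does not close it.

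The real obstacle in your last display is not whether $(u-\varphi)(x+y)\to 0$ as $y\to 0$; it is whether $(u-\varphi)(x+y)$ vanishes \emph{fast enough} to be integrable against the singular weight $|y|^{-(n+\s)}$. Lower semicontinuity and the touching condition give only $u-\varphi\geq 0$ with value $0$ at $x$; they say nothing about the rate. Replacing $\varphi$ by $\varphi-\eta|\cdot-x|^2$ makes $u-\varphi$ \emph{larger}, so the perturbation cannot help. The same issue undermines your step identifying $\lim_r Lv_r(x)$ with a classical $Lu(x)$: monotone convergence produces a limit, but nothing prevents it from being $+\infty$, and boundedness of $u$ alone does not make $|\d_o(u,x;y)|/|y|^{n+\t}$ integrable near $0$ (nor does it guarantee existence of the principal value). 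As written, the argument is circular: you invoke the ellipticity inequality $Iu(x)\leq Iv_r(x)+\cM^+_{\tilde\cL_0}(u-v_r)(x)$, which already presupposes that $Iu(x)$ is classically defined.

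What is missing is precisely the content of Lemma~\ref{interpolation}. From $Iv_r(x)\leq f(x)$ one deduces $\cM^-_{\cL_0}v_r(x)\leq f(x)$, which bounds
\[
\int_{B_{r_0}}\Bigl(\l(2-\s)\frac{\d_e^+(v_r,x;y)}{|y|^{n+\s}} - b(1-\t)\frac{|\d_o(v_r,x;y)|}{|y|^{n+\t}}\Bigr)\,dy
\]
uniformly in $r$. Lemma~\ref{interpolation}, applied with $\varphi$ as the touching function and using crucially that $\s>\t$, absorbs the negative $|\d_o|$-term into the $\d_e^+$-term plus a harmless $\varphi$-remainder, yielding a uniform bound on $\int_{B_{r_0}}\d_e^+(v_r)/|y|^{n+\s}$. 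Fatou then gives $\int\d_e^+(u)/|y|^{n+\s}<\infty$, and the pointwise inequality $|\d_o(u)|\leq |\d_o(\varphi)|+\d_e^+(u)+\d_e^-(\varphi)$ transfers this to $\int|\d_o(u)|/|y|^{n+\t}<\infty$. With these integrabilities in hand each $Lu(x)$ is finite, $Iu(x)$ is genuinely defined, and your final ellipticity step goes through by absolute continuity of the integral---which is exactly how the paper finishes.
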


To prove Lemma \ref{welldefined} we need an interpolation result that will allow us to replace the $\t$ derivative by the $\s$ derivative and a residue term evaluated at the test function $\varphi$. This result is also useful when the function touching by below is the convex envelope as $\d_e^-(\varphi) = 0$.

\begin{lemma}\label{interpolation}
Let $u:\R^n \to \R$, $x\in\R^n$, $2>\s>\t>0$ and $r_0>0$ such that the following integrals are finite,
\[
\int_{B_{r_0}} \frac{\d^+_e(u,x;y)}{|y|^{n+\s}}dy \ \ \text{and} \ \ \int_{B_{r_0}}\frac{|\d_o(u,x;y)|}{|y|^{n+\t}} dy.
\]
Let $\varphi$ be a function defined in $B_{r_0}(x)$ and touching $u$ by below at $x$. Then
\begin{align*}
&\int_{B_{r_0}} \l(2-\s)\frac{\d^+_e(u,x;y)}{|y|^{n+\s}} - b(1-\t)\frac{|\d_o(u,x;y)|}{|y|^{n+\t}} dy \geq\\
&\int_{B_{r_0}} \a\l(2-\s)\frac{\d^+_e(u,x;y)}{|y|^{n+\s}} - b(1-\t)\frac{\d_e^-(\varphi,x;y)+|\d_o(\varphi,x;y)|}{|y|^{n+\t}}dy,
\end{align*}
for $\a \in (0,1)$ given that
\begin{align}\label{interpol}
r_0 \leq \1\frac{(1-\a)\l(2-\s)}{b(1-\t)}\2^{1/(\s-\t)}.
\end{align}
\end{lemma}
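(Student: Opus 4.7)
The plan is to reduce the interpolation inequality to a pointwise comparison between the integrands and then use the smallness of $r_0$ to trade the $\t$-order denominator for the $\s$-order denominator.

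After bringing the $\a\l(2-\s)\d_e^+(u,x;y)/|y|^{n+\s}$ term to the left and the $b(1-\t)$ terms to the right, the inequality is equivalent to
\begin{equation*}
\int_{B_{r_0}} (1-\a)\l(2-\s)\frac{\d^+_e(u,x;y)}{|y|^{n+\s}}\,dy \;\geq\; \int_{B_{r_0}} b(1-\t)\frac{|\d_o(u,x;y)|-\d_e^-(\varphi,x;y)-|\d_o(\varphi,x;y)|}{|y|^{n+\t}}\,dy.
\end{equation*}
I would prove this by finding a pointwise bound on the numerator on the right-hand side, then using the hypothesis on $r_0$.

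\textbf{Step 1: Pointwise comparison.} Let $w=u-\varphi$. Since $\varphi$ touches $u$ from below at $x$ in $B_{r_0}(x)$, we have $w\geq 0$ there and $w(x)=0$. A direct expansion gives
\begin{align*}
\d_e(u,x;y) &= \d_e(\varphi,x;y) + w(x+y)+w(x-y),\\
\d_o(u,x;y) &= \d_o(\varphi,x;y) + w(x+y)-w(x-y).
\end{align*}
From the second line and $w\geq 0$ one gets $|\d_o(u,x;y)|-|\d_o(\varphi,x;y)| \leq w(x+y)+w(x-y)$, and combining with the first line yields $|\d_o(u,x;y)|-|\d_o(\varphi,x;y)| \leq \d_e(u,x;y)-\d_e(\varphi,x;y)$. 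Using the identity $\d_e^+(\varphi,x;y)=\d_e(\varphi,x;y)+\d_e^-(\varphi,x;y)$ and then dropping the nonnegative quantity $\d_e^+(\varphi,x;y)$ gives
\begin{equation*}
|\d_o(u,x;y)|-\d_e^-(\varphi,x;y)-|\d_o(\varphi,x;y)| \;\leq\; \d_e(u,x;y)-\d_e^+(\varphi,x;y) \;\leq\; \d_e^+(u,x;y).
\end{equation*}

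\textbf{Step 2: Trading $\t$ for $\s$ inside $B_{r_0}$.} The assumption \eqref{interpol} is precisely $r_0^{\s-\t}\leq (1-\a)\l(2-\s)/[b(1-\t)]$, so for every $y\in B_{r_0}$,
\begin{equation*}
\frac{b(1-\t)}{|y|^{n+\t}}=\frac{b(1-\t)|y|^{\s-\t}}{|y|^{n+\s}}\leq \frac{(1-\a)\l(2-\s)}{|y|^{n+\s}}.
\end{equation*}
Multiplying the pointwise inequality from Step 1 (whose right-hand side $\d_e^+(u,x;y)\geq 0$) by the positive factor $b(1-\t)/|y|^{n+\t}$ and applying this bound gives
\begin{equation*}
b(1-\t)\frac{|\d_o(u,x;y)|-\d_e^-(\varphi,x;y)-|\d_o(\varphi,x;y)|}{|y|^{n+\t}}\leq (1-\a)\l(2-\s)\frac{\d_e^+(u,x;y)}{|y|^{n+\s}}
\end{equation*}
for every $y\in B_{r_0}$. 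Integrating over $B_{r_0}$ closes the argument; the integrability hypotheses ensure everything is finite.

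The only mildly delicate point is the sign bookkeeping in Step 1 (verifying that the identity $\d_e^+=\d_e+\d_e^-$ really lets one absorb the $-\d_e^-(\varphi)$ term without losing control when $\d_e(\varphi,x;y)$ changes sign); once that is settled the geometric estimate in Step 2 is immediate from the definition of $r_0$.
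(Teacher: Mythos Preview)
Your proof is correct and follows essentially the same route as the paper's: both establish the pointwise inequality $|\d_o(u,x;y)| - \d_e^-(\varphi,x;y) - |\d_o(\varphi,x;y)| \leq \d_e^+(u,x;y)$ from the touching condition, and then use the hypothesis on $r_0$ to compare the kernels $b(1-\t)|y|^{-(n+\t)}$ and $(1-\a)\l(2-\s)|y|^{-(n+\s)}$ on $B_{r_0}$. The only cosmetic difference is that you rearrange the target inequality first and argue pointwise before integrating, whereas the paper substitutes the pointwise bound directly into the original integral and then simplifies; the content is the same.
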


\begin{proof}
Since $\varphi$ touches $u$ by below, we have that for every $y \in B_r$,
\begin{align*}
\d_e^+(u - \varphi,x;y) &= (u-\varphi)(x+y)+(u-\varphi)(x-y),\\
&\geq |(u-\varphi)(x+y)-(u-\varphi)(x-y)|,\\
&= |\d_o(u-\varphi,x;y)|,
\end{align*}
and also,
\begin{align*}
\d_e^+(u,x;y) &\geq \d_e^+(u - \varphi,x;y) - \d_e^-(\varphi,x;y),\\ 
|\d_o(u-\varphi,x;y)| &\geq |\d_o(u,x;y)| - |\d_o(\varphi,x;y)|,
\end{align*}
so that 
\[
\d_e^+(u,x;y) - |\d_o(u,x;y)| \geq -\d_e^-(\varphi,x;y)-|\d_o(\varphi,x;y)|.
\]
Now we can replace $|\d_o|$ by $\d_e^+$ in the integral, 
\begin{align*}
&\int_{B_{r_0}} \l(2-\s)\frac{\d^+_e(u)}{|y|^{n+\s}} - b(1-\t)\frac{|\d_o(u)|}{|y|^{n+\t}} dy \geq\\
&\int_{B_{r_0}}\d_e^+(u)\3 \frac{\l(2-\s)}{|y|^{n+\s}} - \frac{b(1-\t)}{|y|^{n+\t}}\4 - b(1-\t)\frac{\d_e^-(\varphi)+|\d_o(\varphi)|}{|y|^{n+\t}}dy.
\end{align*}

By using that
\[
r_0 \leq \1\frac{(1-\a)\l(2-\s)}{b(1-\t)}\2^{1/(\s-\t)},
\]
and that $\s > \t$ we can substitute the difference of the fractions by $\a$ times $|y|^{-(n+\s)}$,
\[
\int_{B_{r_0}}\d_e^+(u)\3 \frac{\l(2-\s)}{|y|^{n+\s}} - \frac{b(1-\t)}{|y|^{n+\t}}\4dy \geq \a\l(2-\s)\int_{B_{r_0}}\frac{\d_e^+(u)}{|y|^{n+\s}}.
\]
\end{proof} 

\begin{proof}[Proof of Lemma \ref{welldefined}]
We check first that $Lu$ can be computed in the classical sense at $x$. Because $u$ is bounded we only  care about the convergence of the integrals around the origin.

Let $\varphi$ be defined in $B_{r_0}(x)$ and for $r\leq r_0$
\[
v_r(y) =
\begin{cases}
 u        \text{ in } B_r(x),\\
 \varphi  \text{ in } \R^n\sm B_r.
\end{cases}
\]

The differences $\d^-_e(v_r,x;y)$, parametrized by $r$, decrease to $\d^-_e(u,x;y)$ as $r$ goes to zero. Since
\[
\int_{B_{r_0}} \frac{\d_e^-(v_{r_0},x;y)}{|y|^{n+\s}}dy <\8,
\]
we have by monotone convergence that
\[
\int_{B_{r_0}} \frac{\d_e^-(u,x;y)}{|y|^{n+\s}}dy <\8.
\]

By using $f(x) \geq \cM^-_{\tilde{\cL}_0}v_r(x)$, which implies $f(x) \geq \cM^-_{\cL_0}v_r(x)$, and the boundedness of $u$,
\[
M \geq \int_{B_{r_0}} \l(2-\s)\frac{\d_e^+(v_r,x;y)}{|y|^{n+\s}} - b(1-\t)\frac{|\d_o(v_r,x;y)|}{|y|^{n+\t}}dy
\]
for some $M$ independent of $r$. Use now Lemma \ref{interpolation} to keep only the term with $\d^+_e(v_r)$, this requires $r_0$ sufficiently small,
\begin{align*}
&M + b(1-\t)\int_{B_{r_0}} \frac{|\d_o(\varphi,x;y)| + \d_e^-(\varphi,x;y)}{|y|^{n+\t}}dy \geq \\ 
&\frac{\l(2-\s)}{2}\int_{B_{r_0}} \frac{\d_e^+(v_r,x;y)}{|y|^{n+\s}} dy.
\end{align*}
The left hand side above is finite and independent of $r$. By Fatou's Lemma,
\[
\int_{B_{r_0}} \frac{\d_e^+(u,x;y)}{|y|^{n+\s}}dy <\8.
\]

Now recall from the proof of Lemma \ref{interpolation} the identity
\[
|\d_o(v_r,x;y)| \leq |\d_o(\varphi,x;y)| + \d_e^+(v_r,x;y) + \d_e^-(\varphi,x;y).
\]
The last two terms are integrable against $|y|^{-(n+\s)}$ around the origin and therefore they are also integrable against $|y|^{-(n+\t)}$ around the origin as well as the whole right hand side. Moreover the integral can be bounded by above independently of $r$. By Fatou's Lemma we then get that $|\d_o(u,x;y)|$ is integrable against $|y|^{-(n+\t)}$ in $B_{r_0}$.

We have shown that each term $\d_e(u,x;y)/|y|^{n+\s}$ and $|\d_o(u,x;y)|/|y|^{n+\t}$ is integrable, then for every linear operator $L_{\a,\b}u(x)$ is well defined. Therefore $Iu(x)$ can be computed by being an inf-sup combination of $L_{\a,\b}$. To see that $Iu(x) \leq f(x)$ we use the ellipticity,
\begin{align*}
Iu(x) &\leq Iv_r(x) + \cM_{\cL}^+(u-v_r)\\
      &\leq f(x) + \cM_\s^+(u-v_r)(x) + b|D_\t|(u-v_r)(x).
\end{align*}
Both integrals go to zero by absolute continuity.
\end{proof}


\subsection{Stability.}

We are interested in studying limit of sub or super solutions. To state the result we need first to recall the definition of $\G$ convergence.

\begin{definition}
We say that a sequence of lower semicontinuous functions $u_k$ $\G$-converge to $u$ in a set $\W$ if the two following conditions hold
\begin{itemize}
\item[(i)] For every sequence $x_k\to x$ in $\Omega$, $\liminf_{\k\to\infty}u_k(x_k)\geq u(x)$.
\item[(ii)] For every $x\in\Omega$, there is a sequence $x_k\to x$ in $\Omega$ such that
\[
\limsup\limits_{k\to\infty}u_k(x_k)=u(x).
\]  
\end{itemize}
\end{definition}

\begin{lemma}\label{converge}
Let $I$ be an elliptic operator with respect to a class $\cL$ with non negative kernels and satisfying the integrability conditions \eqref{intconde} and \eqref{intcondo} uniformly. Let $u_k$ be a sequence of functions that are uniformly bounded in $\R^n$ and lower semicontinuous in $\W\ss\R^n$ such that
\begin{itemize}
\item[(i)] $Iu_k\leq f_k$ in $\Omega$
\item[(ii)] $u_k\to u$ in the $\G$ sense in $\Omega$,
\item[(iii)] $u_k\to u$ a.e. in $\R^n$ and
\item[(iv)] $f_k\to f$ locally uniformly un $\Omega$ for some continuous function $f$.
\end{itemize}
Then $Iu\leq f$ in $\Omega$.
\end{lemma}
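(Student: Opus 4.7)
My plan is to adapt the classical viscosity stability argument to the nonlocal setting: use $\G$-convergence to transfer a strict test polynomial from $u$ down to the approximations $u_k$, and then pass to the limit in the nonlocal inequality using the uniform integrability of $\cL$ together with the a.e.\ convergence of $u_k$. Let $\varphi$ be a second order polynomial touching $u$ from below at $x_0 \in \W$ in a neighborhood $N$, strictly in the sense of Definition~\ref{viscosity}, and let $v = \varphi$ in $N$ and $v = u$ outside; the goal is to verify $Iv(x_0) \leq f(x_0)$.

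The first step is to locate approximating test points. Fix $r>0$ with $\overline{B_r(x_0)} \ss N$. Lower semicontinuity of $u_k$ on the compact set yields a minimizer $x_k$ of $u_k-\varphi$ on $\overline{B_r(x_0)}$; set $c_k := u_k(x_k)-\varphi(x_k)$. The recovery sequence from condition (ii) of $\G$-convergence provides $x_k' \to x_0$ with $u_k(x_k') \to u(x_0) = \varphi(x_0)$, whence $c_k \leq u_k(x_k') - \varphi(x_k') \to 0$. On any subsequential limit $x_k \to \bar x \in \overline{B_r(x_0)}$, condition (i) gives $u(\bar x) \leq \liminf u_k(x_k) \leq \varphi(\bar x)$, and the strict touching forces $\bar x = x_0$; hence $x_k \to x_0$ and $c_k \to 0$. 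For $k$ large, $x_k$ is interior to $B_r(x_0)$ and $\varphi + c_k$ touches $u_k$ from below there (made strict by a harmless perturbation $-\eta|x-x_k|^2$), so the super-solution inequality together with the super-solution analogue of Lemma~\ref{testfunc} produces $Iv_k(x_k) \leq f_k(x_k)$, where $v_k = \varphi + c_k$ in $B_r(x_0)$ and $v_k = u_k$ outside.

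The second step is the passage to the limit. By the inf-sup structure of $I$,
\[
Iv(x_0) - Iv_k(x_k) \leq \sup_{L \in \cL}\bigl[Lv(x_0) - Lv_k(x_k)\bigr],
\]
and I bound each $Lv(x_0) - Lv_k(x_k)$ by splitting into $|y| \leq r/4$ and $|y| > r/4$. On the small region both $v$ at $x_0$ and $v_k$ at $x_k$ are determined by $\varphi$ (the constant $c_k$ drops out of the second differences); since $\d_e(\varphi, \cdot; y)$ is $x$-independent and $\d_o(\varphi, x; y) = 2 \n\varphi(x) \cdot y$, the entire contribution is controlled by $|\n\varphi(x_k) - \n\varphi(x_0)| \int_{B_{r/4}} |y||K_o(y)|\,dy$ and vanishes uniformly in $\cL$ by \eqref{intcondo}. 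On the complementary region $K_e, K_o$ are uniformly $L^1$, so modulo polynomial pieces that go to zero by continuity and the boundary annulus (which is negligible since $|x_k - x_0| \to 0$) the leading contribution has the form $F(x_0) - G_k(x_k)$ with $F(x) = \int u(x+y) K(y) \chi_{\{|y|>r/4\}}\,dy$ and $G_k(x) = \int u_k(x+y) K(y) \chi_{\{|y|>r/4\}}\,dy$. Here $F$ is continuous as an $L^\8 \ast L^1$ convolution, and $\{G_k\}$ is equicontinuous uniformly in $k$ (same $L^\8 \ast L^1$ modulus) and converges pointwise to $F$ by $u_k \to u$ a.e.\ and dominated convergence; hence $G_k(x_k) \to F(x_0)$. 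Together with the locally uniform convergence $f_k \to f$, this yields $Iv(x_0) \leq \liminf_k f_k(x_k) = f(x_0)$.

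The main obstacle is exactly this tail estimate, where two simultaneous limits must be reconciled — $u_k \to u$ in the function and $x_k \to x_0$ in the base point — while preserving uniformity over the entire class $\cL$ so that the inf-sup structure of $I$ survives. The decomposition into a ``fixed function'' piece $F(x_0)-F(x_k)$ (handled by translation continuity of $L^\8 \ast L^1$) and a ``moving function'' piece $F(x_k)-G_k(x_k)$ (handled by equicontinuity plus pointwise dominated convergence) is the key device: it reduces the whole uniformity question to the uniform $L^1$ bound on the kernel tails already built into the hypothesis \eqref{intconde}--\eqref{intcondo}.
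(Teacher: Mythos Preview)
Your first step---transferring the test polynomial to $u_k$ via $\G$-convergence, locating points $x_k\to x_0$ with $c_k\to 0$, and obtaining $Iv_k(x_k)\leq f_k(x_k)$---matches the paper's argument essentially verbatim. The divergence is in the passage to the limit. You invoke ``the inf-sup structure of $I$'' to write $Iv(x_0)-Iv_k(x_k)\leq \sup_{L\in\cL}\bigl[Lv(x_0)-Lv_k(x_k)\bigr]$, but the lemma only assumes that $I$ is \emph{elliptic} with respect to $\cL$ (Definition~\ref{ellipticoperator}), and ellipticity compares $Iu$ and $Iw$ at the \emph{same} point; no inf-sup representation is part of the hypothesis. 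This is a genuine gap. It also forces a second difficulty: your tail term $F(x_0)-G_k(x_k)$ contains the piece $F(x_0)-F(x_k)$, whose smallness uniformly in $L$ amounts to a uniform $L^1$-translation modulus for the family $\{K\chi_{|y|>r/4}:L\in\cL\}$. A uniform $L^1$ bound on the tails (which is all \eqref{intconde}--\eqref{intcondo} give) does not imply this.

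The paper sidesteps both problems by inserting the intermediate value $Iv(x_k)$:
\[
Iv_k(x_k)-Iv(x_0)=\bigl[Iv_k(x_k)-Iv(x_k)\bigr]+\bigl[Iv(x_k)-Iv(x_0)\bigr].
\]
The second bracket tends to zero because $v=\varphi$ is $C^{1,1}$ near $x_0$ and hence $Iv$ is continuous there by Definition~\ref{nonlocalop}(ii); this uses only the abstract definition of a nonlocal operator and requires no uniformity over $\cL$. The first bracket is estimated at the single point $x_k$ by ellipticity, $|Iv_k(x_k)-Iv(x_k)|\leq \sup_{L\in\cL}|L(v_k-v)(x_k)|$, and since $v_k-v$ equals the constant $c_k$ in $B_r(x_0)$ the only surviving contribution is the tail integral $\int_{\R^n\sm B_{r/2}}|(v_k-v)(x_k+y)|K(y)\,dy$---no translation of $K$ appears. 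In short, your ``fixed function / moving function'' split is a natural idea, but the paper's ``same point / moving point'' split is the one that matches the stated hypotheses.
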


\begin{proof}
Let $\varphi$ be a test function touching $u$ by below at $x$ in $N$. Because $u_k - \varphi$ $\G$-converges to $(u-\varphi)$ there exists a sequence $x_k \to x$ such that,
\[
(u_k -\varphi)(x_k) = \inf_N (u_k-\varphi) = d_k.
\]
Therefore $\varphi+d_k$ touches $u_k$ at $x_k$ in $N$, starting at some $k$ sufficiently large.

Let
\begin{align*}
v_k = \begin{cases}
\varphi + d_k &\text{ in } B_r(x),\\
u_k &\text{ in } \R^n \sm B_r(x),
\end{cases} \quad 
v = \begin{cases}
\varphi  &\text{ in } B_r(x),\\
u &\text{ in } \R^n \sm B_r(x).
\end{cases}
\end{align*}
By using the equation we know that $Iv_k(x_k)\leq f_k(x_k)$.

For $z \in B_{r/2}(x)$ we have by ellipticity,
\begin{align*}
|Iv_k(z) - Iv(z)| &\leq \max(|\cM^+_\cL(v_k-v)|,|\cM^+_\cL(v_k-v)|),\\
&\leq \sup_{L\in\cL} |L(v_k-v)(z)|.
\end{align*}
For a given $L\in\cL$ with kernel $K$ we have,
\begin{align*}
|L(v_k-v)(z)| \leq \int_{\R^n\sm B_{r/2}}|(v_k-v)(x+y)|K(y)dy.
\end{align*}
The integrand goes to zero a.e. when $k\to\8$ and it is dominated by
\[
(\|v_k\|_\8 + \|v\|_\8)K(y)\chi_{\R^n\sm B_{r/2}} \in L^1.
\]
Then by dominated convergence $|L(v_k-v)(z)| \to 0$ as $k\to\8$ uniformly in $z \in B_{r/2}(x)$ and $L \in \cL$. This implies that $|Iv_k(z) - Iv(z)|$ also goes to zero uniformly in $z \in B_{r/2}(x)$.

Finally, using that $Iv$ is continuous in $B_{r/2}(x)$,
\begin{align*}
|Iv_k(x_k) - Iv(x)| &\leq |Iv_k(x_k) - Iv(x_k)| + |Iv(x_k) - Iv(x)| \to 0.
\end{align*}
Finallt we have
\begin{align*}
Iv(x) &\leq Iv_k(x_k) + |Iv_k(x_k) - Iv(x)|,\\
&\leq f(x_k) + |Iv_k(x_k) - Iv(x)|,\\
&\leq f(x) + |Iv_k(x_k) - Iv(x)| + |f_k(x_k) - f(x)|.\\
\end{align*}
Take $k\to\8$ and use also that $f_k\to f$ locally uniformly to conclude. 
\end{proof}


\subsection{Comparison and maximum principle for viscosity solutions.}

Lemma \ref{difofsol} says that the difference of two viscosity solutions is the solution of an equation in the same ellipticity class. Theorem \ref{comparisonprinciple} is the comparison principle which implies in particular the maximum principle for sub solution. Instead of having to prove an ABP type result, as it is used in chapter 5 of \cite{CC}, we take advantage of Lemma \ref{welldefined} in order to evaluate the operators in the classical sense whenever is needed.

\begin{lemma}\label{difofsol}
Let $I$ be an elliptic operator of the inf-sup type as in \eqref{eqinfsup} with all the linear operators in $\tilde\cL_0$ satisfying H1. Let $u$ and $v$ two bounded functions such that $Iu \geq f$ and $Iv \leq g$ in the viscosity sense in $\W$. Then $\cM^+_{\tilde{\cL}_0}(u-v) \geq f-g$ in the viscosity sense in $\W$.
\end{lemma}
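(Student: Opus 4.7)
The approach is to regularize $u$ from above and $v$ from below by sup- and inf-convolutions, establish the Pucci-type inequality in a punctual almost-everywhere sense for the difference, and then pass to the limit via the stability Lemma \ref{converge}. Set
\[
u^\e(x) := \sup_{y \in \R^n}\left(u(y) - \frac{|x-y|^2}{\e}\right), \qquad v_\e(x) := \inf_{y \in \R^n}\left(v(y) + \frac{|x-y|^2}{\e}\right).
\]
Standard facts yield that $u^\e$ is semiconvex and $v_\e$ is semiconcave with semiconvexity constant $O(1/\e)$, that $u^\e \searrow u$ and $v_\e \nearrow v$ in the $\G$-sense and pointwise a.e., and that the optimizers $y^*(x)$, $y_*(x)$ lie within distance $C(\|u\|_\8,\|v\|_\8)\sqrt\e$ of $x$.

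The first key step is to show that on a subdomain $\W' \Subset \W$ and for $\e$ small enough, the regularizations remain viscosity solutions of perturbed equations $Iu^\e \geq f - \omega(\e)$ and $Iv_\e \leq g + \omega(\e)$, for some modulus $\omega(\e) \to 0$. This is done by the classical translation trick: any test function $\varphi$ touching $u^\e$ from above at $x_0$ yields the translated function $\tilde\varphi(y) := \varphi(y + x_0 - y^*) + |x_0-y^*|^2/\e$, which touches $u$ from above at $y^*$, so the viscosity inequality for $u$ can be applied there, and the error from translating back to $x_0$ is controlled by the moduli of continuity of $f$ and of the kernels of the operators in $\tilde{\cL}_0$, together with the integrability conditions \eqref{intconde} and \eqref{intcondo} satisfied uniformly by $\tilde{\cL}_0$. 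The analogous statement holds for $v_\e$.

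By Alexandrov's theorem $u^\e$ and $v_\e$ are twice differentiable almost everywhere in $\W'$, so both belong to $C^{1,1}(x)$ at a.e.\ $x$. By Lemma \ref{welldefined} the operators $Iu^\e(x)$ and $Iv_\e(x)$ are then defined in the classical sense there, and ellipticity of $I$ with respect to $\tilde{\cL}_0$ combined with the previous step yields
\[
\cM^+_{\tilde{\cL}_0}(u^\e - v_\e)(x) \;\geq\; Iu^\e(x) - Iv_\e(x) \;\geq\; f(x) - g(x) - 2\omega(\e)
\]
at almost every $x \in \W'$. Because $u^\e - v_\e$ is semiconvex as a sum of two semiconvex functions, a Jensen-type perturbation argument upgrades this a.e.\ pointwise inequality to the viscosity sense: any smooth $\varphi$ touching $u^\e - v_\e$ from above at some $x_0$ can be perturbed by small affine functions to produce a positive-measure family of contact points, at almost all of which the classical inequality applies, and continuity of $\cM^+_{\tilde{\cL}_0}$ on $C^{1,1}$ test functions transfers the bound to $x_0$.

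Finally, letting $\e \to 0$ and invoking the stability Lemma \ref{converge}, which applies since $u^\e - v_\e \to u - v$ in the $\G$-sense and a.e., one concludes that $\cM^+_{\tilde{\cL}_0}(u - v) \geq f - g$ in the viscosity sense in $\W$. The main obstacle is the first step: making precise the quantitative loss $\omega(\e)$ when transporting the viscosity inequality through the sup-convolution, which requires careful control of the nonlocal tails generated by the kernels of the linear operators $L_{\a,\b}$ appearing in the inf-sup representation of $I$, and where the uniform integrability of the class $\tilde{\cL}_0$ plays an essential role.
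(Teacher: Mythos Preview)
Your overall architecture matches the paper's: regularize by sup/inf convolutions, transfer the viscosity inequalities to $u^\e$ and $v_\e$, derive the Pucci inequality for $u^\e - v_\e$, and pass to the limit via Lemma \ref{converge}. The step you flag as ``the main obstacle'' is exactly Lemma \ref{eqnforsup}, and is in fact short: the kernels in $\tilde\cL_0$ do not depend on $x$, so the sup-convolution simply translates the test function and the only loss is the modulus of continuity of $f$; no nonlocal tail estimate is needed.

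Where you diverge is in the passage to the viscosity inequality for $u^\e - v_\e$. You go through Alexandrov's theorem to get $u^\e,v_\e \in C^{1,1}(x)$ at a.e.\ $x$, deduce the classical inequality a.e., and then run a Jensen-type perturbation to upgrade. The paper avoids this detour with a direct observation: if $\varphi \in C^{1,1}(x_0)$ touches $u^\e - v_\e$ from above at $x_0$, then since $v_\e$ is semiconcave there is a paraboloid $q$ with $v_\e \leq q$ and equality at $x_0$, so $\varphi + q$ touches $u^\e$ from above; combined with the paraboloid from below coming from semiconvexity of $u^\e$, this forces $u^\e \in C^{1,1}(x_0)$. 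Symmetrically $v_\e \in C^{1,1}(x_0)$. Hence Lemma \ref{welldefined} applies \emph{at $x_0$ itself}, and the ellipticity inequality $\cM^+_{\tilde\cL_0}(u^\e - v_\e)(x_0) \geq Iu^\e(x_0) - Iv_\e(x_0)$ holds classically there, which is exactly the viscosity requirement. Your route works, but it obscures the point that semiconvexity plus a $C^{1,1}$ cap already yields $C^{1,1}$ regularity at the contact point, so neither Alexandrov nor Jensen is needed.

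You also omit the paper's second step: the argument above requires $u$ upper semicontinuous and $v$ lower semicontinuous in all of $\R^n$ (to define $u^\e$, $v_\e$ globally), whereas the viscosity definition only guarantees this in $\bar\Omega$. The paper handles this by mollifying $u$ and $v$ outside $\bar\Omega$, showing via ellipticity that the resulting error on the right-hand side is a continuous function tending to zero locally uniformly, and invoking Lemma \ref{converge} once more.
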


The proof is straightforward when either $u$ or $v$ is smooth because of the non negativity of the kernels. In the general case we proceed by regularizing the functions by their inf or sup convolutions.

\begin{definition}
Given an lower (upper) semi continuous function $u$ and a parameter $\e>0$ the inf (sup) convolution $u_\e$ ($u^\e$) is given by
\[
u_\e(x) = \inf_y u(x+y) + \frac{|y|^2}{\e} \quad \1u^\e(x) = \sup_y u(x+y) - \frac{|y|^2}{\e}\2.
\]
\end{definition}

The proof of the following property con be found for instance in the beginning of chapter 5 in \cite{CC}.

\begin{lemma}
If $u$ is bounded and lower semicontinuous in $\R^n$ then $u_\e$ $\G$-converges to $u$.
\end{lemma}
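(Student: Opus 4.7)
The plan is to verify directly the two defining conditions of $\Gamma$-convergence given in the definition just above the lemma, specialized to the family $\{u_\e\}_{\e>0}$ as $\e \to 0^+$. I will use throughout that $u_\e \le u$ pointwise (obtained by choosing $y=0$ in the infimum) and that the infimum is essentially attained by a nearly minimizing $y$, which can be controlled thanks to the boundedness of $u$.

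First I would dispatch the liminf condition (i). Fix a sequence $\e_k \to 0^+$ and $x_k \to x$ in $\Omega$. For each $k$ pick $y_k$ such that
\[
u_{\e_k}(x_k) \;\geq\; u(x_k + y_k) + \frac{|y_k|^2}{\e_k} - \frac{1}{k}.
\]
Since $u$ is bounded, $u_{\e_k}(x_k)$ is bounded too, hence $|y_k|^2/\e_k$ stays bounded; therefore $|y_k| = O(\sqrt{\e_k}) \to 0$, so $x_k + y_k \to x$. Applying lower semicontinuity of $u$ gives
\[
\liminf_{k\to\infty} u_{\e_k}(x_k) \;\geq\; \liminf_{k\to\infty} u(x_k+y_k) \;\geq\; u(x),
\]
which is exactly condition (i).

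For the limsup condition (ii), I would take the constant recovery sequence $x_k = x$. Then $u_{\e_k}(x) \le u(x)$ for every $k$, so $\limsup_k u_{\e_k}(x) \le u(x)$. Combined with the lower bound already established in step (i) applied to this constant sequence, one actually gets $\lim_k u_{\e_k}(x) = u(x)$, which certainly satisfies the required condition $\limsup_k u_{\e_k}(x_k) = u(x)$.

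The only subtle point, and the one I would flag as the main issue to handle carefully, is ensuring $y_k \to 0$; this is where the boundedness hypothesis on $u$ is essential (without it the near-minimizers could escape to infinity and lower semicontinuity would not apply at the limit point $x$). Apart from that, the argument is a direct unpacking of the definitions, and no compactness beyond the elementary estimate $|y_k|^2 \le \e_k(2\|u\|_\infty + 1/k)$ is needed.
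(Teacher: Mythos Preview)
Your proof is correct. The paper does not actually prove this lemma; it simply refers the reader to the beginning of Chapter~5 in \cite{CC}, so there is no in-paper argument to compare against. Your direct verification of the two $\Gamma$-convergence conditions---using a near-minimizer $y_k$ with $|y_k|^2 \leq \e_k(2\|u\|_\infty + 1/k)$ to force $x_k+y_k\to x$ for condition~(i), and the constant recovery sequence $x_k=x$ together with $u_\e\le u$ for condition~(ii)---is exactly the standard argument one finds in that reference.
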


\begin{lemma}\label{eqnforsup}
If $f$ is a continuous function and $I$ is elliptic with respect to a class $\cL$ with non negative kernels. Then if $Iu \leq f$ in the viscosity sense, $Iu_\e \leq f - d_\e$ also in the viscosity sense, where $d_\e \to 0$ as $\e \to 0$ depending only on the modulus of continuity $\rho$ of $f$.
\end{lemma}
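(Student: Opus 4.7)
The plan is to pull back a test function touching $u_\e$ from below at $x_0$ to a test function touching $u$ from below at a nearby translated point, apply the viscosity inequality for $u$ there, and transport the conclusion back using the monotonicity Lemma \ref{mono} together with the translation invariance of $I$ (which is implicit since the kernels in the class $\cL$ depend only on $y$, and adding a constant does not affect integro-differential operators).

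Concretely, suppose $\varphi$ touches $u_\e$ from below at $x_0$ in a neighborhood $N$, and set $v=\varphi$ in $N$, $v=u_\e$ outside. Since $u$ is bounded and lower semicontinuous, the infimum defining $u_\e(x_0)$ is attained at some $h_0$, i.e. $u_\e(x_0) = u(x_0+h_0) + |h_0|^2/\e$. Comparing with the choice $y=0$ in the infimum yields $|h_0|^2/\e \leq 2\|u\|_\8$, hence $|h_0|\leq r(\e):=\sqrt{2\e\|u\|_\8}\to 0$ as $\e\to 0$. Setting $z_0=x_0+h_0$ and $\psi(z) := \varphi(z-h_0) - |h_0|^2/\e$, the chain of inequalities $\varphi(x)\leq u_\e(x) \leq u(x+h_0)+|h_0|^2/\e$ for $x\in N$ (with equality at $x_0$) says exactly that $\psi$ touches $u$ from below at $z_0$ in $N+h_0$.

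Applying $Iu\leq f$ yields $I\tilde v(z_0)\leq f(z_0)$, where $\tilde v$ is $\psi$ on $N+h_0$ and $u$ outside. Now translate: set $\tilde v'(x) := \tilde v(x+h_0) + |h_0|^2/\e$. One checks that $\tilde v' = \varphi = v$ on $N$, and for $x\notin N$ the definition of $u_\e$ gives $\tilde v'(x) = u(x+h_0)+|h_0|^2/\e \geq u_\e(x) = v(x)$, with equality at $x_0$ by the choice of $h_0$. Since both $\tilde v'$ and $v$ coincide with $\varphi\in C^{1,1}(x_0)$ on a neighborhood of $x_0$, Lemma \ref{mono} applies and gives $Iv(x_0)\leq I\tilde v'(x_0)$. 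Translation invariance of $I$ (and the fact that the integro-differential operators in $\cL$ are unchanged by adding a constant) then yields $I\tilde v'(x_0) = I\tilde v(z_0)\leq f(z_0)$. Finally, continuity of $f$ gives $f(z_0)\leq f(x_0)+\rho(r(\e))$, so one may take $d_\e := -\rho(r(\e))\to 0$ depending only on $\rho$ and $\|u\|_\8$.

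The main technical obstacle is the bookkeeping: verifying cleanly that $\tilde v'\geq v$ globally with equality at $x_0$ and that both functions are $C^{1,1}(x_0)$ (so Lemma \ref{mono} applies as stated), together with making transparent the use of translation invariance to identify $I\tilde v'(x_0)$ with $I\tilde v(z_0)$. Once these pieces are in place, the argument is short and entirely mechanical.
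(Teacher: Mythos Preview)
Your proof is correct and follows essentially the same approach as the paper: both transport the test function from $u_\e$ to $u$ via the minimizer $h_0$ of the inf-convolution, apply the viscosity inequality for $u$ at the translated point, and then use the monotonicity Lemma \ref{mono} together with the inequality $u_\e(\cdot)\leq u(\cdot+h_0)+|h_0|^2/\e$ to pass back to $u_\e$. Your write-up is in fact more careful than the paper's in one respect: you make explicit the use of translation invariance of $I$ (which the paper tacitly assumes, since all its classes $\cL$ have kernels depending only on $y$), and you record the dependence of $d_\e$ on $\|u\|_\infty$ through $r(\e)=\sqrt{2\e\|u\|_\infty}$, which the paper's statement suppresses.
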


\begin{proof}
Let $\varphi$ be a test function that touches $u_\e$ by below at $x$ in $N$.

For $\e$ sufficiently small, there is some $(x+h) \in N$ such that $u_\e(y) \leq u(y+h) + |h|^2/\e$ with equality at $y=x$. (See the beginning of chapter 5 in \cite{CC}).

Then $\varphi - |h|^2/\e$ touches $u$ at $x+h$ in $N$ and $Iv(x+h) \leq f(x+h) \leq f(x) + \rho(|h|)$ for
\[
v = \begin{cases}
\varphi - \frac{|h|^2}{\e} &\text{ in } B_{r/2}(x+h),\\
u &\text{ in } \R^n \sm B_{r/2}(x+h).
\end{cases}
\]

By ellipticity the value of $Iv$ does not change by adding a constant, $I(v+ |h|^2/\e)(x) \leq f(x) + \rho(|h|)$. Then by the monotonicity Lemma \ref{mono} we also have that $Iw(x) \leq f(x) + \rho(|h|)$ for
\[
w = \begin{cases}
\varphi &\text{ in } N,\\
u_\e  &\text{ in } \R^n \sm N,
\end{cases}
\]
because as we already noticed $u_\e(y) \leq u(y+h) + |h|^2/\e$. This concludes the proof.
\end{proof}

\begin{proof}[Proof of Lemma \ref{difofsol}]
Assume first that $u$ is upper semicontinuous in $\R^n$ and $v$ is lower semicontinuous in $\R^n$.

Thanks to Lemma \ref{eqnforsup}, we have that $Iu^\e\geq f-d_\e$ and $Iv_\e\leq f+d_\e$ with $-u^\e\to-u$ and $v_\e\to v$ in the $\G$-sense and $d_\e\to0$. By the stability of viscosity solutions, Lemma \ref{converge}, we just need to prove that $\cM^{+}_{\tilde{\cL}_0}(u^\e-v_\e)\geq f-g-2d_\e$ in $\W$ in the viscosity sense.

Let $\varphi$ be a test function touching $u^\e-v_\e$ from above at a point $x$. For any $\e>0$, $u^\e$, $-v_\e$ and $u^\e-v_\e$ are semiconvex functions, hence there is a paraboloid for each of them touching then from below at $x$. If $\varphi \in C^{1,1}(x)$ then both $u^\e$ and $v_\e$ are also $C^{1,1}(x)$. By Lemma \ref{welldefined} we can evaluate $Iu^\e(x)$, $Iv_\e(x)$ and $\cM^+_{\tilde{\cL}_0}(u^\e-v_\e)(x)$ in the classical sense and they satisfy
\[
\cM^+_{\tilde{\cL}_0}(u^\e-v_\e)(x)\geq Iu^\e(x)-Iv_\e(x)\geq f(x)-g(x)-2d_\e.
\]
Since $\varphi$ touches $u^\e-v_\e$ from above at $x$
\[
\cM^+_{\tilde{\cL}_0}\varphi(x)\geq f(x)-g(x)-2d_\e.
\]
This says that $\cM^+_{\tilde{\cL}_0}(u^\e-v_\e)\geq f-g-2d_\e$ in the viscosity sense and completes the proof under the semicontinuity assumptions in $\R^n$.

Now we will not assume the lower and upper semincontinuity outside of $\bar\W$. There are sequences $u_k$ and $v_k$, upper and lower semicontinuous respectively such that
\begin{itemize}
\item[(i)] $u_k=u$ and $v_k=v$ in $\bar\Omega$ for every $k$,
\item[(ii)] $u_k\to u$ and $v_k\to v$ a.e. in $\R^n\sm\bar\Omega$,
\item[(iii)] $Iu_k\geq f_k$ and $Iv_k\leq g_k$, with $f_k\to f$, $g_k\to g$ locally uniformly in $\Omega$.
\end{itemize}
By having such sequences we just have to apply the first part of this proof and the stability, Lemma \ref{converge}, to conclude the proof.

We can construct the sequences satisfying the first two items above by doing a standard mollification of $u$ and $v$ away from $\W$ and then filling the gap in a semicontinuous way. The function $u_k-u$ vanishes in $\Omega$, hence $\cM^-_{\tilde{\cL}_0}(u_k-u)$ is defined in the classical sense in $\W$ and
\begin{align*}
\cM^-_{\tilde{\cL}_0}(u_k-u)(x) &\geq -\int_{\R^n\sm B_{dist(x,\partial \Omega)}}|u_k(x+y)-u(x)|K(y)dy\\
&= h_k(x),
\end{align*}
where $K=\L\frac{2-\s}{|y|^{n+\s}}+b\frac{1-\t}{|y|^{n+\t}}$.	

The functions $h_k(x)$ are continuous in $\W$ and by dominated convergence $h_k\to 0$ locally uniformly in $\W$ as $k\to0$. Let $\varphi \in C^{1,1}(x)$ touching $u_k$ from above at $x$ in $N$ and $v_k$ defined by
\begin{align*}
v_k = \begin{cases}
\varphi &\text{ in $N$,}\\
u_k &\text{ in $\R^n\sm N$.}
\end{cases}
\end{align*}
The functions $v_k + u - u_k$ are also in $C^{1,1}(x)$ and touch $u$ by above at $x$. By Lemma \ref{testfunc} we have that $I(v_k + u - u_k)(x) \geq f(x)$. By ellipticity
\begin{align*}
Iv_k(x) &\geq I(v_k + u - u_k)(x) + \cM^-_{\tilde{\cL}_0}(u-u_k)(x),\\
&\geq f(x) + h_k(x).
\end{align*}
So we have that (iii) above is also satisfied.
\end{proof}

\begin{theorem}[Comparison Principle]\label{comparisonprinciple}
Let $I$ be an elliptic operator of the inf-sup type as in \eqref{eqinfsup} with all the linear operators in $\tilde\cL_0$ satisfying H1. Let $\W$ be a bounded open set and $u$, $v$ two bounded functions such that
\begin{itemize}
\item[(i)] $Iu\geq f$ and $Iv\leq f$ in $\W$ in the viscosity sense for some $f \in C(\W)$,
\item[(ii)] $u\leq v$ in $\R^n\sm\W$.
\end{itemize}
Then $u\leq v$ in $\Omega$.
\end{theorem}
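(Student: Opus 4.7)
The plan is to combine Lemma~\ref{difofsol} with a direct computation of the extremal operator at an interior maximum. By Lemma~\ref{difofsol}, $w := u - v$ is a viscosity subsolution of $\cM^+_{\tilde\cL_0} w = 0$ in $\W$; the goal is to show $w \leq 0$ in $\W$.

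I would argue by contradiction: suppose $M := \sup_{\R^n} w > 0$. Since $u$ is upper semicontinuous, $v$ lower semicontinuous, and $w \leq 0$ on $\R^n \sm \W$, the supremum is attained at some interior point $x_0 \in \W$ with $w(x_0) = M$. For $r > 0$ small enough that $\overline{B_r(x_0)} \ss \W$ and for any $\d > 0$, the quadratic $\varphi_\d(x) := M + \d|x-x_0|^2$ touches $w$ strictly from above inside $B_r(x_0)$, because $w \leq M$ throughout $\W$. Writing $V$ for the function that equals $\varphi_\d$ on $B_r(x_0)$ and $w$ outside, the viscosity subsolution property yields $\cM^+_{\tilde\cL_0} V(x_0) \geq 0$, and this inequality holds in the classical sense since $V \in C^{1,1}(x_0)\cap L^\8$.

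The contradiction comes from showing $\cM^+_{\tilde\cL_0} V(x_0) < 0$. For any $L \in \tilde\cL_0$ with nonnegative total kernel $K = K_e + K_o$, using $\nabla V(x_0) = 0$,
\[
LV(x_0) = \int_{B_r} \d|y|^2 K(y)\,dy + \int_{\R^n \sm B_r}(w(x_0 + y) - M) K(y)\,dy.
\]
The near-origin integral is $O(\d)$ uniformly in $L$ by \eqref{kernel1} and \eqref{kernel2}. The tail integrand is everywhere nonpositive, since $w \leq M$ and $K \geq 0$; and for $|y| > R_1 := \mathrm{diam}(\W)$ we have $x_0 + y \notin \W$, so $w(x_0+y) \leq 0$ and the tail integrand is bounded above by $-M K(y)$. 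Using the symmetry of $\{|y| > R_1\}$ to annihilate the odd part,
\[
\int_{|y| > R_1} K\,dy = \int_{|y| > R_1} K_e\,dy \geq c_1 > 0,
\]
uniformly in $L \in \tilde\cL_0$, with $c_1$ depending only on $n, \l, \s$, and $R_1$. Hence $LV(x_0) \leq \d C(r) - c_1 M$; letting $\d \to 0$ forces $\cM^+_{\tilde\cL_0} V(x_0) < 0$, contradicting the subsolution inequality and giving $w \leq 0$ in $\W$.

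The main subtlety I anticipate is handling the odd, drift-like part of each kernel. Working in the sub-class $\tilde\cL_0$ is essential: the hypothesis that the \emph{total} kernel $K$ is nonnegative gives a definite sign for the tail integrand, while the symmetry of the far region kills $\int K_o\,dy$ and preserves the coercive lower bound coming from $K_e$. Without this structure the odd part could overwhelm the negative contribution coming from the boundary jump of $w$, and one would likely need an additional smallness condition such as H3 to close the estimate.
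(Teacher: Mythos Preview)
Your argument is correct and genuinely different from the paper's. The paper also reduces to $\cM^+_{\tilde\cL_0}w\geq 0$ via Lemma~\ref{difofsol}, but then proceeds by constructing a global barrier: it takes $\varphi(x)=\min(1,|x|^2/4)$, scales it so that $\cM^-_{\cL_0}\varphi_s\geq\d$ on $B_1$, and sets $\psi_\e=M+\e(1-\varphi_s(\cdot/R))$, which is a strict supersolution on $\W$. A sliding argument (if $\psi_\e-w$ became negative, a translate of $\psi_\e$ would touch $w$ from above at an interior point, contradicting Lemma~\ref{testfunc}) then gives $w\leq\psi_\e$, and $\e\to0$ finishes.

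Your route is more direct and exploits the nonlocality in a way the barrier argument does not: at an interior maximum, the operator ``sees'' the exterior where $w\leq 0$, and the tail integral over $\{|y|>R_1\}$ against the nonnegative total kernel $K$ produces a strictly negative contribution $-Mc_1$ that no local second-order operator would supply. This is shorter and avoids building $\varphi_s$; on the other hand, the paper's barrier approach does not rely on attainment of the interior maximum (your use of compactness of $\bar\W$ together with upper semicontinuity of $u$ and lower semicontinuity of $v$ on $\bar\W$ is what makes that step go through), and its barrier lemma is reused later for the boundary behaviour in the existence theorem. Your remark that the argument leans on $K\geq 0$ (i.e.\ on working in $\tilde\cL_0$ rather than $\cL_0$) is exactly right: without it the tail integrand has no sign and the odd part could spoil the estimate.
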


Here, as in \cite{C1}, the proof is also based on using a barrier function as
\[
 \varphi(x) = \min(1,|x|^2/4).
\]

\begin{lemma}
Let $s \in (0,1)$ and $\varphi_s = \varphi(sx)$ for $\varphi$ defined above. There exists $\d>0$ and some $s$ small enough such that,
\[
\cM^-_{\cL_0}\varphi_s \geq \d \text{ in $B_1$}.
\]
\end{lemma}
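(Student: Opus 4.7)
The plan is to decompose $\cM^-_{\cL_0}\varphi_s = \cM^-_\s\varphi_s - b|D_\t|\varphi_s$ and use the scaling identities $\cM^-_\s\varphi_s(x) = s^\s(\cM^-_\s\varphi)(sx)$ and $|D_\t|\varphi_s(x) = s^\t(|D_\t|\varphi)(sx)$ to reduce the question to pointwise estimates at $z = sx \in B_s$ with $|x|\le1$ and $s \in (0,1)$.

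For the even part I would exploit that $\varphi$ is convex in $B_2$ and equal to $1$ outside. A short case analysis on whether $z\pm y$ belong to $B_2$ shows $\d_e(\varphi, z; y)\ge 0$ for every $|z|\le 1$ and every $y$, with the exact identity $\d_e(\varphi, z; y)=|y|^2/2$ when $|y|\le 1$. Integrating only over $B_1$ gives
\[
\cM^-_\s\varphi(z) \ge (2-\s)\l\int_{B_1}\frac{|y|^2/2}{|y|^{n+\s}}\,dy = \frac{\l|S^{n-1}|}{2} =: c_0,
\]
a positive constant uniform in $\s$ since the $(2-\s)$ prefactor exactly cancels the $(2-\s)^{-1}$ coming from the radial integral. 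Contributions from $|y|>1$ are non-negative by the case analysis and may be dropped.

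For the odd part, evenness of $\varphi$ forces $\d_o(\varphi, 0; y)\equiv0$, and I would show the pointwise bound $|\d_o(\varphi, z; y)|\le|z||y|$ for every $y$ by case analysis: in the quadratic region $\d_o(\varphi,z;y)=z\cdot y$ exactly; in the transition region the elementary inequality $1-|z+y|^2/4\le -z\cdot y$ (valid whenever $|z+y|\le 2\le |z-y|$) yields the same bound; and $\d_o$ vanishes for $|y|>2+|z|$. Together with the $(1-\t)$ factor cancelling the blow-up of $\int_0^{2+|z|}r^{-\t}\,dr$, this produces $|D_\t|\varphi(z)\le C|z|$ with $C$ depending only on the dimension.

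Combining at $z=sx$ with $|x|\le1$ and invoking hypothesis H3 in the form $b\le\l A_0(2-\s)/(1-\t)$ gives
\[
\cM^-_{\cL_0}\varphi_s(x) \ge \l s^\s\!\left[\frac{|S^{n-1}|}{2} - C A_0\,\frac{2-\s}{1-\t}\,s^{\,1-(\s-\t)}\right]\!.
\]
The main obstacle is to choose $s\in(0,1)$, depending only on the universal parameters $\s_0,\t_0,m,A_0$, so that the bracketed expression stays bounded below by a positive constant. When $\s-\t<1$ the exponent $1-(\s-\t)$ is positive and any sufficiently small $s$ works; when $\s-\t\ge1$ one instead exploits that $(2-\s)/(1-\t)\le 1$ in that regime and selects $s$ close to $1$. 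In either case $\d := c_0 s^\s/2$ yields the conclusion.
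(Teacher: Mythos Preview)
Your outline is the paper's: decompose $\cM^-_{\cL_0}\varphi_s=\cM^-_\s\varphi_s-b|D_\t|\varphi_s$, bound each piece at $z=sx$, and exploit the different scalings. The paper, however, stops at the crude bound $|D_\t|\varphi\le\d_3$ (a constant on $B_1$) and then appeals to the scaling formulas in the form $\cM^-_\s\varphi_s(x)=s^{-\s}(\cM^-_\s\varphi)(sx)$ and $|D_\t|\varphi_s(x)=s^{-\t}(|D_\t|\varphi)(sx)$; with those exponents the even term is the dominant one as $s\to0$ and the lemma follows in one line. You instead take the exponents $s^{\s},s^{\t}$ (consistent with the way the scaling is actually applied later in Section~\ref{PE}), which makes the even term the \emph{smaller} of the two, so the constant bound on $|D_\t|\varphi$ no longer closes the argument. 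Your refinement $|\d_o(\varphi,z;y)|\le|z|\,|y|$, obtained from the radial symmetry of $\varphi$, is exactly what buys back the missing power of $s$ and restores the conclusion when $\s-\t<1$; this is a genuine addition beyond what the paper writes.

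There is, however, a gap in your treatment of the regime $\s-\t\ge1$. You invoke H3 together with the (correct) observation $(2-\s)/(1-\t)\le1$ to get $b\le\l A_0$, and then propose taking $s$ close to $1$. But substituting into your displayed bracket gives $|S^{n-1}|/2-CA_0\,s^{1-(\s-\t)}$ with $s^{1-(\s-\t)}\ge1$ for every $s\in(0,1]$, so positivity would force $A_0<|S^{n-1}|/(2C)$, a purely dimensional smallness condition on $A_0$ that is nowhere assumed (and the lemma as stated does not even assume H3). Your scaling argument as written therefore only proves the lemma in the range $\s-\t<1$; the remaining range needs a different idea, not just a different choice of $s$.
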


\begin{proof}
First take $s=1$ to get that
\[
 \cM^-_\s \varphi \geq \d_1 \text{ in $B_1$}.
\]
This inequality comes from the fact that $\d_e(\varphi,x;y) \geq \d_2$ for $x\in B_1$ and every $y$. In fact, if $x\pm y$ are both in $B_2$ or both outside $B_2$ it is immediate. If only $x+y$ is in $B_2$ then we use that, $\varphi(x) \leq 1/4$ for $x\in B_1$,
\[
\d_e(\varphi,x;y) = 1 + \varphi(x+y) - 2\varphi(x) \geq 1/2.
\]

On the other hand, since $\varphi$ is smooth we have that $|D_\t|\varphi \leq \d_3$ in $B_1$, for some finite $\d_3>0$. Now recall scaling properties from Section \ref{VSP}. We have that
\begin{align*}
 \cM_\s^-\varphi_s(x) &= s^{-\s}(\cM_\s^-\varphi)(sx) \geq s^{-\s}\d_1,\\
 |D_\t|\varphi_s(x) &= s^{-\t}(|D_\t|\varphi)(sx) \leq s^{-\t}\d_3.
\end{align*}
Which implies
\begin{align*}
 \cM_{\cL_0}^-\varphi_s &\geq s^{-\s}(\d_1 - s^{\t-\s}\d_3),\\
&\geq \d.
\end{align*}
For $s$ small enough.
\end{proof}

\begin{proof}[Proof of Theorem \ref{comparisonprinciple}]
By Lemma \ref{difofsol} we know that for $w=u-v$, $\cM^+_{\tilde\cL_0}w \geq 0$ in the viscosity sense in $\W$. We will proof from here that $\sup_{\W} w \leq \sup_{\R^n\sm\W}w := M$.

Let $\W \ss B_R$ ($R\geq 1$) and take $\psi(x) = \varphi_s(x/R)$ for $\varphi_s$ as in the previous lemma. Since $\tilde\cL_0 \ss \cL_0$,
\begin{align*}
\cM^-_{\tilde\cL_0}\psi &\geq \cM^-_{\cL_0}\psi,\\
 &\geq R^{-\s}\1\cM^-_{\cL_0}\varphi_s\2(x/R),\\
 &\geq R^{-\s}\d,
\end{align*}
in $\W$. Fix $\e>0$ and consider
\[
 \psi_\e(x) = M + \e(1- \psi(x)),
\]
which satisfies $\cM^+_{\tilde\cL_0}\psi_\e \leq -\e R^{-\s}\d < 0$ in $\W$.

If $\inf (\psi_\e-w)<0$ then there is some translation $\psi_\e+d$ such that $\psi_\e+d$ touches $w$ by above in $x\in\W$. This can not happen because of Lemma \ref{testfunc} which says that in that case $\cM_{\tilde\cL_0}^+ (\psi_\e+d)(x) = \cM_{\tilde\cL_0}^+ \psi_\e(x)\geq 0$. Therefore $\psi_\e \geq w$ and by letting $\e\to0$ we get to the conclusion of the Theorem.
\end{proof}


\subsection{Existence of solutions for the Dirichlet problem.}

\begin{theorem}\label{exist}
Let $I$ be an elliptic operator of the inf-sup type as in \eqref{eqinfsup} with all the linear operators in $\tilde\cL_0$ satisfying H1. Let $\W\subset\R^n$ be an open bounded set satisfying the exterior ball condition. Let $g:\R^n\sm\W\to\R$ be a function which is globally bounded and continuous on $\p\W$. Then there exist a viscosity solution $u \in C(\bar\W)$ of 
\begin{align*}
Iu(x)=0, \text{ in } \W,\\
u=g, \text{ in } \R^n\sm\W.
\end{align*}
\end{theorem}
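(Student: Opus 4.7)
The plan is to apply Perron's method. Set $M = \|g\|_\8$ and let $\mathcal{S}$ denote the class of bounded functions $w:\R^n\to\R$ such that $Iw\geq 0$ in $\W$ in the viscosity sense and $w\leq g$ on $\R^n\sm\W$. Since each linear operator $L$ of the form \eqref{eqgeneral} annihilates constants, $I(\text{const})=0$, so the constant $-M$ lies in $\mathcal{S}$; and by Theorem \ref{comparisonprinciple} applied with the constant supersolution $M$, every $w\in\mathcal{S}$ satisfies $w\leq M$ in $\W$. Define $u(x)=\sup_{w\in\mathcal{S}}w(x)$, so $-M\leq u\leq M$.

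The first task is to show that the upper semicontinuous envelope $u^*$ is a subsolution of $Iu=0$. This follows from the stability Lemma \ref{converge} (in its dual version for upper semicontinuous $\G$-limits) together with a max-of-subsolutions principle, which is the symmetric counterpart of Lemma \ref{minlema} proved by the same monotonicity argument. The second task is to show that the lower semicontinuous envelope $u_*$ is a supersolution. Suppose for contradiction that it is not: then there exist $x_0\in\W$ and a $C^{1,1}$ test function $\varphi$ touching $u_*$ from below in a neighborhood $N$ such that the extension $v=\varphi$ in $N$, $v=u_*$ outside, satisfies $Iv(x_0)>0$. Using the continuity of $I$ on $C^{1,1}$ functions together with the monotonicity Lemma \ref{mono}, one adds a small bump $\eta>0$ in a smaller ball $B\ss N$ to produce a modification $\tilde u$ that still lies in $\mathcal{S}$ yet strictly exceeds $u$ at a point near $x_0$, contradicting the maximality of $u$.

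For the boundary behavior, the exterior ball condition supplies, for each $x_0\in\p\W$, a ball $B_r(y_0)$ disjoint from $\W$ and tangent at $x_0$. Using the auxiliary function $\varphi_s$ from the barrier lemma preceding Theorem \ref{comparisonprinciple}, suitably rescaled and translated (composing with $x\mapsto(x-y_0)/R$ for $R$ large enough so that $\W\ss B_R(y_0)$), one obtains a smooth classical strict subsolution; its negation, added to a constant depending on $\e$ and $\|g\|_\8$, yields a bounded classical supersolution $\Psi_\e^+$ with $\Psi_\e^+\geq g$ on $\R^n\sm\W$ and $\Psi_\e^+(x_0)\leq g(x_0)+\e$. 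By Theorem \ref{comparisonprinciple}, $u\leq \Psi_\e^+$ in $\bar\W$, hence $u^*(x_0)\leq g(x_0)+\e$; analogously $u_*(x_0)\geq g(x_0)-\e$. Letting $\e\to 0$ gives $u^*(x_0)\leq g(x_0)\leq u_*(x_0)$ on $\p\W$. Applying Theorem \ref{comparisonprinciple} once more to the subsolution $u^*$ and supersolution $u_*$ (which agree on the boundary) yields $u^*\leq u_*$ in $\bar\W$; combined with the trivial $u_*\leq u\leq u^*$, this forces $u=u^*=u_*$, producing a continuous viscosity solution attaining the boundary values $g$.

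The main obstacle is the bump argument showing $u_*$ is a supersolution: because the operator is nonlocal, perturbing $u$ in a small ball changes $Iu$ at every nearby point, so the subsolution inequality must be checked both inside the bump (via continuity of $I$ on $C^{1,1}$ functions) and outside the bump (via the monotonicity Lemma \ref{mono}, which guarantees that pushing $u$ up can only increase $Iu$ elsewhere). A secondary difficulty lies in the barrier construction, where the scaling parameters $s$ and $R$ must be balanced using hypothesis H2, so that the lower-order $\t$-derivative remains dominated by the $\s$-part and the barrier $\Psi_\e^{\pm}$ is a genuine super/subsolution throughout $\W$, not merely in a small neighborhood of $x_0$.
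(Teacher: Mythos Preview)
Your overall strategy---Perron's method over the class of subsolutions, checking that $u^*$ is a subsolution via stability and that $u_*$ is a supersolution via a bump construction, then comparison with barriers at the boundary---matches the paper's proof, and your remarks on the nonlocal bump argument correspond to the paper's second preliminary lemma.

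The gap is in the boundary barrier. You propose using the function $\varphi_s(x)=\min(1,|sx|^2/4)$ from the lemma preceding Theorem~\ref{comparisonprinciple}, rescaled and centered at the exterior-ball center $y_0$. But that function satisfies $\cM^-_{\cL_0}\varphi_s\geq\delta$ in $B_1$, so its negation is a supersolution that is \emph{maximal at the center} and \emph{decreases} radially outward. A candidate $\Psi_\varepsilon^+=-A\,\varphi_s((x-y_0)/R)+\text{const}$ then has $\Psi_\varepsilon^+(x_0)\geq\Psi_\varepsilon^+(y)$ for $|y-y_0|$ large, so you cannot simultaneously enforce $\Psi_\varepsilon^+(x_0)\leq g(x_0)+\varepsilon$ and $\Psi_\varepsilon^+\geq g$ on all of $\R^n\setminus\Omega$, where $g$ may be as large as $\|g\|_\infty$. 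That paraboloid-type barrier is tailored to the global comparison argument, not to pinning a boundary value at a single point; in particular nothing in your construction uses the exterior tangent ball in an essential way.

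The paper instead invokes Lemma~\ref{barrier}, built from $\varphi(x)=\min(1,C(|x|-1)_+^{\alpha})$, which vanishes on $\bar B_1$, grows to $1$ outside, and satisfies $\cM^+_{\tilde\cL_0}\varphi\leq 0$ in $\R^n\setminus B_1$. Translating and dilating this to the exterior tangent ball $B_r(y_0)$ yields a supersolution equal to $g(x_0)+\varepsilon$ on a neighborhood of $x_0$ (where continuity of $g$ gives $g\leq g(x_0)+\varepsilon$) and at least $2\|g\|_\infty+g(x_0)$ far away---exactly the profile needed. Note also that Theorem~\ref{exist} assumes only H1, while your balancing of $s$ and $R$ explicitly calls on H2; the paper's barrier requires only H1.
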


The proof is based on the Perron's method. The first two lemmas account to the construction of a solution and the third one regards with achieving the boundary data.

\begin{lemma}
Let $I$ be an elliptic operator with respect to a class $\cL$ with non negative kernels and satisfying the integrability conditions \eqref{intconde} and \eqref{intcondo} uniformly. Let $S$ a set of viscosity solutions of $Iv \geq 0$ in $\W$. Then $\bar u$, the upper semicontinuous envelope in $\W$ of the function $u$ defined by
\[
 u(x) = \sup_{v\in S}v(x),
\]
also satisfies $I\bar u \geq 0$ in $\W$ in the viscosity sense.
\end{lemma}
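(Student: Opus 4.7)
The plan is a standard Perron-type argument adapted to the nonlocal setting, using monotonicity (Lemma \ref{mono}) to exploit non-negativity of the kernels and Lemma \ref{testfunc} to evaluate $I$ classically at the touching point. Let $\varphi$ be a second-order polynomial touching $\bar u$ from above at $x_0 \in \W$ in a neighborhood $N$. After adding a small quadratic bump of the form $\e |x-x_0|^2$ (and letting $\e \to 0$ at the end via the usual continuity of $I$ on $C^{1,1}$ data), we may assume the touching is strict on a closed ball $\bar B_r(x_0) \ss N$, that is $\varphi(x) - \bar u(x) > 0$ for every $x \in \bar B_r(x_0) \sm \{x_0\}$.

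By definition of the USC envelope there is a sequence $x_k \to x_0$ with $u(x_k) \to \bar u(x_0)$, and for each $k$ one can pick $v_k \in S$ with $v_k(x_k) \geq u(x_k) - 1/k$. Since $v_k \leq u \leq \bar u \leq \varphi$ in $N$ and $v_k$ is upper semicontinuous, the function $\varphi - v_k$ is non-negative and lower semicontinuous on $\bar B_r(x_0)$, hence attains a minimum $c_k \geq 0$ at some $y_k \in \bar B_r(x_0)$. The inequality $c_k \leq (\varphi - v_k)(x_k) \to 0$ forces $c_k \to 0$, and by the strict touching together with the upper bound $\limsup_k v_k(y_k) \leq \bar u(y_\8)$ along any subsequential limit $y_k \to y_\8$, one concludes $y_\8 = x_0$; in particular $y_k \in B_r(x_0)$ for $k$ large.

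Thus $\varphi - c_k$ touches $v_k$ from above at the interior point $y_k$. Applying the viscosity subsolution inequality to $v_k$ at $y_k$ (and using Lemma \ref{testfunc} to admit the $C^{1,1}$ test function $\varphi - c_k$), we obtain $I w_k(y_k) \geq 0$, where
\[
w_k = \begin{cases} \varphi - c_k & \text{in } B_\rho(y_k), \\ v_k & \text{in } \R^n \sm B_\rho(y_k), \end{cases}
\]
for some fixed small $\rho$ with $B_\rho(y_k) \ss N$. Because $v_k \leq \bar u$ pointwise and the kernels defining $I$ are non-negative, the monotonicity Lemma \ref{mono} gives $I \tilde w_k(y_k) \geq I w_k(y_k) \geq 0$, where $\tilde w_k$ is defined as $\varphi - c_k$ in $B_\rho(y_k)$ and as $\bar u$ outside.

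It remains to pass to the limit $k \to \8$. Define
\[
v = \begin{cases} \varphi & \text{in } B_\rho(x_0), \\ \bar u & \text{in } \R^n \sm B_\rho(x_0). \end{cases}
\]
Since $\tilde w_k$ and $v$ agree with translates of the smooth function $\varphi$ on balls centered at $y_k$ and $x_0$ respectively, and agree with the fixed bounded function $\bar u$ outside, the continuity of $L$ on $C^{1,1}$ data (as in the proof that operators in $\cL$ yield continuous $Iu$, cf.\ the lemma on equicontinuity of $L_{\a,\b} u$) together with dominated convergence on the tails (controlled by $\|\bar u\|_\8$ and the uniform integrability \eqref{intconde}, \eqref{intcondo}) gives $I \tilde w_k(y_k) \to I v(x_0)$. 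Hence $I v(x_0) \geq 0$, proving that $\bar u$ is a viscosity subsolution. I expect the main obstacle to be the last step: one must verify that the kernel integrals defining $I \tilde w_k(y_k)$ converge to those defining $Iv(x_0)$ uniformly in the family of linear operators, which is where uniform integrability of $\cL$ and Lemma \ref{testfunc} (to handle the inf--sup evaluation classically) play the crucial role.
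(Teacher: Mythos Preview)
Your approach is essentially the same as the paper's: the paper's proof simply extracts a sequence $(x_k,v_k)$ with $v_k(x_k)\to\bar u(x_0)$ and says ``the same proof as the stability Lemma~\ref{converge} applies here'', which amounts exactly to your steps of finding touching points $y_k\to x_0$ for $\varphi-c_k$ over $v_k$ and passing to the limit. Your explicit use of monotonicity (Lemma~\ref{mono}) to replace $v_k$ by $\bar u$ outside the small ball is a good addition; in the stability lemma the tail is handled by a.e.\ convergence $u_k\to u$, which is not available here, so the monotonicity step is precisely what makes the argument go through.

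There is, however, one concrete slip in your limiting step. You glue $\varphi-c_k$ to $\bar u$ on the \emph{moving} ball $B_\rho(y_k)$ and then claim $I\tilde w_k(y_k)\to Iv(x_0)$ via dominated convergence on the tails. But the tail contribution of $L\tilde w_k(y_k)$ is $\int_{|y|>\rho}\bar u(y_k+y)K(y)\,dy$, and comparing it to $\int_{|y|>\rho}\bar u(x_0+y)K(y)\,dy$ requires continuity of $\bar u$ (only USC is known) or regularity of $K$ (not assumed). The fix is trivial: once $y_k\in B_{\rho/2}(x_0)$, glue on the \emph{fixed} ball $B_\rho(x_0)$ instead. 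Then $\tilde w_k-v=-c_k\,\chi_{B_\rho(x_0)}$, so by ellipticity $|I\tilde w_k(y_k)-Iv(y_k)|\le c_k\sup_{L\in\cL}|L\chi_{B_\rho(x_0)}(y_k)|\le Cc_k\to0$ (the supremum is finite by the uniform integrability of $\cL$, since $\chi_{B_\rho(x_0)}$ is constant near $y_k$). Finally $Iv(y_k)\to Iv(x_0)$ by continuity of $Iv$ on the set where $v=\varphi$ is $C^{1,1}$. With this adjustment your sketch is complete and matches the paper's intended argument.
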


\begin{proof}
Let $\varphi$ be a test function touching $\bar u$ by above at $x \in \W$ in a neighborhood $N$.

The fact that $\bar u$ is defined as the upper semicontinuous envelope of $u(y) = sup_{v\in S} v(y)$ implies that for our given $x$ there exist a sequence $\{(x_k,v_k)\} \ss (N \cap \W) \times S$ such that
\begin{enumerate}
 \item[(i)] $(x_k,v(x_k)) \to (x,\bar u(x))$ as $k\to \8$,
 \item[(ii)] For any $y_k \to x$ we have that $\liminf_{k\to\8} v_k(y_k) \geq \bar u(x)$.
\end{enumerate}
These are the two sufficient conditions to prove the stability Lemma \ref{converge}. The same proof applies here to show that $I\varphi(x) \geq 0$ and then conclude that $Iu \geq 0$ in the viscosity sense.
\end{proof}

\begin{lemma}
Let $I$ be an elliptic operator with respect to a class $\cL$ with non negative kernels and satisfying the integrability conditions \eqref{intconde} and \eqref{intcondo} uniformly. Let $u$ be a viscosity subsolution of $Iu \geq 0$ in $\W$ such that $\underbar u$, its lower semicontinuous envelope, it is not a viscosity supersolution of $Iu \leq 0$. Then there is function $U$ such that
\begin{enumerate}
 \item[(i)] $U$ is a viscosity subsolution of $IU\geq 0$ in $\W$.
 \item[(ii)] $U=u$ in $\R^n \sm \W$.
 \item[(iii)] $\displaystyle\sup_{x\in\W} (U-u)(x) > 0$.
\end{enumerate}
\end{lemma}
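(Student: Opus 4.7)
My plan is the classical Perron bump construction. By hypothesis, there exist $x_0 \in \W$, a neighborhood $N \subset \W$, and a second order polynomial $\varphi$ touching $\underbar{u}$ strictly from below at $x_0$ in $N$, with $Iv_0(x_0) =: \delta > 0$ for $v_0 := \varphi$ in $N$ and $\underbar{u}$ outside. Since $u \geq \underbar{u}$, Lemma \ref{mono} lets me replace the outside extension by $u$; then shrinking $N$ to a ball $B_r(x_0) \subset N$ and using monotonicity again (exploiting $\varphi \leq u$ on $N$), I obtain $I\tilde v_0(x_0) \geq \delta$ for $\tilde v_0 := \varphi$ in $B_r$ and $u$ outside. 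Because $\underbar{u} - \varphi$ is lower semicontinuous and strictly positive on $\bar B_r \setminus \{x_0\}$, I pick $r_0 < r$ and $\eta_0 > 0$ with $\underbar{u} - \varphi \geq 2\eta_0$ on the compact annulus $\bar B_r \setminus B_{r_0}$.

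Fix $\eta \in (0, \eta_0)$ and set $U := \max(u, \varphi+\eta)$ on $B_r$, $U := u$ outside. The lifted region $E_\eta := \{y \in B_r : u(y) < \varphi(y) + \eta\}$ is open by upper semicontinuity of $u$, and lies inside $B_{r_0}$ by the choice of $\eta_0$. Hence $U \equiv u$ on $\partial B_r$ and outside, giving (ii), and $U$ is globally upper semicontinuous. Property (iii) follows from $\underbar{u}(x_0) = \liminf_{y \to x_0} u(y)$: choose $y_k \to x_0$ with $u(y_k) \to \underbar{u}(x_0) = \varphi(x_0)$, so $\varphi(y_k) + \eta > u(y_k)$ for $k$ large, giving $U(y_k) > u(y_k)$.

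For (i), let $\psi$ touch $U$ from above at $x_1 \in \W$. If $x_1 \notin E_\eta$, then $U(x_1) = u(x_1)$ and $\psi$ also touches $u$ from above at $x_1$; the subsolution property of $u$ together with Lemma \ref{mono} (using $U \geq u$) gives the required inequality for $U$. If $x_1 \in E_\eta$, then $U$ coincides with $\varphi + \eta$ in a neighborhood of $x_1$, and two successive applications of Lemma \ref{mono} reduce the estimate to showing $Iv_\eta(x_1) \geq 0$, where $v_\eta := \varphi + \eta$ on $B_r$ and $u$ outside.

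The hard step is this last inequality, which I would approach by comparing to $I\tilde v_0(x_0) \geq \delta$ through the split $|Iv_\eta(x_1) - Iv_\eta(x_0)| + |Iv_\eta(x_0) - I\tilde v_0(x_0)|$. The second piece is bounded by $C\eta$ via ellipticity, since $v_\eta - \tilde v_0 = \eta\,\chi_{B_r}$ and the tail masses $\int_{|y|>r/2}K(y)\,dy$ are uniformly bounded across $L \in \tilde\cL_0$ by the integrability hypotheses \eqref{intconde}--\eqref{intcondo}. The first piece tends to zero uniformly in $L \in \tilde\cL_0$ as $x_1 \to x_0$ by a dominated-convergence argument in the spirit of the continuity lemma following Definition \ref{nonlocalop}; the jump of $v_\eta$ across $\partial B_r$ is harmless because the exceptional translations form a measure-zero set. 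After shrinking $r_0$ (and then $\eta$) so both pieces fall below $\delta/4$, I conclude $Iv_\eta(x_1) \geq \delta/2 > 0$ throughout $E_\eta \subset B_{r_0}$. The main obstacle is securing this uniform continuity/perturbation estimate across the inf-sup family for a piecewise-defined $v_\eta$, which is resolved by invoking the uniform integrability of the kernels.
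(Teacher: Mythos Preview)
Your argument is correct and follows the same Perron bump construction as the paper. The only cosmetic difference is that the paper lifts by $\psi(y)=\varphi(y)-\varepsilon_1|y-x_0|^2+\varepsilon_2$ rather than your flat $\varphi+\eta$; this makes the perturbation $w-v$ an explicit smooth compactly supported bump, so that after first using continuity to secure $Iv>\delta$ on a small ball $B_{r_1}(x_0)$, the ellipticity comparison $Iw(x_1)\geq Iv(x_1)+\cM^-_{\cL}(w-v)(x_1)$ is estimated directly, bypassing your two-term split.
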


\begin{proof}
 Let $\varphi$ be a test function touching $\it{\underbar u}$ by below in $x_0 \in B_{r_0}(x_0) \ss\W$ such that $Iv(x_0) > 0$ for,
\[
 v = \begin{cases}
      \varphi &\text{ in $B_{r_0}(x_0)$,}\\
      \underbar u &\text{ in $\R^n \sm B_{r_0}(x_0)$.}
     \end{cases}
\]
By continuity we also have that $Iv > \d > 0$ in $B_{r_1}(x_0) \ss B_{r_0/2}(x_0)$.

Let $\e_1,\e_2>0$ to be fixed and
\[
 \psi(y) = \varphi(y) - \e_1|y-x_0|^2 + \e_2.
\]
We have that $\psi \leq u$ in $\R^n \sm B_{r_1}(x_0) \ss \R^n \sm \W$ if $\e_2 < r_1^2$. We want to choose $\e_1$ and $\e_2$ such that $U = \min(\psi,u)$ satisfies $IU \geq 0$.

Let $\eta$ be a test function touching $U$ by above at $x_1\in\W$ in a neighborhood $N$. If $U(x_1) = u(x_1)$ then the inequality follows from the Lemma \ref{testfunc}. If $U(x_1) = \psi(x_1) > u(x_1)$ then necessarily $x_1\in B_{r_1}(x_0)$. By the lower semicontinuity, $\psi > u$ in some open neighborhood around $x_1$ and contained in $B_{r_1}(x_0)$. Because $\psi$ is smooth, $IU(x_0)$ is classically defined and we just have to check that it is non negative. 

Let
\[
 w = \begin{cases}
      \psi &\text{ in $B_{r_0}(x_0)$,}\\
      \underbar u &\text{ in $\R^n \sm B_{r_0}(x_0)$.}
     \end{cases}
\]
By monotonicity and ellipticity,
\begin{align*}
 IU(x_1) &\geq Iw(x_1),\\
&\geq Iv(x_1) + \cM^-_\cL (w-v)(x_1),\\
&\geq \d + \inf_{L \in \cL}L(w-v)(x_1).
\end{align*}
Notice that $w-v = \e_2 - \e_1|y-x_0|^2$ in $B_{r_0}(x_0)$ and it is zero outside. Recall that $x_1\in B_{r_0/2}(x_0)$, so for any $L \in \cL$ with kernel $K$, 
\begin{align*}
 L(w-v)(x_1) &= \int\d(w-v,x_1;y)K(y)dy,\\
&\geq -\e_1\int_{B_{2r_0}}\d(|\cdot-x_0|^2,x_1;y)K(y)dy,\\ 
{} &-\min\1\e_1r_0^2,\e_2\2\int_{\R^n \sm B_{r_0/2}}K(y)dy.
\end{align*}
The second term in the inequality appears since 
\[
\d(w-v,x_1,y)\geq -\min\1\e_1r_0^2,\e_2\2
\]
 for any $x_1$ in $B_{r_0/2}(x_0)$ and $|y|\geq B_{r_0/2}$. Therefore,
\[
 L(w-v)(x_1) \geq -C\1\e_1 + \min(\e_1r_0^2,\e_2)\2.
\]
Then we choose $\e_2 = r_1^2/2$ and $\e_1$ sufficiently small to make $L(w-v)(x) \geq -\d/2$ uniformly in $L \in \cL$ and $x \in B_{r_1}(x_0)$. This finally implies that $IU \geq 0$ and concludes the prove of the lemma.
\end{proof}

\begin{lemma}\label{barrier}
Let $\varphi(x)=\min(1, C(|x|-1)_+^\a)$, where $C$ and $\a$ has been chosen as in \cite{C2}. Then for any pair $\s$, $\t$ satisfying H1 we have
\[
\cM^+_{\tilde{\cL}_0}\varphi(x)\leq 0, \ \  x\in \R^n\sm B_1.
\]
Moreover,
\[
\cM^+_{\tilde{\cL}_0}\varphi(x)\leq -\d < 0, \ \  x\in B_2\sm B_1.
\]
\end{lemma}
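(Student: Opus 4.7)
My plan is to exploit the natural chain $\cM^+_{\tilde\cL_0}\varphi \leq \cM^+_{\cL_0}\varphi = \cM^+_\s\varphi + b\,|D_\t|\varphi$ recorded at the end of Subsection 2.3, handling the symmetric term by quoting the barrier estimate of \cite{C2} and bounding the skew-symmetric term directly, using hypothesis H3 to tie the magnitude of $b(1-\t)$ to that of $\l(2-\s)$. The fact that these two quantities are comparable under H3 is what lets the second summand be absorbed into the first.

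With $C$ and $\alpha$ as in \cite{C2}, the symmetric estimate reads $\cM^+_\s\varphi(x)\leq 0$ for every $x\in\R^n\setminus B_1$, together with the strict bound $\cM^+_\s\varphi(x)\leq -c_0(2-\s)$ on $B_2\setminus B_1$, where $c_0 > 0$ depends only on $n, \l, \L, C$ and $\alpha$ and is uniform in $\s\geq\s_0$. To estimate
\[
b\,|D_\t|\varphi(x) \;=\; b(1-\t)\int \frac{|\d_o(\varphi,x;y)|}{|y|^{n+\t}}\,dy,
\]
I would split the integral at $|y|=1$: on the tail $\{|y|>1\}$ the bound $\|\varphi\|_\8\leq 1$ yields $\leq 2\omega_{n-1}b(1-\t)/\t \leq 2\omega_{n-1}\l A_0(2-\s)/\t_0$ by H3; on the head $\{|y|<1\}$, the H\"older-$\alpha$ regularity of $\varphi$ combined with the $(1-\t)$ prefactor absorbing the near-singular $1/(1-\t)$ from $\int_0^1 r^{-\t}\,dr$ produces an analogous $O(b(1-\t)) = O(\l A_0(2-\s))$ contribution. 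Summed, $b\,|D_\t|\varphi(x) \leq C_2\l A_0(2-\s)$ with $C_2$ depending only on $n,\t_0,C,\alpha$.

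Combining gives $\cM^+_{\tilde\cL_0}\varphi(x) \leq (2-\s)(-c_0 + C_2 \l A_0)$ on $B_2\setminus B_1$, and an analogous bookkeeping on $\R^n\setminus B_2$ yields the nonpositive bound there. Choosing the barrier constant $C$ from \cite{C2} large enough that $c_0 > 2C_2\l A_0$ --- permissible since making $C$ larger only enhances the negativity of $\cM^+_\s\varphi$ --- forces the strict bound $\cM^+_{\tilde\cL_0}\varphi(x) \leq -\d := -c_0(2-\s)/2 < 0$ on $B_2\setminus B_1$, proving the lemma. The main obstacle I expect is the uniform control of $|D_\t|\varphi$ for $x$ close to $\p B_1$, where $|\n\varphi|$ may blow up if $\alpha<1$; resolving this requires an additional splitting of the near-field integral around the radius $\mathrm{dist}(x,\p B_1)$, combined with the observation that the concavity of the radial profile of $\varphi$ makes $\cM^+_\s\varphi$ blow up at a comparable or faster rate near $\p B_1$, so that the ratio of the two terms remains controlled by $\l A_0$ through H3.
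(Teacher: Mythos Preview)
Your approach differs from the paper's in two essential ways, and one of them is a genuine gap. First, you invoke hypothesis H3 to tie $b(1-\t)$ to $\l(2-\s)$ via the constant $A_0$, but the lemma is stated only under H1; $A_0$ is simply not available to you here. Second, you propose to re-choose the barrier constant $C$ so that $c_0 > 2C_2\l A_0$, whereas the lemma takes $C$ and $\a$ as already fixed by \cite{C2}.

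The paper sidesteps both issues by a scaling-plus-monotonicity argument rather than direct estimation. It works with the untruncated profile $v(x)=(|x|-1)_+^\a$ and fixes a single reference point $x_0\in\p B_{1+r_0}$, where $\cM^+_\s v(x_0)=-d<0$ and $|D_\t|v(x_0)=e<\infty$ are just two finite numbers. For a point $x$ closer to $\p B_1$, say $|x|=1+sr_0$ with $s\in(0,1)$, the paper introduces the dilation $v_s(y)=s^{-\a}v(sy)$, translates it so that it touches $v$ from below along the ray through $x_0$, and applies the monotonicity Lemma~\ref{mono} (this is where the non-negativity of the kernels in $\tilde\cL_0$ is used) to bound $Lv(x)$ by the rescaled operator $L_s$ acting on $v$ at $x_0$. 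The scaling rules of Section~\ref{VSP} then give
\[
Lv(x)\;\leq\; s^{-\s}\cM^+_\s v(x_0) + s^{-\t}b\,|D_\t|v(x_0)\;=\;s^{-\s}\bigl(-d + s^{\s-\t}be\bigr),
\]
and since H1 forces $\s>\t$, choosing $s$ small makes the bracket $\leq -d/2$ uniformly in $L\in\tilde\cL_0$. This disposes of every $x$ near $\p B_1$ without any explicit near-field integral splitting and without H3: the only structural fact used is $\s>\t$. A large multiplicative constant and truncation via Lemma~\ref{minlema} then produce $\varphi$.

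Your closing heuristic --- that $\cM^+_\s\varphi$ blows up faster than $|D_\t|\varphi$ near $\p B_1$ because $\s>\t$ --- is exactly the phenomenon the paper is exploiting, but the paper encodes it as a one-parameter dilation followed by a single use of monotonicity, rather than a multi-scale decomposition of the integrals. Your direct route could probably be pushed through under H1 alone by carrying out that decomposition carefully (and letting the barrier constants depend on $b$), but as written it leans on H3 and leaves the boundary analysis as an acknowledged obstacle rather than a completed step.
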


\begin{proof}
Let $r_0$ and $\a$ the radius and exponent from Lemma 3.1 in \cite{C2}. We know that $\cM_\s^+ v(x_0) = -d$ ($d>0$) and $|D_\t|v(x_0) = e <\8$ for every $x_0 \in \p B_{1+r_0}$.

Let $s \in (0,1)$ and rescale $v$ by
\[
v_s(x) = s^{-\a}v(sx) = (|x|-s^{-1})^\a_+.
\]
Recall the scaling remarks on section \ref{VSP}. For $L \in \tilde\cL_0(\s,\t,\l,\L,b)$ with kernel $K$ we consider $L_s \in \tilde\cL_0(\s,\t,s^{-\s}\l,s^{-\s}\L,s^{-\t}b)$ with kernel $K_s(y) = s^nK(sy)$ such that
\[
L_sv_s(x) = s^{-\a}(Lv)(sx).
\]

Let $x$ such that $(1+r_0)x = (1 + sr_0)x_0$ and translate $v_s$ such that it remains below $v$ but touches it in a whole ray passing through $x$ and $x_0$. We still denote the translation $v_s$. By the scaling,
\[
Lv(x) = s^\a(L_sv_s)(x_0) \leq (L_sv_s)(x_0)
\]
By the monotonicity lemma \ref{mono} applied to $v$ and $v_s$ at $x_0$,
\[
L_sv_s(x_0) \leq L_sv(x_0).
\]
Since $L_s \in \tilde\cL_0(\s,\t,s^{-\s}\l,s^{-\s}\L,s^{-\t}b) \ss \cL_0(\s,\t,s^{-\s}\l,s^{-\s}\L,s^{-\t}b)$,
\begin{align*}
L_sv(x_0) &\leq s^{-\s}\cM^+_\s v(x_0) + s^{-\t} b|D_t|v(x_0),\\
&\leq s^{-\s}\3-d+s^{\s-\t}be\4,\\
&\leq -d/2,
\end{align*}
if $s$ is small enough. By transitivity $Lv(x)\leq -d/2$ for any $x$ with $|x| = 1+sr_0$ with $s\in(0,s_0)$. Finally we just multiply $v$ by a constant $C$ big enough such that $C v(s_0r_0) \geq 1$ and use Lemma \ref{minlema} to conclude the lemma for the truncation of $v$.
\end{proof}

\begin{proof}[Proof of Theorem \ref{exist}]
Let $S$ be the set of all viscosity subsolutions of $Iv \geq 0$ with boundary data smaller than $g$,
\begin{align*}
S = \3v \in USC(\W)\cap L^\8(\R^n): Iv \geq 0 \text{ in viscosity in } \W \right.\\
\left.\text{ and $v \leq g$ in $\R^n \sm \W$}\4.
\end{align*}
The set $S$ is non empty because the constant function $u = -\|g\|_\8$ satisfies $Iu = 0$ given that $I$ is of the inf-sup type.

The first Lemma assures us that $\bar u$, defined as the upper semicontinuous envelope in $\W$ of $u(x) = \sup_{v\in S}v(x)$, is a viscosity sub solution of $I\bar u \geq 0$. Then $\bar u \in S$ and $\bar u = u$ is a sub solution too. By the second lemma the lower semicontinuous envelope $\underbar{u}$, is a super solution. If not that would contradict the fact that $u$ is the biggest subsolution. We conclude, by the comparison principle, that $\underbar{u}\geq u$ and therefore both have to be equal and $u$ is a viscosity solution of $Iu = 0$ in $\W$.

The next step is to prove that we actually attain the boundary values in a continuous way. We have to show that for any $x \in \R^n \sm \W$ and any $\e > 0$ we can find continuous barriers $v$ and $w$ such that,
\begin{enumerate}
\item[(i)] $Iw \leq 0$ and $Iv \geq 0$ in $\W$ in the viscosity sense,
\item[(ii)] $w \geq g$ and $v \leq g$ in $\R^n \sm \W$,
\item[(iii)] $w(x) \leq g(x) + \e$ and $v(x) \geq g(x) - \e$.
\end{enumerate}
We just prove it for $w$.

If $x$ belongs to the interior of $\R^n \sm \W$ then a function $w$ which is equal to $\|g\|_\8$ for every $y \neq x$ and equal to $g(x)$ for $y = x$ is in $USC(\W)$ and is a super solution. If $x \in \p\W$ then there is a ball $B_{r_0}(x+r_0\eta)$ such that $\bar{B}_{r_0}(x+r_0\eta)\cap\partial \W=\3 x\4$, where $\eta$ is a unitary vector and $r_0$ less than one. Let
\[
w(y) = 2\|g\|_\8\varphi\1\frac{y-(x+r\eta)}{r}\2 + g(x) + \e
\]
with $\varphi$ from Lemma \ref{barrier} and some $r<r_0$. By the construction of $\varphi$ we already have that (i) and (iii) are satisfied.

To check (ii) let $\d>0$ such that $|g(y)-g(x)| \leq \e$ whenever $|x-y|\leq \d$. Take $r$ such that $B_{2r}(x+r\eta) \ss B_{\d}(x)$. If $y \in B_{\d}(x) \cap (\R^n \sm \W)$ then $w(y) = g(x) + \e \geq g(y)$. If $y \in \R^n \sm B_{\d}(x) \ss \R^n \sm B_{2r}(x+r\eta)$ then $\varphi \geq 1$ and $w(y) \geq \|g\|_\8 \geq g(y)$.
\end{proof}


\section{Partial ABP Estimates}\label{ABPp}

The classical ABP theorem states that for a super solution, positive in $\p B_3$, the supremum of $u^-$ is controlled by the $L^n$ norm of the right hand side, integrated only over the contact set for the convex envelope. These estimates are useful to get lower bounds in the measure of the contact set which are then needed to get point estimates.

We denote by $\G$ the convex envelope supported in $B_3$. For a lower semicontinuous function $u\geq 0$ in $\R^n\sm B_1$,
\[
\G(x) = \sup\{v(x): v:B_3\to\R \text{ is convex and } v \leq u^-\}.
\]
We get the same definition if $v$ is only affine. Every time we refer to $\n\G(x)$ we are actually referring to a sub differential of $\G$ at $x$ which always exists. 

In the next lemma we see that we can almost put a paraboloid above $\G$, with the opening controlled by $f(x)$, the supremum of $u$ outside $B_1$ and the $\t$ derivative of $\G$ at $x$.

\begin{lemma}\label{preabp}
Let $u \geq 0$ in $\R^n \sm B_1$ be a globally bounded viscosity solution of,
\begin{align*}
\cM^-_{\tilde{\cL}_0}u \leq f \text{ in } B_1,
\end{align*}
and $x \in \3u = \G\4$. Assume H1 holds ($2>\s>\s_0$ and $\min(1,\s)>\t>\t_0$) and $2b \leq \l(2-\s)/(1-\t)$. Let $\r_0 = 1/(128\sqrt{n})$, $r_k = \r_02^{-1/(2-\s)-k}$ and $R_k = B_{r_k} \sm B_{r_{k+1}}$. Then there is a constant $C_0$ such that for any $M > 0$ there is a $k$ such that
\[
|\3y\in R_k: u(y+x) > u(x) + y\cdot \n\G(x) + Mr_k^2\4| \leq C_0 \frac{F(x)}{M}|R_k|,
\]
where
\[
F(x) = f(x) + (1-\t)b \int_{B_2} \frac{|\d_0(\G,x;y)|}{|y|^{n+\t}} dy + \frac{1-\t}{\t}b \|u^+\|_{L^{\8}(\R^n \sm B_1)}.
\]
\end{lemma}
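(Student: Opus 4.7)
The strategy is to turn the viscosity inequality at the contact point $x$ into a classical pointwise bound for $\cM^-_{\cL_0}u(x)$, apply Lemma \ref{interpolation} to absorb the odd/drift integral into a fraction of the main $\d_e^+$ integral plus controlled errors, and then derive the measure bound on $R_k$ by contradiction across the dyadic annuli. Set $P(z)=u(x)+\n\G(x)\cdot(z-x)$; convexity of $\G$ combined with the contact condition gives $u\geq\G\geq P$ on $B_3$. The paraboloids $P(z)-\e|z-x|^2$ lie in $C^{1,1}(x)$ and touch $u$ from below at $x$, so Lemma \ref{welldefined} yields $\cM^-_{\tilde{\cL}_0}u(x)\leq f(x)$ in the classical sense, and $\tilde{\cL}_0\ss\cL_0$ upgrades this to
\[
\cM^-_\s u(x)-b|D_\t|u(x)\leq f(x).
\]

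For the reduction to a single $\d_e^+$ integral, convexity of $\G$ forces $\d_e^-(u,x,y)=0$ on $B_2$, and for $|y|\geq 2$ both $x\pm y$ lie outside $B_1$ with $u\geq 0$, so the far-field contributions of $\d_e^-(u)$ and $|\d_o(u)|$ are controlled by $\|u^+\|_{L^\8(\R^n\sm B_1)}$ integrated against the kernel tail, producing the factor $(1-\t)/\t$ in $F(x)$. On the inner region, the hypothesis $2b(1-\t)\leq\l(2-\s)$ makes Lemma \ref{interpolation} applicable with $\a=1/2$ and $\varphi=\G$ on a universal ball, converting $|\d_o(u)|$ into half of the main $\d_e^+(u)$ integral plus $|\d_o(\G)|$ (recall $\d_e^-(\G)=0$ by convexity). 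Collecting all contributions yields the fundamental estimate
\[
\l(2-\s)\int_{B_{r_*}}\frac{\d_e^+(u,x,y)}{|y|^{n+\s}}\,dy\leq CF(x)
\]
for some universal radius $r_*>0$ and constant $C$.

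For the measure bound, assume for contradiction that $|A_k|>C_0F(x)|R_k|/M$ for every $k\geq 0$, where $A_k=\{y\in R_k:u(x+y)>P(x+y)+Mr_k^2\}$. Since $|y|\leq r_k\leq\r_0<1/2$ on $A_k$, both $x\pm y$ lie in $B_3$, and $u\geq P$ forces $\d_e(u,x,y)=(u-P)(x+y)+(u-P)(x-y)>Mr_k^2$, hence
\[
\int_{R_k}\frac{\d_e^+(u,x,y)}{|y|^{n+\s}}\,dy\geq\frac{Mr_k^2}{r_k^{n+\s}}|A_k|\geq cC_0F(x)\,r_k^{2-\s}.
\]
Summing over the disjoint $R_k\ss B_{r_*}$ and using that the explicit choice $r_k=\r_0 2^{-1/(2-\s)-k}$ makes $(2-\s)\sum_{k\geq 0}r_k^{2-\s}$ bounded below by a positive universal constant on $\s\in[\s_0,2)$ (the limit as $\s\to 2^-$ equals $1/(2\ln 2)$) contradicts the fundamental estimate once $C_0$ exceeds a suitable universal constant. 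The main technical point is preserving universality as $\s\to 2^-$ and $\t\to 1^-$: H3 keeps the interpolation radius away from zero, while the $2^{-1/(2-\s)}$ shift in $r_k$ precisely cancels the $(2-\s)$ factor built into $\cM^-_\s$, making all constants independent of $\s$.
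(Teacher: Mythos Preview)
Your proposal is correct and follows essentially the same argument as the paper: invoke Lemma~\ref{welldefined} at the contact point to evaluate $\cM^-_{\cL_0}u(x)$ classically, use convexity to kill $\d_e^-(u,x;y)$, apply Lemma~\ref{interpolation} with $\varphi=\G$ and $\a=1/2$ (the smallness hypothesis on $b$ gives the interpolation radius $\geq 1$), bound the far-field $|\d_o(u)|$ by $\|u^+\|_{L^\infty(\R^n\sm B_1)}$, and then run the contradiction over the dyadic rings using that $(2-\s)\sum_k r_k^{2-\s}$ stays uniformly positive. Your added remarks---the explicit paraboloid $P-\e|\cdot-x|^2$ to justify Lemma~\ref{welldefined}, and the computation of the $\s\to 2^-$ limit of the geometric series---are expository refinements, not a different route.
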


\begin{proof}
Notice that $\d_e^-(u,x;y) = 0$. If $x\pm y \in B_3$ then we use that there is a plane touching $u$ by below in $B_3$. If $x+y \notin B_3$ then $x-y\notin B_1$ and the boundary value gives that $u(x\pm y) \geq 0$ and then $\d_e(u,x;y)\geq 0$ because $u(x) \leq 0$.

By Lemma \ref{welldefined} the following quantities can be computed and satisfy,
\begin{align*}
&\int_{B_{r_0}} (2-\s)\l \frac{\d_e^+(u,x;y)}{|y|^{n+\s}} - (1-\t)b \frac{|\d_o(u,x;y)|}{|y|^{n+\t}} dy \leq\\
&f(x) + (1-\t)b \int_{\R^n \sm B_{r_0}} \frac{|\d_o(u,x;y)|}{|y|^{n+\t}} dy \leq\\
&f(x) + C(n)b\frac{1-\t}{\t}\|u^+\|_{L^{\8}(\R^n \sm B_1)}.
\end{align*}

We want to use Lemma \ref{interpolation} with $\Gamma$ as the test function. The assumption $2b \leq \l(2-\s)/(1-\t)$ guarantees \eqref{interpol} with $\a=1/2$,
\begin{align*}
&\int_{B_{r_0}} (2-\s)\l \frac{\d_e^+(u,x;y)}{|y|^{n+\s}} - (1-\t)b \frac{|\d_o(u,x;y)|}{|y|^{n+\t}} dy \geq\\
&\int_{B_{r_0}} \frac{(2-\s)\l}{2}\frac{\d_e^+(u,x;y)}{|y|^{n+\s}} dy - (1-\t)b \int_{B_{r_0}} \frac{|\d_0(\G,x;y)|}{|y|^{n+\t}} dy.
\end{align*}
Adding what we have so far
\begin{align}\label{abp1}
(2-\s)\int_{B_{r_0}} \frac{\d_e^+(u,x;y)}{|y|^{n+\s}} dy \leq CF(x)
\end{align}

The rest of the proof goes as in \cite{C1}. Fix $M$ and assume that none of the dyadic rings satisfies the conclusion of the lemma for $C_0$ still to be fixed. For every $y \in R_k$ where
\[
u(y+x) > u(x) + y\cdot\n\G(x) + Mr_k^2,
\]
we have that,
\begin{align*}
\d_e(u,x;y) &= u(x+y) + u(x-y) - 2u(x),\\
&> y\cdot\n\G(x) + Mr_k^2 + u(x-y) - u(x),\\
&\geq Mr_k^2.
\end{align*}
because by the convexity of $\G$,
\[
- y\cdot\n\G(x) + u(x) = - y\cdot\n\G(x) + \G(x) \leq \G(x-y) \leq u(x-y).
\]
Adding all the contributions into the estimate \eqref{abp1} to get,
\begin{align*}
(2-\s)\int_{B_{r_0}} \frac{\d_e^+(u,x;y)}{|y|^{n+\s}} &\geq (2-\s)C_0M\frac{F(x)}{M}\sum_{k=0}^\8r_k^{2-\s},\\
&\geq C_0F(x)r_0^{2-\s}\frac{2-\s}{1-2^{-(2-\s)}}.
\end{align*}
Now it just a matter to take $C_0$ large enough to get a contradiction. Notice that the quotient $(2-\s)/(1-2^{-(2-\s)})$ is uniformly bounded by above and away from zero when $\s$ varies in $(0,2)$.
\end{proof}

The following is just a modification of the previous lemma. The aim is to replace the second term in $F(x)$ by $\|u\|_{\8}$.

\begin{remark}\label{star}
By the intermediate value theorem, for each $x \in B_1$ and $y \in B_2$, $|\d_0(\G,x;y)|$ is equal to $2|\n\G(x')\|y|$ for $x'$ an intermediate point in the segment between $x+y$ and $x-y$. So that
\[
\int_{B_2} \frac{|\d_0(\G,x,y)|}{|y|^{n+\t}}dy \leq \frac{C(n)}{1-\t}\|\n\G\|_{\8}.
\]
By the geometry of the convex envelope $\|\n\G\|_{\8} \leq \|u^-\|_\8/2$. We can also consider that $\t > \t_0 > 0$ for $\t_0$ universal, so that $F(x)$ can be simplified to
\[
F(x) = f(x) + b\|u\|_{\8}.
\]
Notice that we haven't absorb the constant $b$ into the universal constants of the estimate. The importance of this choice will be seen in the results of the next sections.
\end{remark}


\begin{corollary}\label{coropreabp}
 Under the assumptions of Lemma \ref{preabp} there exists a small fraction $\e_0>0$ and a constant $M_0>0$ such that for some radius $r$ and $R = B_r \sm B_{r/2}$:
\begin{enumerate}
 \item[(i)] $|\3y\in R: u(y+x) > u(x) + y\cdot \n\G(x) + M_0 F(x)r^2\4| \leq \e_0|R|$.
 \item[(ii)] $\G(y+x) \leq u(x) + y\cdot \n\G(x) + M_0 F(x)r^2$ for $y \in B_{r/2}$.
 \item[(iii)] $|\n\G(B_{r/4}(x))| \leq C|B_{r/4}|F(x)^n$
\end{enumerate}
\end{corollary}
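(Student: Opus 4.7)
The plan is to obtain (i) by a direct application of Lemma \ref{preabp}, deduce (ii) from (i) by combining the convexity of $\G$ with a density-versus-convex-hull argument, and conclude (iii) from (ii) by a standard Lipschitz estimate on subdifferentials of convex functions.

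First I would choose $M_0 = C_0/\e_0$, where $C_0$ is the universal constant from Lemma \ref{preabp} and $\e_0 = \e_0(n) > 0$ is the small dimensional constant to be fixed below. Applying Lemma \ref{preabp} with $M = M_0 F(x)$ (the case $F(x)=0$ is trivial, or is handled in the limit) selects an index $k$. Setting $r = r_k$, the annulus $R_k$ matches $B_r\sm B_{r/2}$, and the measure of the exceptional set is bounded by $C_0 F(x)/M = C_0/M_0 \le \e_0$ times $|R|$. This is exactly (i), with the affine function $L(y) = u(x) + y\cdot\n\G(x) + M_0 F(x) r^2$.

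For (ii) I would set $G = \{y\in R: u(x+y)\le L(y)\}$, so $|R\sm G| \le \e_0|R|$ by (i). Since $\G \le u$ globally (by the definition of the convex envelope), $\G(x+y) \le L(y)$ for $y\in G$; because $L$ is affine and $\G(x+\cdot)$ is convex on $B_3$, Jensen's inequality extends this bound to $\mathrm{conv}(G)$. The remaining geometric step is the claim that $\mathrm{conv}(G) \supset B_{r/2}$ once $\e_0$ is chosen small in a dimensional way. I would prove this via the elementary observation that for every $y_0 \in B_{r/2}$ and every hyperplane $H$ through $y_0$, each of the two open half-spaces $H^\pm$ satisfies $|R\cap H^\pm| \ge c_n|R|$, with $c_n = c_n(n) > 0$ a dimensional constant. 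This is a direct consequence of $|y_0| \le r/2$ together with standard spherical-cap estimates. Choosing $\e_0 < c_n/2$ then ensures $G\cap H^\pm \ne \emptyset$ for every such $H$, whence $y_0 \in \mathrm{conv}(G)$.

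For (iii) I would let $\pi(y) = \G(x+y) - \G(x) - y\cdot\n\G(x)$. Then $\pi$ is convex, $\pi(0) = 0$, $\pi \ge 0$ everywhere (since $\n\G(x)$ is a supporting slope at the contact point), and $\pi \le M_0 F(x) r^2$ on $B_{r/2}$ by (ii). For any $z \in B_{r/4}$ and any $\xi \in \p\pi(z)$, the subgradient inequality at $z$ and $z+(r/4)e$ (which remains in $B_{r/2}$) gives $(r/4)\,\xi\cdot e \le \pi(z+(r/4)e) - \pi(z) \le M_0 F(x) r^2$. Optimizing over unit vectors $e$ yields $|\xi| \le C F(x) r$. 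Hence $\n\G(B_{r/4}(x))$ is contained in a Euclidean ball of radius $C F(x) r$, of volume at most $C|B_{r/4}|F(x)^n$.

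The main obstacle I anticipate is making the density-versus-convex-hull claim in (ii) quantitative with a universal $\e_0 = \e_0(n)$: the spherical-cap lower bound must be checked uniformly as $y_0$ approaches $\p B_{r/2}$, where one of the caps of $B_r$ becomes thinner (although still of volume comparable to $r^n$). Once (ii) is secured, the remaining steps are essentially bookkeeping.
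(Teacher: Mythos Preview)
Your argument is correct and, for parts (i) and (iii), essentially identical to the paper's. The difference is in (ii). The paper works with the tilted function $y\mapsto\G(x+y)-y\cdot\n\G(x)$: its maximum $N$ on $\bar B_{r/2}$ is attained at a boundary point, say $(r/2)e_1$ after a rotation, and convexity forces this tilted function (hence $u(x+\cdot)-(\cdot)\cdot\n\G(x)$, since $u\ge\G$) to stay above $N$ on the cap $R\cap\{y_1\ge r/2\}$. That cap has measure $2\e_0|R|$ by the explicit choice $\e_0=|A|/(2|B_1\sm B_{1/2}|)$ with $A=(B_1\sm B_{1/2})\cap\{x_1>1/2\}$, so (i) forces $N\le u(x)+M_0F(x)r^2$, which is exactly (ii). Your route instead shows that the good set $G$ has convex hull containing $B_{r/2}$ via hyperplane separation, and then pushes the affine bound to $\mathrm{conv}(G)$ by convexity of $\G$.

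The two arguments are really the same estimate read in opposite directions: your worst half-space bound $|R\cap H^\pm|\ge c_n|R|$ is minimized precisely when $H$ is tangent to $\partial B_{r/2}$, and the resulting set is exactly (a rotation of) the paper's cap $A$, so your $c_n$ equals the paper's $2\e_0$. In particular the uniformity you flag as the ``main obstacle'' is immediate: as $y_0\to\partial B_{r/2}$ the thinner half-space intersected with $R$ converges to $B_r\cap\{y_1>r/2\}$, which has volume comparable to $r^n$ since the inner ball contributes nothing there. Your approach has the mild advantage of making the role of convexity of $\G$ completely transparent (Jensen on $\mathrm{conv}(G)$), while the paper's version pins down the optimal constant $\e_0$ with no search.
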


\begin{proof}
 Let $A$ be the following set
\[
 A = \1B_1 \sm B_{1/2}\2 \cap \{x_1 > 1/2\}.
\]
Take
\[
\e_0 = \frac{|A|}{2|B_1 \sm B_{1/2}|} \text{ and } M_0 = \frac{C_0}{\e_0},
\]
with $C_0$ from Lemma \ref{preabp}. Apply Lemma \ref{preabp} with $M = F(x)M_0$ to get a radius $r (= r_k)$ such that
\begin{align}\label{cororing}
 |\3y\in R: u(y+x) > u(x) + y\cdot \n\G(x) + M_0 F(x)r^2\4| \leq \e_0|R|.
\end{align}

By convexity we can assume without loss of generality that $\G$ attains its maximum $N$ on $B_{r/2}(x)$ at the point $(r/2)e_1 + x$ and
\[
 \G(y+x) \geq \G((r/2)e_1 + x),
\]
for every $y \in R$ with $y\cdot e_1 \geq r/2$. Therefore,
\begin{align*}
2\e_0|R| &\leq |\{y\in R:\G(y+x) \geq N\}|,\\
&\leq |\{y\in R: u(y+x)\geq N\}|.
\end{align*}
Then $N$ has to be smaller or equal than $u(x) + y\cdot \n\G(x) + M_0 F(x)r^2$ because otherwise we get a contradiction with \eqref{cororing}. This implies (ii).

Finally, by $\G$ being trapped between two planes in $B_{r/2}$, separated by a distance $M_0F(x)r^2$, we get by the geometry of convex functions a control in the oscillation of $\n\G$ in $B_{r/4}$. Namely $\n\G(B_{r/4})$ is contained in the ball of radius $4M_0 F(x)r$ with center at $\n\G(x)$. This concludes the proof.
\end{proof}

Now we are able to state and prove an ABP type estimate.


\begin{theorem}\label{ABP}
Let $u \geq 0$ in $\R^n \sm B_1$ be a globally bounded viscosity solution of,
\begin{align*}
\cM^-_{\tilde{\cL}_0}u \leq f \text{ in } B_1.
\end{align*}
Assume H1 holds and $2b \leq \l(2-\s)/(1-\t)$. There is a disjoint family of cubes $Q_j$ with diameters $d_j \leq \r_02^{-1/(2-\s)}$ ($\r_0 = 1/(32\sqrt{n})$) which covers the contact set $\3\G = u\4$ such that the following holds
\begin{enumerate}
 \item[(i)] $\3u = \G\4 \cap \bar{Q}_j \neq \emptyset$ for any $Q_j$.
 \item[(ii)]\label{abp2}$\displaystyle\left|\3y \in 8\sqrt{n}Q_j: u(y) < \G(y) + C \1\max_{x \in Q_j \cap \3\G = u\4}F(x)\2d_j^2\4\right| \geq \mu|Q_j|$.
 \item[(iii)]$\displaystyle |\n\G(\bar{Q}_j)| \leq C \1\max_{x \in Q_j \cap \3\G = u\4}F(x)\2^n|Q_j|$.
\end{enumerate}
where $\mu$ ($=(1-\e_0)$ from Corollary \ref{coropreabp}) and $C$ above are universal (independent of $\s$ and $\t$) and
\[
F(x) = f(x) + (1-\t)b \int_{B_2} \frac{|\d_0(\G,x;y)|}{|y|^{n+\t}} dy + \frac{1-\t}{\t}b \|u^+\|_{L^{\8}(\R^n \sm B_1)}.
\]
\end{theorem}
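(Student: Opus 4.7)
The strategy is to package the pointwise information of Corollary~\ref{coropreabp} into a disjoint covering of the contact set via a dyadic maximality argument, in the spirit of a Calder\'on--Zygmund decomposition.

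First, at every contact point $x \in \{u = \G\}$ (which is contained in $B_1$) I apply Corollary~\ref{coropreabp} to obtain a dyadic scale $r(x) = \r_0\, 2^{-1/(2-\s)-k(x)}$ (with $\r_0 = 1/(128\sqrt n)$ from Lemma~\ref{preabp}) at which the corollary's conclusions (i)--(iii) hold. I fix a standard dyadic mesh on $\R^n$ and associate to $x$ the unique dyadic cube $Q(x) \ni x$ whose side length is the largest dyadic value not exceeding $r(x)/(8n)$. This choice is tuned so that simultaneously $Q(x) \subset B_{r(x)/4}(x)$ (needed for property (iii)) and $8\sqrt{n}\, Q(x) \supset B_{r(x)}(x)$ (needed for (ii)). The diameter bound $d(x) \leq \frac{1}{32\sqrt n}\, 2^{-1/(2-\s)}$ of the theorem follows automatically from $r(x) \leq \r_0\, 2^{-1/(2-\s)}$.

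Since any two dyadic cubes are either disjoint or nested, the maximal elements of the family $\mathcal F := \{Q(x) : x \in \{u = \G\}\}$ form a pairwise disjoint subfamily $\{Q_j\}$ whose union still covers the contact set; moreover each $Q_j$ equals $Q(x_j)$ for some $x_j \in Q_j \cap \{u=\G\}$, giving property (i). For (iii), the inclusion $Q_j \subset B_{r(x_j)/4}(x_j)$ together with Corollary~\ref{coropreabp}(iii) gives $|\n\G(\bar Q_j)| \leq C F(x_j)^n |Q_j|$, and $F(x_j) \leq \max_{Q_j \cap \{u=\G\}} F$. For (ii), I combine parts (i) and (ii) of Corollary~\ref{coropreabp}: on the ring $R(x_j) = B_{r(x_j)}(x_j) \sm B_{r(x_j)/2}(x_j)$ there is a subset of measure at least $(1-\e_0)|R(x_j)|$ where $u(y) \leq u(x_j) + (y-x_j)\cdot \n\G(x_j) + M_0 F(x_j) r(x_j)^2$; combined with the subdifferential inequality $\G(y) \geq u(x_j) + (y-x_j)\cdot \n\G(x_j)$ this yields $u(y) - \G(y) \lesssim F(x_j)\, d_j^2$ on that set. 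By our scale choice the set lies in $8\sqrt n\, Q_j$ and has measure $\gtrsim r(x_j)^n \gtrsim |Q_j|$, producing (ii) with $\mu = 1 - \e_0$ up to a universal factor.

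The main obstacle is the geometric bookkeeping needed to reconcile the stated constants: the factor $8\sqrt n$, the shrinkages $1/4$ and $1/2$ appearing in Corollary~\ref{coropreabp}, and the mismatch between the lemma's $\r_0 = 1/(128\sqrt n)$ and the theorem's $\r_0 = 1/(32\sqrt n)$ must all be reconciled so that a single choice of dyadic cube size works for all three properties with a universal $\mu$. A smaller issue is ensuring that the extraction of maximal dyadic cubes preserves the property that each $Q_j$ actually contains a contact point --- but this is automatic because every element of $\mathcal F$ is of the form $Q(x_j)$ for a specific $x_j$ in the contact set.
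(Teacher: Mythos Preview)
Your proposal is correct in spirit and uses the same key input (Corollary~\ref{coropreabp}), but the paper organizes the covering differently. Rather than assigning a cube to each contact point and then extracting maximal dyadic cubes, the paper runs a top-down Calder\'on--Zygmund stopping time: tile $B_1$ by cubes of diameter $\r_02^{-1/(2-\s)}$, discard those missing $\{u=\G\}$, and recursively bisect any surviving cube that fails (ii) or (iii). Termination is argued by contradiction: if a nested chain shrinks to $x_0\in\{u=\G\}$, one invokes Corollary~\ref{coropreabp} at $x_0$ to obtain a scale $r$, and then the cube in the chain with diameter in $[r/8,r/4)$ satisfies both $\bar Q_j\subset B_{r/4}(x_0)$ and $B_r(x_0)\subset 32\sqrt n\,Q_j$, so (ii) and (iii) already held for it. Your bottom-up route is arguably more direct; the stopping-time version has the advantage that disjointness and the diameter bound are automatic from the construction, and one never has to select a scale for \emph{every} contact point.

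On the constant bookkeeping you flag as the main obstacle: it is a real incompatibility, not just for your argument. With the dilation factor $8\sqrt n$ the two inclusions you need, $Q(x)\subset B_{r(x)/4}(x)$ and $B_{r(x)}(x)\subset 8\sqrt n\,Q(x)$, cannot both hold: the first forces side length $\leq r/(4\sqrt n)$ while the second forces side length $\geq 2r/(7\sqrt n)$, and $2/7>1/4$. In particular your choice of side $\leq r(x)/(8n)$ does not give $B_{r(x)}(x)\subset 8\sqrt n\,Q(x)$. The paper's own proof in fact uses $32\sqrt n\,Q_j$ rather than the $8\sqrt n\,Q_j$ appearing in the statement; with that larger factor the admissible window for the side length becomes $[\,2r/(31\sqrt n),\,r/(4\sqrt n)\,]$, which is nonempty and contains a dyadic value. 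If you replace $8\sqrt n$ by $32\sqrt n$ and take the side of $Q(x)$ comparable to $r(x)/\sqrt n$ (say diameter in $[r(x)/8,r(x)/4)$, matching the paper), your maximal-cube argument goes through.
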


\begin{proof}
Lets proceed as in \cite{C1} and cover $B_1$ with a tiling of cubes of diameter $\r_02^{-1/(2-\s)}$. We discard all those that do not intersect the contact set $\{u=\G\}$. Whenever a cube does not satisfy (ii) and (iii), we split it into $2^n$ congruent cubes of half diameter and discard those whose closure does not intersect $\{u=\G\}$. We want to prove that eventually this procedure finishes.

Let's assume that the covering process does not stop. We end up getting a sequence of nested cubes intersecting at a point $x_0 \in \{u=\G\}$. We will prove that there is a cube in the family that did not split, reaching then a contradiction.

Due to Corollary \ref{coropreabp} there is a radius $0<r<\r_02^{-1/(2-\s)}$ such that for $R=B_r\sm B_{r/2}$,
\[
 |\3y\in R: u(y+x) > u(x) + y\cdot \n\G(x) + CF(x_0)r^2\4|\leq \e_0|R|,
\]
and
\[
 |\n\G(B_{r/4}(x_0))|\leq CF(x_0)^n|B_{r/4}|.
\]

There is a cube $Q_j$ with diameter $r/8 \leq d_j < r/4$ such that, $B_{r/4}(x_0)\supset \bar{Q}_j$ and $B_r(x_0)\subset 32\sqrt{n} Q_j $. 

Using the fact that the diameter of the cube and the radius are comparable and that, by the convexity of $\G$, $\G(y)\geq u(x_0)+(y-x_0)\cdot\n\G(x_0)$, we get
\begin{align*}
&\left|\3y \in 32\sqrt{n}Q_j: u(y) \leq \G(y) + C\1\max_{Q_j \cap \3\G = u\4}F\2d_j^2\4\right| \geq\\
&\left|\3y \in 32\sqrt{n}Q_j: u(y) \leq u(x_0)+(y-x_0)\cdot\n\G(x_0) + C\1\max_{Q_j \cap \3\G = u\4}F\2d_j^2\4\right|\\
&\geq (1-\e_0)|R|\geq \m|Q_j|.
\end{align*}
This is (ii) in the statement of the Theorem. Since $\bar{Q}_j$ is contained in $B_r$ we conclude also that (iii) holds and $Q_j$ did not split.
\end{proof}

As $\t$ and $\s$ go to one and two respectively in a controlled way (recall the hypothesis $2b\leq \l(2-\s)/(1-\t)$), this theorem recovers a sufficient step to complete the proof of the classical ABP estimate. However, to prove regularity for $u$ it will be sufficient to use a weaker version where $F(x) = f(x) + b\|u\|_{\8}$ (see Remark \ref{star}).

We also point out that the condition on $b$ is not too restrictive. By the scaling discussion on Section \ref{VSP} we can always consider a dilation of $u$ to make the assumption valid.


\section{Point Estimate}\label{PE}

The point estimate for non linear operators works in someway like the mean value theorem for super harmonic functions. If a non negative super harmonic function is bigger or equal than 1 in half of the points in $B_1$ (in measure) then it gets automatically separated from zero at $B_{1/4}$ a fixed quantity. This is the key step to prove a decay of oscillation and then H\"older regularity for the solutions of our equations. For non local operators, point estimates were already given in \cite{S1}. Those estimates are easier to obtain than in the local case because the definition of the non local operators already involve some sort of averaging. However, the estimates in \cite{S1} blow up when the order of the equation go to the classical one. Our goal here is to see that the same estimates still hold with constant that remain uniform when $\s\to2$ and $\t\to1$ in a controlled way.

From this point on we will always assume that, for $\s_0,\t_0,m,A_0>0$ given, the set of hypothesis H1, H2 and H3 holds.

\begin{enumerate}
\item[(H1)] $2>\s\geq\s_0>0$, $\min(1,\s)>\t\geq\t_0>0$,
\item[(H2)] $\s-\t \geq m > 0$,
\item[(H3)] $\l A_0(2-\s) \geq b(1-\t)$.
\end{enumerate}

We recall the special function constructed in \cite{C1}.

\begin{lemma}
Let $2>\s_0>0$, there is a function $\Phi$ such that,
\begin{enumerate}
 \item[(i)]   $\Phi$ is continuous in $\R^n$,
 \item[(ii)]  $\Phi(x) = 0$ for $x$ outside $B_{2\sqrt n}$,
 \item[(iii)] $\Phi(x) < -2$ for $x$ in $Q_3$, and
 \item[(iv)]  $\cM^+_\s\Phi \leq \psi(x)$ in $\R^n$ for some non negative function $\psi(x)$ supported in $\bar B_{1/4}$
\end{enumerate}
for every $\s > \s_0$.
\end{lemma}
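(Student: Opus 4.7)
The plan is to build $\Phi$ as an explicit truncated radial function, following the construction in \cite{C1}. Take $\Phi(x) = \phi(|x|)$, where $\phi$ vanishes on $[2\sqrt n,\8)$, drops below $-2$ throughout $[0, 3\sqrt n/2]$ (so that $\Phi < -2$ on $Q_3 \ss B_{3\sqrt n/2}$), and whose extension to $\R^n$ is a classical subsolution of $\cM^+_\s\Phi \leq 0$ at every $|x| \geq 1/4$. The natural building block is the Riesz-type profile $h(x) = |x|^{-p}$, with $p \in (0,\s_0)$ a small universal exponent. The explicit identity
\[
(-\Delta)^{\s/2}\1|x|^{-p}\2 = \k(n,\s,p)\,|x|^{-p-\s}, \quad \k(n,\s,p) > 0,
\]
combined with the $(2-\s)$ prefactor built into $\cM^+_\s$ that cancels the degeneration of $\k$ as $\s \ra 2$, gives
\[
\cM^+_\s\1-|x|^{-p}\2 \leq -\k'(n,\s_0,p)\,|x|^{-p-\s} < 0
\]
away from the origin, uniformly for $\s \in [\s_0, 2)$.

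Define the truncation
\[
\Phi(x) = -c_1\,\3\min\1|x|^{-p},\r^{-p}\2 - (2\sqrt n)^{-p}\4^+,
\]
with a universal $\r \in (0, 1/4)$. By construction $\Phi$ is continuous, non-positive, supported in $\bar B_{2\sqrt n}$, constant on $\bar B_\r$, and equal to $-c_1(|x|^{-p}-(2\sqrt n)^{-p})$ on $B_{2\sqrt n}\sm \bar B_\r$; choosing $c_1$ large enough (depending only on $n,p,\r$) gives $\Phi < -2$ throughout $Q_3$.

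The main technical step is verifying $\cM^+_\s\Phi(x) \leq 0$ for $|x| \geq 1/4 > \r$. Decompose $\Phi = -c_1|x|^{-p} + c_1(2\sqrt n)^{-p} + g$ where $g \geq 0$ is bounded and supported in $\bar B_\r \cup (\R^n\sm B_{2\sqrt n})$. Since $\cM^+_\s$ is subadditive and invariant under adding constants,
\[
\cM^+_\s \Phi(x) \leq \cM^+_\s\1-c_1|x|^{-p}\2 + \cM^+_\s g(x).
\]
The first term is strictly negative of order $-c_1|x|^{-p-\s}$ by the Riesz identity above. The second is bounded by a universal multiple of $\|g\|_\8\,(2-\s)\int_{|y|\geq 1/8}|y|^{-n-\s}\,dy$, which is uniformly small in $\s$ and can be made arbitrarily small by shrinking $\r$. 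Fixing $\r$ small enough that this perturbation is absorbed by the reservoir of negativity in the first term completes this step.

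Finally, for $|x|\leq 1/4$ the function $\Phi$ is Lipschitz and bounded, so $\cM^+_\s\Phi(x)$ is uniformly bounded above by a universal constant $K = K(n,\s_0)$; set $\psi := K\chi_{\bar B_{1/4}}$. The main obstacle throughout is uniformity as $\s \ra 2$: the $(2-\s)$ prefactor in $\cM^+_\s$ is exactly what keeps all the estimates finite and controlled, leaving enough slack in the strict negativity of $\cM^+_\s(-h)$ to absorb the truncation error in the previous step.
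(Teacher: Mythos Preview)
The paper does not prove this lemma; it simply recalls the construction from \cite{C1}. Your sketch follows that construction in outline, but the central step is wrong.

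The error is the choice of a \emph{small} exponent $p\in(0,\s_0)$ together with the claim that the Riesz identity $(-\Delta)^{\s/2}(|x|^{-p})=\k\,|x|^{-p-\s}$ yields $\cM^+_\s(-|x|^{-p})\leq -\k'|x|^{-p-\s}<0$. The Riesz identity controls only the single linear operator $-(-\Delta)^{\s/2}$, whereas $\cM^+_\s$ is the \emph{supremum} over all admissible kernels; knowing that one member of the family gives a negative value is no upper bound on the sup. In fact for small $p$ the inequality goes the wrong way. Already in the limit $\s\to2$, where $\cM^+_\s$ becomes the classical Pucci maximal operator, $D^2(-|x|^{-p})$ has one negative eigenvalue $-p(p+1)|x|^{-p-2}$ (radial) and $n-1$ positive eigenvalues $p|x|^{-p-2}$ (tangential), so
\[
\cM^+\bigl(D^2(-|x|^{-p})\bigr)=p\bigl(\L(n-1)-\l(p+1)\bigr)|x|^{-p-2},
\]
which is strictly positive unless $p>\L(n-1)/\l-1$. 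The construction in \cite{C1} takes $p$ \emph{large}, so that the radial concavity of $-|x|^{-p}$ (weighted by $\l$ in $\cM^+_\s$) overwhelms the tangential convexity (weighted by $\L$); one then checks that this persists uniformly for $\s\in[\s_0,2)$.

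There are also secondary slips in the perturbation step. The remainder $g$ is not bounded: on $B_\r$ it equals $c_1(|x|^{-p}-\r^{-p})$, which blows up at the origin, so the estimate via $\|g\|_\8$ is vacuous. Nor is $g\geq0$: on $\R^n\sm B_{2\sqrt n}$ it equals $c_1(|x|^{-p}-(2\sqrt n)^{-p})\leq0$. With the correct large $p$ (possibly $p>n$) the inner piece is not even locally integrable, so the decomposition $\Phi=-c_1|x|^{-p}+\text{const}+g$ is not the right way to organize the argument; \cite{C1} instead works directly with the truncated profile and estimates $\cM^+_\s$ on it.
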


The following lemma provides the first and also the inductive step towards an inductive proof of the point estimate.

\begin{lemma}
Let $\s_0,\t_0,m,A_0>0$ and assume H1, H2 and H3. There exists constants $\m\in(0,1)$, $\e_0>0$ and $M>1$, such that if 
\begin{itemize}\label{iteracion1}
\item[(i)] $u\geq 0$ in $\R^n$,
\item[(ii)] $\inf_{Q_{3\k}}u\leq 1$,
\item[(iii)] $\cM^-_{\tilde{\cL}_0} u\leq 1$ in $Q_{4\sqrt{n}\k}$,
\end{itemize}
then
\[
|\3u\leq M\4 \cap Q_{\k}| > \m|Q_{\k}|,
\]
for
\[
\k = \frac{\e_0}{(1 + \|u\|_{\8})^{1/(\s-\t)}}.
\]
\end{lemma}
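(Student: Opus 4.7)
The plan is to reduce the estimate to one at a unit scale where the ABP theorem~\ref{ABP} applies, then perturb by the special bump function $\Phi$ to force the contact set of the convex envelope to concentrate inside a small fixed ball. The choice of $\kappa$ is dictated by the need to make the lower-order $\t$-term behave as a perturbation of size $\varepsilon_0^{\s-\t}$ after rescaling.

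\textbf{Rescaling.} Set $v(x)=u(\kappa x)$. By the scaling formulas of section~\ref{VSP}, $v$ is a non-negative viscosity solution of
\[
\cM^-_{\tilde{\cL}_0(\s,\t,\l,\L,b')}v\le\kappa^\s \quad \text{in } Q_{4\sqrt n}
\]
with $b'=\kappa^{\s-\t}b$, and $\inf_{Q_3}v\le 1$. The specific choice $\kappa=\varepsilon_0(1+\|u\|_\8)^{-1/(\s-\t)}$ makes $b'(1+\|v\|_\8)=b\,\varepsilon_0^{\s-\t}$, which by H2 and H3 can be made smaller than any prescribed universal constant by taking $\varepsilon_0$ small. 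In particular we arrange $2b'\le\l(2-\s)/(1-\t)$, the structural condition needed to invoke Theorem~\ref{ABP}.

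\textbf{Bump and ABP.} Let $\Phi$ be the special function recalled above and put $w=v+\Phi$. Then $w\ge 0$ outside $B_{2\sqrt n}$, $\inf_{Q_3}w\le -1$, and using $\cM^+_{\tilde{\cL}_0}\Phi\le\cM^+_{\cL_0}\Phi=\cM^+_\s\Phi+b'|D_\t|\Phi\le\psi+Cb'$, we get
\[
\cM^-_{\tilde{\cL}_0}w\le\kappa^\s+\psi(x)+Cb' \quad \text{in } Q_{4\sqrt n}.
\]
After a harmless dilation by $2\sqrt n$ to match the geometry of Theorem~\ref{ABP}, we apply that theorem to $w$. Because $\min_{B_3}\G\le -1$ and $\G\equiv 0$ outside $B_3$, convexity forces $|\n\G(B_3)|\ge c_0$ for some universal $c_0>0$. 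Combining this with (iii) of Theorem~\ref{ABP} and Remark~\ref{star},
\[
c_0\le\sum_j|\n\G(\bar Q_j)|\le C\sum_j\bigl(\textstyle\max_{Q_j\cap\{\G=w\}}F\bigr)^n|Q_j|,
\]
with $F(x)\le\kappa^\s+\psi(x)+Cb'(1+\|w\|_\8)$.

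\textbf{Localization and conclusion.} Outside $\bar B_{1/4}$ we have $\psi=0$ and hence $F\le C\varepsilon_0^{\s-\t}$, so taking $\varepsilon_0$ sufficiently small the contribution to the above sum from cubes $Q_j$ disjoint from $\bar B_{1/4}$ is bounded by $c_0/2$. Therefore $\sum_{Q_j\cap\bar B_{1/4}\ne\emptyset}|Q_j|\ge c_1>0$. On each such cube, part (ii) of Theorem~\ref{ABP} produces a set of measure at least $\m|Q_j|$ where $w\le\G+CFd_j^2\le C$, hence $v=w-\Phi\le C+\|\Phi\|_\8=:M$. Because the diameters $d_j$ are uniformly small (recall the bound $d_j\le\rho_0 2^{-1/(2-\s)}$) and the cubes meet $\bar B_{1/4}$, the enlargements $8\sqrt n Q_j$ remain inside $Q_1$ after undoing the $2\sqrt n$ dilation. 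Undoing the $v=u(\kappa\cdot)$ rescaling transports the estimate into $Q_\kappa$ and yields $|\{u\le M\}\cap Q_\kappa|>\m|Q_\kappa|$.

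\textbf{Main obstacle.} The delicate point is identifying a single rescaling parameter $\kappa$ that simultaneously (i) shrinks the right-hand side, (ii) renders $b'$ small enough for the ABP hypothesis $2b'\le\l(2-\s)/(1-\t)$, and (iii) keeps the universal constants stable as $\s\to 2$ and $\t\to 1$. The formula $\kappa=\varepsilon_0(1+\|u\|_\8)^{-1/(\s-\t)}$ is forced by balancing these three requirements; its well-posedness crucially uses the separation $\s-\t\ge m>0$ from H2, and its compatibility with the ABP hypothesis is guaranteed by H3.
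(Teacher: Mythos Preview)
Your proposal is correct and follows essentially the same route as the paper: rescale by $\kappa$ to make the $\t$-coefficient small, add the bump $\Phi$, apply the partial ABP Theorem~\ref{ABP} (in its simplified form from Remark~\ref{star}) to force the relevant cubes to cluster near the support of $\psi$, and then use part (ii) of that theorem on those cubes to extract the measure estimate. The only cosmetic differences are that the paper works with the envelope supported in $B_{6\sqrt n}$ and a correspondingly rescaled ABP (so the enlargement factor becomes $32\sqrt n$ rather than $8\sqrt n$), and it makes explicit the finite-overlap subcover needed to pass from the per-cube bounds in (ii) to a global measure lower bound in $B_{1/2}\subset Q_1$.
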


\begin{proof}
Consider $\tilde u(x)=u(\k x)$ and note that by the scaling of the equation $\tilde u$ satisfies (i), (ii) in the cube of side 3 and
\[
\cM_{\tilde{\cL}_0(\tilde b)}u\tilde u \leq \k^\s \text{ in } Q_{4\sqrt n},
\]
where $\tilde{\cL}_0(\tilde b)=\tilde{\cL}_0(\s,\t,\l\,\L,\tilde b)$, for $\tilde{b} = \k^{\s-\t}b$. We will prove that the lemma holds for $\tilde u$ in $Q_1$, which implies the desired result. The proof follows as in \cite{C1} but we point out that the ABP type results that we have are different.

First thing we require from $\e_0^m$ is to be small enough, with respect to $A_0$, such that the condition of smallness on $\tilde b\leq \e_0^mb$ from Lemma \ref{preabp} and Theorem \ref{ABP} holds. Namely, $2\e_0^m \leq A_0^{-1}$.

Consider $v=\tilde u + \Phi$, where $\Phi$ is the special function given in \cite{C1}. We have that $v$ satisfies in $Q_{4\sqrt n}$ (since $\tilde{\cL}_0(\tilde{b})\ss\cL_0(\tilde{b})$)
\begin{align*}
\cM_\s^-v - \tilde{b}|D_\t| v &\leq \k^\s + \cM_\s^+\Phi + \k^{\s-\t}b|D_\t|\Phi,\\
                               &\leq \k^\s + \psi + \k^{\s-\t}bC,
\end{align*}
for a universal constant $C$.

Let $\G$ be the concave envelope of $v$ supported in the ball $B_{6\sqrt n}$. Let $Q_j$ be the cubes from a rescaled version of our partial ABP estimate.
We have
\begin{align*}
\max_{B_{2\sqrt n}} v^- &\leq C|\nabla \Gamma (B_{2\sqrt n})|^{1/n}\leq C\left(\sum_j|\nabla \Gamma(\bar Q_j)|\right)^{1/n},\\
                        &\leq C\1\sum_j\1\max_{Q_j}\psi + \k^\s + \k^{\s-\t}b\1 1 + \|v\|_\8\2\2^n\2|Q_j|^{1/n}.
\end{align*}

We can make the terms $\k^\s + \k^{\s-\t}b\1 1 + \|v\|_\8\2$ small enough by choosing $\e_0$ small enough,
\[
\k^\s + \k^{\s-\t}b\1 1 + \|v\|_\8\2 \leq C(\e_0^\s + \e_0^{\s-\t}) \leq C\e_0^m.
\]

Using that $\Phi \leq -2$ in $Q_3$
\[
1 \leq C\e_0^m+C\left(\sum_{j}\1\max_{Q_j}\psi^+\2^n|Q_j|\right)^{1/n},
\]
which implies then, for $\e_0$ small enough, the following inequality,
\[
\frac{1}{2}\leq C\1\sum_{j}\1\max_{Q_j}\psi^+\2^n|Q_j|\2^{1/n}.
\]

Since $\psi$ is supported in $\bar{B}_{1/4}$ and is bounded, we get
\begin{align}\label{summeas}
\sum_{Q_j\cap\bar{B}_{1/4}\neq\emptyset}|Q_j| \geq c,
\end{align}
where $c$ is universal. Now, the diameters of all cubes $Q_j$ are bounded by $\rho_02^{-1/(2-\s)}$, which is smaller than $\rho_0=1/(128\sqrt n)$. So, every time we have that $Q_j$ intersects $B_{1/4}$ the cube $32\sqrt n Q_j$ will be contained in $B_{1/2}$.

Since $\e_0$ is universal, the partial ABP estimates translate into
\begin{align}\label{meas}
|\3x\in 32\sqrt nQ_j:\ v(x)\leq \Gamma(x)+Cd^2_j\4|\geq c|Q_j|,
\end{align}
for $C$ universal and $Cd_j^2<C\rho_0^2$. Let us consider now the cubes $32\sqrt n Q_j$ for every $Q_j$ that intersects $B_{1/4}$. This provides an open cover of the union  of the corresponding cubes $\bar{Q}_j$ and it is contained in $B_{1/2}$. Taking a subcover with finite overlapping and using \eqref{summeas} and \eqref{meas} we get
\[
|\3x\in B_{1/2}: \ v(x)\leq\Gamma(x)+C\rho_0^2\4|\geq c.
\]
Hence, if we let $-M_0=\min_{B_{1/2}}\Phi$ we get
\[
|\3x\in B_{1/2}: \ \tilde u(x)\leq M_0+C\rho_0^2\4| \geq c.
\]
Finally let $M=M_0+C\rho_0^2$ and $\m=c$. Since $B_{1/2}\subset Q_1$,
\[
|\3x\in Q_1: \ \tilde u(x)\leq M\4|\geq c,
\]
which concludes the result for $\tilde u$.
\end{proof}

\begin{remark}\label{alterhyp}
In the previous proof the scaling is necessary to have:
\begin{enumerate}
\item[(i)] $\tilde b (1 + \|u\|_\8) \leq \e_0^m$,
\item[(ii)] Right hand side smaller than $\e_0^m$.
\end{enumerate}
In future references we will use that if these identities hold, then the conclusion also holds without any further scaling.
\end{remark}

\begin{remark}\label{dilation}
Consider for $u$ and $\tilde{u}$ as before and $0 < r \leq 1$, $v(x)=\tilde u(rx)$. Then $v$ satisfies,
\[
\cM_{\tilde{\cL}_0(b(r\k)^{\s-\t})}v\leq (r\k)^\s,
\]
and in particular
\[
\cM_\s^-v(x) - b(r\k)^{\s-\t}|D_\t v| \leq (r\k)^\s.
\]
From the previous remark we check that $b(r\k)^{\s-\t}(1+\|v\|_\8) \leq \e_0^m$ and that the right hand side is also smaller or equal than $\e_0^m$ if $r \leq 1$.
\end{remark}

In particular, the transformations required to prove the full $L^\e$ lemma are of the form
\[
v(y) = \frac{u(x_0 + 2^{-i}y)}{M^k}.
\]
The previous lemma can still be applied and we can iterate by means of a Calder\'on Zygmund decomposition as in \cite{CC}.

\begin{lemma}\label{iteracionk}
Let $\s_0,\t_0,m,A_0>0$ and $u \geq 0$, $\tilde u$ as in Lemma \ref{iteracion1}. Then we have
\[
\left|\3\tilde{u}>M^k\4\cap Q_1\right|\leq(1-\mu)^k,
\]
for $k=1,2,...$ where $M$ and $\mu$ are as in Lemma \ref{iteracion1}. As a consequence we have the following inequality,
\[
|\3\tilde u > t \4\cap Q_1|\leq dt^{-\e}, \ \ \forall t>0,
\]
where $d$ and $\e$ are positive universal constants.  
\end{lemma}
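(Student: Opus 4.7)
The plan is to first prove the discrete geometric-decay statement $|\{\tilde u>M^k\}\cap Q_1|\le(1-\mu)^k$ by induction on $k$, and then deduce the power-law tail bound by interpolating between consecutive levels. Write $A_k=\{\tilde u>M^k\}\cap Q_1$. The base case $k=1$ is exactly the conclusion of Lemma \ref{iteracion1} applied to $\tilde u$: since $\tilde u$ has been set up to satisfy the required hypotheses without any further scaling, one gets $|\{\tilde u\le M\}\cap Q_1|>\mu|Q_1|$ and hence $|A_1|\le 1-\mu$.

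For the inductive step I would invoke the standard Calder\'on--Zygmund stopping-time lemma from \cite{CC} (Lemma 4.2 there): if $A\ss B\ss Q_1$, $\delta\in(0,1)$, $|A|\le\delta|Q_1|$, and every dyadic subcube $Q\subsetneq Q_1$ with $|A\cap Q|>\delta|Q|$ has its dyadic parent $\tilde Q$ contained in $B$, then $|A|\le\delta|B|$. Taking $A=A_k$, $B=A_{k-1}$, $\delta=1-\mu$ yields the inductive step $|A_k|\le(1-\mu)|A_{k-1}|\le(1-\mu)^k$; the bound $|A_k|\le(1-\mu)|Q_1|$ is automatic from $A_k\ss A_1$. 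The only real work is the ``parent lies in $A_{k-1}$'' condition.

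I would establish that condition by contradiction. Suppose $Q\subsetneq Q_1$ is dyadic with $|A_k\cap Q|>(1-\mu)|Q|$, yet there is $x_0\in\tilde Q$ with $\tilde u(x_0)\le M^{k-1}$. Let $s$ be the sidelength of $Q$ and $c$ its center, and set
\[
v(y)=\frac{\tilde u(c+sy)}{M^{k-1}}.
\]
Because $\tilde Q$ has sidelength $2s$ and shares a corner with $Q$, one has $|x_0-c|_\8\le 3s/2$, so the preimage $(x_0-c)/s$ lies in $Q_3$ and $\inf_{Q_3}v\le 1$; clearly $v\ge 0$ on $\R^n$. By Remark \ref{dilation} the dilation by $s\le 1$ in the $\tilde u$-variable preserves the smallness of the effective coefficient $\tilde b$ and of the right-hand side demanded by Remark \ref{alterhyp}, and dividing by $M^{k-1}\ge 1$ only makes them smaller. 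Hence Remark \ref{alterhyp} yields the conclusion of Lemma \ref{iteracion1} for $v$ directly on $Q_1$,
\[
|\{v\le M\}\cap Q_1|>\mu|Q_1|.
\]
Undoing the change of variables $z=c+sy$, which maps $Q_1$ bijectively onto $Q$ with Jacobian $s^n$, this reads $|\{\tilde u\le M^k\}\cap Q|>\mu|Q|$, i.e.\ $|A_k\cap Q|<(1-\mu)|Q|$, contradicting the density hypothesis on $Q$.

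The distributional estimate then follows by interpolation: for $t>M$ pick $k=\lfloor\log_M t\rfloor\ge 1$, so $\{\tilde u>t\}\cap Q_1\ss A_k$ and $(1-\mu)^k\le M^{\e}t^{-\e}$ with $\e=\log\bigl(1/(1-\mu)\bigr)/\log M$; for $t\le M$ the bound $|\{\tilde u>t\}\cap Q_1|\le|Q_1|=1\le dt^{-\e}$ is immediate as soon as $d\ge M^\e$. I expect the main technical obstacle to be precisely the scaling step inside the claim: one has to verify that the affine change of variables $y\mapsto c+sy$ sends the coefficient $b$, the right-hand side, the infimum condition on $Q_3$, and the conclusion on $Q_1$ all into objects to which Lemma \ref{iteracion1} can be applied \emph{verbatim} at the unit scale. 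This is exactly what Remarks \ref{dilation} and \ref{alterhyp} are tailored to provide, and checking that the constraints $s\le 1$ and $x_0\in\tilde Q\subset 3Q$ really are compatible is the heart of the argument.
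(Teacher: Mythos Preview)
Your proposal is correct and follows essentially the same approach as the paper, which does not spell out a proof but simply refers to the Calder\'on--Zygmund iteration of \cite{CC} and notes that the required rescalings $v(y)=u(x_0+2^{-i}y)/M^k$ stay admissible thanks to Remarks~\ref{alterhyp} and~\ref{dilation}. You have filled in precisely the details the paper omits: the Calder\'on--Zygmund stopping-time claim, the verification that $(x_0-c)/s\in Q_3$, and the observation that dilation by $s\le 1$ together with division by $M^{k-1}\ge 1$ preserves the smallness conditions of Remark~\ref{alterhyp}.
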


By standard covering arguments one can pass from cubes to balls.

\begin{corollary}\label{le1}
Let $\s_0,\t_0,m,A_0>0$ and $u \geq 0$, $\tilde u$ as in Lemma \ref{iteracion1} with $u$ super solution of $\cM^-_{\tilde{\cL}_0}u \leq 1$ in $B_{2\k}$ and $u(0) \leq 1$. Then we have
\[
|\3\tilde u > t\4\cap B_1|\leq ct^{-\e}, \ \ \forall t>0,
\]
where $c$ and $\e$ are positive universal constants.
\end{corollary}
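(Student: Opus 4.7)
The plan is to deduce the corollary from Lemma \ref{iteracionk} by a standard finite covering argument. Since $B_1$ is bounded, I fix $N = N(n)$ and lattice points $\{y_i\}_{i=1}^N \subset B_1$ with $B_1 \ss \bigcup_{i=1}^N Q_1(y_i)$, where $Q_1(y_i)$ denotes the unit cube centered at $y_i$. By subadditivity,
\[
|\3\tilde u > t\4 \cap B_1| \leq \sum_{i=1}^N |\3\tilde u > t\4 \cap Q_1(y_i)|,
\]
so it suffices to produce a uniform bound $|\3\tilde u > t\4 \cap Q_1(y_i)| \leq d\, t^{-\e}$ for each $i$ with $d$ universal.

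For each $i$ I intend to apply Lemma \ref{iteracionk} to the translated function $u_i(x) := u(x + \k y_i)$, noting that $\tilde u_i(x) = u_i(\k x)$ agrees with $\tilde u(x + y_i)$, so a distribution bound on $Q_1$ for $\tilde u_i$ is exactly the desired bound on $Q_1(y_i)$ for $\tilde u$. The hypotheses of Lemma \ref{iteracion1} for $u_i$ reduce to: (a) the equation $\cM^-_{\tilde\cL_0} u_i \leq 1$ on $Q_{4\sqrt n \k}$, and (b) $\inf_{Q_{3\k}} u_i \leq 1$. Item (a) follows from the corollary's hypothesis $\cM^-_{\tilde\cL_0} u \leq 1$ on $B_{2\k}$ after shrinking the universal $\e_0$ in the definition $\k = \e_0/(1+\|u\|_\8)^{1/(\s-\t)}$ by a dimensional factor, which makes each translated cube $Q_{4\sqrt n \k}(\k y_i)$ sit inside $B_{2\k}$; this only affects universal constants.

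Item (b), the \emph{infimum condition at each translated center}, is the main obstacle, since a priori we only know $u(0) \leq 1$. To propagate control from $0$ to $\k y_i$, I chain Lemma \ref{iteracion1} along a sequence of overlapping $Q_{3\k}$-cubes connecting $0$ to $\k y_i$ of fixed combinatorial length $N_0 = N_0(n)$: each step of the chain produces, via the measure estimate of Lemma \ref{iteracion1}, a point in the next cube where $u$ is bounded by the universal $M$, hence an infimum estimate in a $Q_{3\k}$-cube further out in the chain. After at most $N_0$ steps one obtains $\inf_{Q_{3\k}(\k y_i)} u \leq M^{N_0}$, and this universal constant on the right-hand side is harmless: by Remark \ref{alterhyp} the conclusions of Lemma \ref{iteracionk} persist under such a rescaling of $u$, at the price of a change in the universal constants $d$ and $\e$. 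Summing the $N$ cube estimates then yields $|\3\tilde u > t\4 \cap B_1| \leq Nd\, t^{-\e}$, i.e.\ the corollary with $c := Nd$ and $\e$ as in Lemma \ref{iteracionk}; modulo the chaining bookkeeping, the argument is the routine passage from cubes to balls used throughout \cite{CC}.
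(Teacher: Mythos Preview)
Your proposal is correct and matches the paper's one-line justification (``By standard covering arguments one can pass from cubes to balls''). Note, however, that the chaining in your item (b) is unnecessary: since you chose $y_i \in B_1$, each coordinate satisfies $|(y_i)_j| \leq 1 < 3/2$, hence $0 \in Q_3(y_i)$, and therefore $\inf_{Q_3(y_i)} \tilde u \leq \tilde u(0) = u(0) \leq 1$ holds directly for every $i$ --- so you may invoke Lemma~\ref{iteracionk} at each center without any propagation step.
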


By using Remark \ref{dilation} one more time we can prove a rescaled version of the Corollary \ref{le1}.

\begin{corollary}\label{leR}
Let $\s_0,\t_0,m,A_0>0$, $u \geq 0$, $\tilde u$ as as in Lemma \ref{iteracion1} with $u$ super solution of $\cM^-_{\tilde{\cL}_0} u\leq C_0$ in $B_{2\k r}$, for $r \leq 1$. Then we have
\begin{align}\label{point}
|\3 \tilde u > t \4 \cap B_r | \leq C r^n (u(0)+C_0r^\s)^{\e} t^{-\e},  \ \ \forall t>0,
\end{align}
where $C$ and $\e$ are positive universal constants.
\end{corollary}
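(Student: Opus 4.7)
The plan is to reduce the statement to the unit-scale setting of Lemma \ref{iteracion1} (specifically its $L^\e$-iterated form used to establish Corollary \ref{le1}) by a single additional normalization, exactly in the spirit of Remark \ref{dilation}.

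Set $M := u(0) + C_0 r^\s$ and consider the normalized, dilated function
\[
w(x) := \frac{\tilde u(rx)}{M} = \frac{u(r\k x)}{M}.
\]
Combining the scaling identity for $\cM^-_{\tilde{\cL}_0}$ (Remark \ref{dilation}, now with right-hand side $C_0$) with the positive homogeneity of the extremal operator, one computes that $w\ge 0$, $w(0)=u(0)/M\le 1$, $\|w\|_\8=\|u\|_\8/M$, and
\[
\cM^-_{\tilde{\cL}_0(b(r\k)^{\s-\t})} w \leq \frac{(r\k)^\s C_0}{M} \qquad \text{in } B_2.
\]

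Next I verify the two hypotheses of Remark \ref{alterhyp} for $w$ at unit scale. The right-hand side is controlled by $\k^\s\le\e_0^\s\le\e_0^m$, using $\s\ge \t+m>m$ from H2 together with $r^\s C_0\le M$. For the coefficient condition, the defining identity $\k^{\s-\t}(1+\|u\|_\8)=\e_0^{\s-\t}$ gives
\[
b(r\k)^{\s-\t}(1+\|w\|_\8) = b\,r^{\s-\t}\,\e_0^{\s-\t}\,\frac{1+\|u\|_\8/M}{1+\|u\|_\8},
\]
which one bounds by $\e_0^m$ using H3 and the smallness of $\e_0$ with respect to $A_0$ (this is precisely the smallness already exploited in the proof of Lemma \ref{iteracion1}). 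With both hypotheses in place, the conclusion of Lemma \ref{iteracion1} applies to $w$ without any further scaling, and the Calder\'on--Zygmund iteration used to pass from Lemma \ref{iteracion1} to Lemma \ref{iteracionk} (then to balls by covering) yields
\[
|\{w>t\}\cap B_1|\le d\,t^{-\e}, \qquad \forall\, t>0.
\]

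Finally, unravel: $w(x)>t$ iff $\tilde u(rx)>Mt$, so the substitution $y=rx$ (Jacobian $r^n$) together with $s=Mt$ (so $t^{-\e}=M^\e s^{-\e}$) converts the previous inequality into
\[
|\{\tilde u>s\}\cap B_r|\le d\,r^n M^\e s^{-\e},
\]
which, after renaming $s\to t$, is exactly \eqref{point}.

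The main obstacle will be the uniform verification of the hypothesis $b(r\k)^{\s-\t}(1+\|w\|_\8)\le \e_0^m$ in the regime $M<1$, where the factor $(1+\|u\|_\8/M)/(1+\|u\|_\8)$ can be as large as $1/M$; this is where the careful choice of $\e_0$ depending on $A_0$ (from the proof of Lemma \ref{iteracion1}) is essential. Once that is controlled, the rest is routine bookkeeping of Jacobians in the change of variables producing the $M^\e$ factor in the estimate.
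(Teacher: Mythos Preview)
Your approach coincides with the paper's one-line indication (``by using Remark~\ref{dilation} one more time''): dilate by $r$, normalize by $M=u(0)+C_0r^\s$, verify the two conditions of Remark~\ref{alterhyp}, run the $L^\e$ iteration at unit scale, and undo the change of variables. The bookkeeping for the right-hand side, for $w(0)\le1$, and for the Jacobian producing the factor $r^nM^\e$ is all correct.

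The gap is in your last paragraph. You correctly isolate the obstruction: in the regime $M<1$ the coefficient quantity
\[
b(r\k)^{\s-\t}\bigl(1+\|w\|_\8\bigr)=b\,r^{\s-\t}\e_0^{\s-\t}\,\frac{1+\|u\|_\8/M}{1+\|u\|_\8}
\]
picks up a factor of order $1/M$. You then assert that this is handled by ``the careful choice of $\e_0$ depending on $A_0$.'' It is not. The constant $\e_0$ is \emph{universal}: it is fixed once and for all from the structural parameters $(\s_0,\t_0,m,A_0,\l,\L,b)$ in the proof of Lemma~\ref{iteracion1}, and it does not see $u(0)$ or $C_0$. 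Since $M=u(0)+C_0r^\s$ can be arbitrarily small, no universal $\e_0$ can absorb a factor $1/M$. Consequently hypothesis (i) of Remark~\ref{alterhyp} is \emph{not} verified for $w$ when $M<1$, and your argument is incomplete precisely in the regime that matters for the application in Section~\ref{Ho} (where one needs the $M^\e$ factor to be small in order to force $w(x)\ge\theta>0$).

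The paper itself provides no details here, so this point is not resolved there either. A genuine fix must avoid the division by $M$ when $M<1$: for instance, work with $v(x)=\tilde u(rx)$ (for which Remark~\ref{dilation} gives both conditions of Remark~\ref{alterhyp} with no $1/M$), and trace the actual sizes $\inf_{Q_3}v\le u(0)$ and right-hand side $(r\k)^\s C_0$ directly through the proof of Lemma~\ref{iteracion1} and its Calder\'on--Zygmund iteration, so that the factor $M^\e$ emerges from those quantities rather than from an a priori normalization. Your final paragraph should be replaced by such an argument.
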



\section{H\"older Regularity}\label{Ho}

The first lemma in this section is the decay of oscillation, that follows from the point estimate already proved, applied at every scale. It is well known that when the oscillation of a function decays geometrically in geometrically decaying balls it implies a H\"older modulus of continuity at the center of such ball. By applying it at every point of a ball strictly contained in the domain we get H\"older regularity.

We still assume the general hypothesis, H1, H2 and H3, of the previous section.

\begin{lemma}
Let $\s_0,\t_0,m,A_0>0$ and assume H1, H2 and H3. Let $u$ be a function such that:
\begin{enumerate}
\item[(i)] $ \frac{-1}{2}\leq u\leq \frac{1}{2}$ in $\R^n$,
\item[(ii)] $\cM^+_{\tilde{\cL}_0}u \geq -1$ and $\cM^-_{\tilde{\cL}_0}u \leq 1$ in $B_\k$,
\end{enumerate}
in the viscosity sense. Let $\tilde u(x) = u(\k x)$ for
\[
\k = \frac{\e_1}{(1 + \|u\|_{\8})^{1/(\s-\t)}}.
\]
Then there are universals $\alpha,C>0$ such that
\[
|\tilde u(x)-\tilde u(0)|\leq C|x|^\alpha.
\]
\end{lemma}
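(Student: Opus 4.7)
The strategy is the classical iteration of oscillation decay at geometric scales, with the point estimate (Corollary \ref{leR}) supplying the improvement at each scale. Fix universal constants $\r\in(0,1)$ and $\a\in(0,\t_0)$, to be tuned at the end. I will construct a nondecreasing sequence $m_k$ and a nonincreasing sequence $M_k$ such that
\[
M_k-m_k=\r^{\a k},\qquad m_k\leq \tilde u\leq M_k \text{ on } B_{\r^k},\qquad k\geq 0.
\]
Hypothesis (i) handles $k=0$ with $m_0=-1/2$, $M_0=1/2$. Once such sequences exist, comparing $\tilde u(x)$ with $\tilde u(0)$ through the nested intervals gives $|\tilde u(x)-\tilde u(0)|\leq \r^{\a k}\leq C|x|^\a$ for $|x|\leq \r^k$, which is the stated bound.

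\textbf{Rescaling and tail control.} Given $m_j,M_j$ for $j\leq k$, set
\[
v(y)=\frac{2\bigl(\tilde u(\r^k y)-(m_k+M_k)/2\bigr)}{\r^{\a k}},\qquad y\in\R^n.
\]
Then $|v|\leq 1$ on $B_1$, while the nested induction hypothesis forces the growth $|v(y)|\leq 2\r^{-\a j}$ on $B_{\r^{-j}}\sm B_{\r^{-j+1}}$ for $1\leq j\leq k$, hence $|v(y)|\leq C|y|^\a$ for all $|y|\geq 1$ (with $C$ universal). By the scaling rules of Section \ref{VSP}, $v$ is a viscosity solution of $\cM^+_{\tilde{\cL}_0}v\geq -\r^{k(\s-\a)}$, $\cM^-_{\tilde{\cL}_0}v\leq \r^{k(\s-\a)}$ on a ball that contains $B_1$, the parameters of $\tilde{\cL}_0$ being rescaled; H3 is preserved (in fact improved), since the rescaled ratio $b(1-\t)/(\l(2-\s))$ picks up a factor $\r^{k(\s-\t)}\leq 1$. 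Replacing $v$ by $-v$ if necessary, I may assume $|\3 v\leq 0\4\cap B_1|\geq \tfrac12|B_1|$.

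\textbf{Inductive closing via the point estimate.} Let $w=1-v$, so that $w\geq 0$ on $B_1$ and $|\3 w\geq 1\4\cap B_1|\geq \tfrac12|B_1|$. Claim: there is a universal $\theta>0$ with $w\geq 2\theta$ on $B_\r$. Given this, set $M_{k+1}=M_k-\theta\r^{\a k}$, $m_{k+1}=m_k$, so that $M_{k+1}-m_{k+1}=(1-\theta)\r^{\a k}$, and choose $\a$ universally with $\r^\a\geq 1-\theta$ to close the induction. To prove the claim, Corollary \ref{leR} needs a globally non-negative, bounded function, so I truncate: take $\bar w=\bigl(1-\max(v,-1)\bigr)\wedge 2\geq 0$, which equals $w$ on $B_1$. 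Using the explicit expression for $\cM^-_{\tilde{\cL}_0}$, the growth $|v(y)|\leq C|y|^\a$, the integrability of $|y|^{\a-n-\s}$ and $|y|^{\a-n-\t}$ at infinity (both hold since $\a<\t\leq \s$), and H3 to absorb the constant $b(1-\t)$, one verifies in viscosity sense that
\[
\cM^-_{\tilde{\cL}_0}\bar w\leq C_\star \text{ in } B_{1/2},
\]
with $C_\star$ universal. For any $x_0\in B_\r$, apply Corollary \ref{leR} to $\bar w$ centered at $x_0$ at a scale $r\sim 1$: this gives $|\3\bar w\geq 1\4\cap B_1|\leq C(\bar w(x_0)+C_\star)^\e$. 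If $\bar w(x_0)\leq \theta$ with $\theta$ universally small enough that the right-hand side is strictly less than $|B_1|/2$, we contradict the measure lower bound. Hence $\bar w(x_0)=w(x_0)\geq \theta$, proving the claim.

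\textbf{Main obstacle.} The one delicate technical step is the truncation: establishing that $\bar w$ is a viscosity supersolution with a universally bounded right-hand side. This is where $\a<\t$ (for tail integrability of the $\t$-derivative term), H3 (to absorb the $b(1-\t)$ factor uniformly), and the gap $\s-\t\geq m$ from H2 (to ensure that the rescaling shrinks, rather than amplifies, the lower-order term at every stage) all enter. Once this step is in place, the rest of the argument is the standard iteration of the point estimate and a careful bookkeeping of the universal constants.
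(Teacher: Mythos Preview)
Your strategy is the same oscillation--decay iteration the paper uses, and the overall architecture is correct. There is, however, a genuine gap in the ``inductive closing'' step, and a secondary scale--matching issue.

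\textbf{The smallness of $C_\star$.} Your contradiction argument needs $C(\theta+C_\star)^\e<|B_1|/2$ for some choice of $\theta>0$; this forces $C_\star$ to lie below the \emph{specific} universal threshold $(|B_1|/(2C))^{1/\e}$, not merely to be ``universal''. As you have written it, you bound the tail contribution using only the integrability of $|y|^{\a-n-\s}$, $|y|^{\a-n-\t}$ and H3; that produces a finite constant, but not a small one. The paper handles this point differently: it centers $v$ at $m_k$ (not at the midpoint), so that $v\geq 0$ in $B_1$ and one truncates to $w=v^+$. Then the truncation error is $v^-\leq 2(|4x|^\a-1)$, which \emph{tends to zero as $\a\to0$}; choosing $\a$ small forces the right--hand side below the required threshold $\e_0^m$. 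With your midpoint centering and truncation at $v=-1$, the tail error $(|v|-1)^+$ does \emph{not} vanish as $\a\to0$. Your scheme can still be saved, because the even part of the truncation contributes with a favorable sign (since $\bar w-w\leq0$ vanishes at $x$, one has $\cM^+_\s(\bar w-w)\leq 0$), and the odd part carries the \emph{rescaled} drift coefficient $(\k\r^k)^{\s-\t}b$, which is small once $\e_1$ is small. But you must make this explicit and choose $\e_1$ (and then $\theta$) in that order; invoking H3 alone does not do it.

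\textbf{Scale matching.} You take the measure alternative on $B_1$ but only verify the supersolution inequality for $\bar w$ on $B_{1/2}$; applying Corollary~\ref{leR} at $x_0\in B_\r$ ``at scale $r\sim1$'' then requires the equation on a ball not contained in $B_{1/2}$. The paper avoids this by taking the measure alternative on $B_{1/4}$, establishing the equation on $B_{3/4}$, and applying the point estimate on $B_{1/2}(x)$ for $x\in B_{1/4}$. You should shrink your balls accordingly.

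The paper also records explicitly (via Remark~\ref{alterhyp}) the two conditions that allow the point estimate to be applied without an extra $\k$--dilation: smallness of the rescaled $\tilde b(1+\|w\|_\8)$ and smallness of the right--hand side. Checking the first is where the constraint $\a\leq m$ (from H2) enters in the paper's unbounded truncation; your bounded truncation makes this check easier, which is a genuine simplification---but only after you have secured the smallness of $C_\star$ as above.
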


Our proof relies in noticing that a dilation powerful enough puts us in the same hypothesis as in the proof of \cite{C1}. The detail is that the rescaling considered in such proof consists of a dilation of the domain, which as we already saw are good for our situation, times some constants that grow geometrically which compete against the smallness condition on the coefficient $b$. We want to check that by making $\a$ small enough we can control the effect of this second multiplication.

\begin{proof}
Let $\e_1^m \leq \e_0^m/2$ to start such that the estimates from the previous section are valid with the same constants.
 
We will show that there exists sequences $m_k$ and $M_k$ such that $m_k \leq \tilde u \leq M_k$ in $B_{4^{-k}}$ and 
\[
M_k-m_k=4^{-\a k}
\]
so that result holds for $C=4^\a$.

For $k=0$ we choose $m_0=-1/2$ and $M_0=1/2$ and by (i) we have $m_0 \leq \tilde u \leq M_0$ in $\R^n$. We construct the sequence by induction. Assume then that we have the sequences up to some $k$ and then we want to find $m_{k+1}$ and $M_{k+1}$. In the ball $B_{4^{-(k+1)}}$, either $\tilde u \geq (M_k+m_k)/2$ in at least half the points (in measure) or we have the other inequality. Let's assume, without loss of generality, that
\[
\left|\left\{\tilde u \geq\frac{M_k+m_k}{2}\right\}\cap B_{4^{-(k+1)}}\right|\geq\frac{|B_{4^{-(k+1)}}|}{2}.
\]

Consider now 
\[
v(x)=\frac{\tilde u(4^{-k}x)-m_k}{(M_k-m_k)/2},
\]
so that $v \geq 0$ in $B_1$ and $|\{v \geq 1\}\cap B_{1/4}|\geq|B_{1/4}|/2$.

From the inductive hypothesis, we have that for any index $j$ between 1 and $k$,
\begin{align*}
v\geq\frac{m_{k-j}-m_k}{(M_k-m_k)/2} &\geq \frac{m_{k-j}-M_{k-j}+M_k-m_k}{(M_k-m_k)/2}\\
                                     &\geq 2(1-4^{\a j}),
\end{align*}
in $B_{4^j}$. Therefore $v(x)\geq -2(|4x|^\a -1)$ outside $B_1$. Let $w =v^+$, it satisfies
\begin{align}\label{eqnca}
&\cM^-_{\tilde{\cL}_0(4^{-k(\s-\t)}\k^{\s-\t}b)}w\leq \\
\nonumber &4^{-k\a}\k^\s + \cM_\s^+v^- + 4^{-k(\s-\t)}\k^{\s-\t}b|D_\t|v^-\leq\\
\nonumber &\e_1^m + \cM_\s^+v^- + \e_1^m b|D_\t|v^-.
\end{align}
We still have $|\{w\geq 1\}\cap B_{1/4}|\geq |B_{1/4}|/2$. Use the other bound $v^- \leq 2(|4x|^\a - 1)$ outside $B_1$, also proved by induction, and $v^- = 0$ in $B_1$ to get that the right hand side can be made smaller than $\e_0^m$ in $B_{3/4}$ by choosing a small exponent $\a$.

We recall the conditions in the Remark \ref{alterhyp}. So far we have shown the second one which is satisfied with a right hand side $\e_0^m$. For the first condition note that
\begin{align*}
4^{-k(\s-\t)}\k^{\s-\t}b(1+\|w\|_\8) &\leq 4^{-k(\s-\t)}\e_0^m(1+4^{\a k}),\\
&\leq 4^{-k(m-\a)}\e_0^m.
\end{align*}
so we have to choose $\a \leq m$.

Now, given any $x\in B_{1/4}$ we can apply Corollary \ref{leR} in $B_{1/2}(x)$ to get
\[
C(w(x)+\e_1)^\e\geq|\{w > 1\}\cap B_{1/2}(x)|\geq\frac{|B_{1/4}|}{2},
\]
hence, since $\e_1$ can be made even smaller, we conclude $w \geq \theta>0$ in $B_{1/4}$ for some $\theta>0$. If we let $M_{k+1}=M_k$ and $m_{k+1}=m_k+\theta(M_k-m_k)/2$ we have the inductive step
\[
m_{k+1}\leq \tilde u \leq M_{k+1}, \ \ \hbox{in} \ B_{4^{-(k+1)}}.
\]
Moreover $M_{k+1}-m_{k+1}=(1-\theta/2)4^{-\alpha k}$, so choosing $\alpha$ and $\theta$ such that $1-\theta/2=4^{-\a}$ we conclude $M_{k+1}-m_{k+1}=4^{-\alpha(k+1)}$.
\end{proof}

As a result we get that $\tilde u$ is $C^\a$. Here is the proof of Theorem \ref{ca}.

\begin{proof}
Let $u$ be as in Theorem \ref{ca} and consider now
\[
v(x)=\frac{u(x)}{2(\|u\|_\8+C_0)}.
\]
Since the equation is homogeneous of degree $1$ we are now under the hypothesis of the previous lemma. Hence we conclude that the dilation $\tilde v$ of $v$ satisfies
\[
|\tilde{v}(x)-\tilde{v}(0)|\leq C|x|^\a,
\]
where $C$ is a universal constant. Coming back to $v$, this translates to
\begin{align*}
|v(x)-v(0)|&\leq C|x|^\a\k^{-\a}\\
&=\frac{C}{\e_0^\a}(\|v\|_\8+1)^{\a/(\s-\t)}|x|^\a,	
\end{align*}
since $\|v\|_\8\leq 1/2$, $\s-\t\geq m$ and $\e_0$ is universal we get
\[
|v(x)-v(0)|\leq C|x|^\a, 
\]
for a different universal $C$. In terms of $u$ we recover the estimate
\[
|u(x)-u(0)|\leq C(\|u\|_\8+C_0)|x|^\a.
\]
Hence $u$ is $C^\a$ at $0$ and it's $C^\a$ seminorm is controlled as desired. This concludes the proof.
\end{proof}


\section{$C^{1,\a}$ Regularity}\label{CoA}

For translation invariant equations, $C^{1,\a}$ regularity comes by proving $C^\a$ regularity for the incremental quotients of a given solution. This procedure allows to improve the regularity from $C^\a$ to $C^{2\a}$ and so forth all the way up to $C^{0,1}$ and then to $C^{1,\a}$, see \cite{CC}. We need to use the comparison principle to see that these incremental quotients satisfy a uniformly elliptic equation with bounded measurable coefficients and zero right hand side, for which we already have $C^\a$ estimates. The difficulty in this case is that we need, in each step, these incremental quotients to be uniformly bounded in $\R^n$. The previous regularity only guaranties this on $B_{r-\d}$, given that the equation is satisfied in $B_r$.

Recall the class $\cL_1 = \cL_1(\s,\t,\l,\L,b,\r_0) \ss\tilde{\cL}_0(\s,\t,\l,\L,b)$ of all possible linear operators $L$ with non negative kernels $K$ such that they satisfy \eqref{kernel1} and \eqref{kernel2}, and the following integrability assumption for some radius $\r_0$,
\[
\int_{\R^n \sm B_{\r_0}} \frac{|K(y)-K(y-h)|}{|h|}dy \leq C \ \ \text{every time} \ \ |h| < \frac{\rho_0}{2}.
\]

\begin{theorem}
Let $\s_0,\t_0,m,A_0>0$ and assume that H1, H2 and H3 holds. There is $\r_0>0$ small enough so that if $I$ is an elliptic operator of the inf-sup type as in \eqref{eqinfsup} with all the linear operators in $\cL_1$ and $u$ a bounded viscosity solution of $Iu = 0$ in $B_1$, then there is a universal $\a>0$ such that $u \in C^{1,\a}(B_{1/2})$ and
\[
\|u\|_{C^{1,\a}(B_{1/4})} \leq C\|u\|_\8
\]
for some universal $C > 0$.
\end{theorem}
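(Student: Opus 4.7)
The plan is to implement the Caffarelli--Silvestre bootstrap, iterating Theorem \ref{ca} on incremental quotients of $u$. Since each $L_{\alpha,\beta}\in\cL_1$ is translation invariant, so is $I$; hence for every $h$ with $|h|<\rho_0/2$, $u(\cdot+h)$ solves $Iu(\cdot+h)=0$ in $B_{1-|h|}$, and by Lemma \ref{difofsol} together with $\cL_1\subset\tilde\cL_0$, the difference $v_h := u(\cdot+h)-u$ satisfies
\[
\cM^+_{\tilde\cL_0} v_h \geq 0, \qquad \cM^-_{\tilde\cL_0} v_h \leq 0 \qquad \text{in } B_{1-|h|}.
\]

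The base case $u\in C^\alpha(B_{7/8})$ follows from Corollary \ref{cacor}. Assuming inductively $u\in C^{k\alpha}$ on a nested ball with $k\alpha<1$, one has $|v_h|\leq C|h|^{k\alpha}\|u\|_\infty$ on a slightly smaller ball but only $|v_h|\leq 2\|u\|_\infty$ globally; the goal is to upgrade this to a $C^\alpha$ bound of order $|h|^{k\alpha}\|u\|_\infty$. To this end, I would introduce a smooth cutoff $\eta$, equal to $1$ on the inner ball and vanishing on the complement of an intermediate one, and work with $\tilde v_h := \eta v_h$, which is globally of size $|h|^{k\alpha}\|u\|_\infty$ but satisfies the extremal inequalities only up to a tail error from $v_h-\tilde v_h=(1-\eta)v_h$. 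Provided this tail is controlled by $C|h|\|u\|_\infty$ (see below), Corollary \ref{cacor} applied to $\tilde v_h/|h|^{k\alpha}$ yields $u\in C^{(k+1)\alpha}$. Iterating until the exponent crosses $1$ produces Lipschitz regularity; one additional iteration (now with $v_h/|h|$ uniformly bounded), combined with sending $h\to 0$ along the coordinate directions, gives $\nabla u\in C^\alpha(B_{1/4})$, and a standard covering upgrades $B_{1/4}$ to $B_{1/2}$.

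The main obstacle is bounding the tail error $L(v_h-\tilde v_h)(x)=\int(1-\eta(x+y))v_h(x+y)K(y)\,dy$ by $O(|h|\|u\|_\infty)$, uniformly in $L\in\cL_1$ and $x$ in the inner ball; a naive estimate using only $|v_h|\leq 2\|u\|_\infty$ gives $O(\|u\|_\infty)$ and destroys the induction. To save the factor $|h|$ I would expand $v_h(x+y)=u(x+y+h)-u(x+y)$ and substitute $z=y+h$ in one of the two resulting pieces, rewriting the tail as
\[
\int u(x+z)\bigl[(1-\eta(x+z-h))K(z-h) - (1-\eta(x+z))K(z)\bigr]dz.
\]
Decomposing the bracket into a $K$-difference and an $\eta$-difference term, hypothesis \eqref{L1} controls the former in $L^1(\R^n\setminus B_{\rho_0})$ by $C|h|$, the Lipschitz variation of $\eta$ handles the latter, and the residual small-$z$ contribution ($|z|\leq\rho_0$) is absorbed by a perturbation argument in the style of Sections \ref{ABPp}--\ref{PE}, provided $\rho_0$ is chosen universally small. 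Tracking these errors through the iteration while keeping all constants universal (independent of $\sigma$ and $\tau$) is the technical core of the proof.
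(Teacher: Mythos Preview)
Your approach is essentially the paper's: bootstrap the $C^{\bar\alpha}$ estimate of Theorem \ref{ca} on incremental quotients, using a cutoff and the $\cL_1$ condition \eqref{L1} to control the tail. Two remarks on the execution.

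First, the paper places the cutoff on $u$ rather than on $v_h$: it writes $w_h = w_1^h + w_2^h$ with $w_1^h = \big((\eta u)(\cdot+he)-(\eta u)\big)/|h|^{k\bar\alpha}$ and $w_2^h = \big(((1-\eta)u)(\cdot+he)-((1-\eta)u)\big)/|h|^{k\bar\alpha}$. After the same change of variables you perform, the tail becomes simply
\[
\int (1-\eta)(x+z)\,u(x+z)\,\frac{K(z-h)-K(z)}{|h|}\,|h|^{1-k\bar\alpha}\,dz,
\]
with no $\eta$-difference term. Your choice $\tilde v_h=\eta v_h$ also works, but produces the extra term $[\eta(x+z)-\eta(x+z-h)]K(z-h)$ that you then have to handle separately; the paper's decomposition is cleaner.

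Second, and more importantly, the ``residual small-$z$ contribution'' you flag does not exist, and no perturbation argument from Sections \ref{ABPp}--\ref{PE} is needed. One simply chooses the radii so that for $x$ in the inner ball the factor $(1-\eta)(x+y)$ (or $(1-\eta)(x+y-h)$) vanishes identically for $|y|\leq\rho_0$; the paper takes $\rho_0=\delta/8$ with $\delta=1/(4[1/\bar\alpha])$ and $\eta\equiv 1$ on $B_{(3/4-k\delta)-\delta/2}$. Then the tail integral lives entirely in $\{|y|\geq\rho_0\}$, and \eqref{L1} applies directly. This is the content of the clause ``there is $\rho_0>0$ small enough'' in the statement: $\rho_0$ is fixed by the geometry of the nested balls, not by a smallness/absorption argument.
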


\begin{proof}

Let $\bar\a$ the H\"older exponent obtained by Theorem \ref{ca} and assume that it is not the reciprocal of an integer by making it smaller if necessary. Let $\d = 1/(4[1/\bar\a])$. We want to see that, for $k=0,1,\ldots,[1/\bar\a]-1$, the estimate
\begin{align}\label{hypc1a}
 \|u\|_{C^{0,k\bar\a}(B_{3/4-k\d})} \leq C(k)\|u\|_\8,
\end{align}
implies the next estimate,
\begin{align}\label{claimc1a}
 \|u\|_{C^{0,(k+1)\bar\a}(B_{3/4-(k+1)\d})} \leq C(k+1)\|u\|_\8.
\end{align}

Fix a unit vector $e\in\R^n$ and $\eta$ a smooth cut-off function supported in $B_{(3/4 - k\d) - \d/4}$ and equal to one in $B_{(3/4 - k\d) -\d/2}$. For given $h \in (-\d/8,\d/8)$ we define the following incremental quotients
\begin{align*}
 w_h(x)   &= \frac{u(x+he) - u(x)}{|h|^{\bar\a k}},\\
 w_1^h(x) &= \frac{(\eta u)(x+he) - (\eta u)(x)}{|h|^{\bar\a k}},\\
 w_2^h(x) &= \frac{((1-\eta) u)(x+he) - ((1-\eta) u)(x)}{|h|^{\bar\a k}}.
\end{align*}

When $x \in B_{(3/4 - k\d)-\d/8}$, $|w_1^h(x)|$ is bounded above by $C(k,\eta)\|u\|_\8$. By interpolation and \eqref{hypc1a},
\begin{align*}
|w_1^h(x)| &\leq \|\eta u\|_{C^{0,k\bar\a}(B_{3/4-k\d})},\\
&\leq \|u\|_{C^{0,k\bar\a}(B_{3/4-k\d})} + \|u\|_\8\|\eta\|_{C^{0,k\bar\a}(B_{3/4-k\d})},\\
&\leq C(k,\eta)\|u\|_\8.
\end{align*}
If $x \in \R^n \sm B_{(3/4-k\d)-\d/8}$ then $w_1^h(x)$ just cancels.

By using that the equation is translation invariant we have that $u$ and $u(\cdot+he)$ satisfy equations in the same ellipticity family, \emph{with positive kernels}. Then $w_h$ also satisfy an equation in the same ellipticity family by Lemma \ref{difofsol}. The function $w_1^h$ satisfy a similar equation as $w_h$, the difference is on the right hand side introduced by the cut-off,
\begin{align*}\label{oldeq}
\cM^+_{\cL_1}w_1^h \geq - \cM^+_{\cL_1}w_2^h \ \ \text{and} \ \ \cM^-_{\cL_1}w_1^h \leq - \cM^-_{\cL_1}w_2^h.
\end{align*}
For $x \in B_{(3/4-k\d)- 3\d/4}$ the terms $|\cM^\pm_{\cL_1} w_2^h|$ are controlled by $\|u\|_\8$ by using that
\[
\int_{\R^n \sm B_{\r_0}} \frac{|K(y)-K(y-h)|}{|h|}dy \leq C \text{ every time $|h| < \frac{\rho_0}{2}$}.
\]
with $\r_0 = \d/8$. Indeed, for $L \in \cL_1$ with kernel $K$ and $x \in B_{(3/4-k\d)- 3\d/4}$ and $|y| \leq \d/8$, $w_2^h(x+y) = 0$ and
\begin{align*}
|Lw_2^h(x)| &= \left|\int w_2^h(x+y)K(y)dy\right|,\\
&= \left|\int_{\R^n\sm B_{\d/8}} \frac{(1-\eta)u(x+y+h)-(1-\eta)u(x+y)}{|h|^{\bar\a k}}K(y)dy\right|,\\
&= \left|\int_{\R^n\sm B_{\d/8}} (1-\eta)u(x+y)|h|^{1-\bar\a k}\frac{K(y)-K(y-h)}{|h|}dy\right|,\\
&\leq C\|u\|_\8.
\end{align*}

We get then the equations for $w_h^1$ in $B_{(3/4-k\d)-3\d/4}$
\begin{align*}
\cM^+_{\cL_1}w_1^h \geq C\|u\|_\8 \ \ \text{and} \ \ \cM^-_{\cL_1}w_1^h \leq -C\|u\|_\8.
\end{align*}
By applying Theorem \ref{ca} to $w_1^h$ from $B_{(3/4-k\d)-3\d/4}$ to $B_{3/4-(k+1)\d}$ we conclude that for a constant $C(k+1)$ independent of $h$,
\[
 \|w_h^1\|_{C^{0,\bar\a}(B_{3/4-(k+1)\d})} \leq C(k+1)\|u\|_\8.
\]
This implies the estimate \eqref{claimc1a} by using Lemma 5.6 in \cite{CC}.

From $k=[1/\bar\a]-1$ to $k+1 = [1/\bar\a]$ we get that $u$ is Lipschitz in $B_{3/4}$ with the estimate
\[
 \|u\|_{C^{0,1}(B_{3/4})} \leq C\|u\|_\8.
\]

By applying the previous step one more time to the Lipschitz quotient we conclude the theorem.
\end{proof}

{\bf Acknowledgment.} The authors would like to thank Luis Caffarelli for proposing the problem and for various useful discussions.

\end{document}